\theoremstyle{plain}
\newtheorem{thm}[subsection]{Theorem}
\newtheorem{prop}[subsection]{Proposition}
\newtheorem{lem}[subsection]{Lemma}
\newtheorem{cor}[subsection]{Corollary}
\newtheorem{conj}[subsection]{Conjecture}
\theoremstyle{definition} 
\newtheorem{defn}[subsection]{Definition}
\newtheorem{rem}[subsection]{Remark}
\newtheorem{ex}[subsection]{Example}
\newtheorem{notation}[subsection]{Notation}
\begin{document}

\subjclass[2010]{Primary 13H15; Secondary 14B05}

\keywords{multiplicity, log canonical threshold, invariant ring}

\title[Bounds of the multiplicity of  abelian quotient c.i singularities]{Bounds of the multiplicity of  abelian quotient complete intersection singularities}


\author{Kohsuke Shibata}


\address{Graduate School of Mathematical Sciences, University of Tokyo, 3-8-1, Komaba, Meguro-ku,  Tokyo, 153-8914, Japan.}
\email{shibata@ms.u-tokyo.ac.jp}

\maketitle

\begin{abstract}
In this paper, we investigate  the multiplicities  and the log canonical thresholds of abelian quotient complete intersection singularities in term of the special datum. 
Moreover we give  bounds of the multiplicity of abelian quotient complete intersection singularities.
\end{abstract}

\section{Introduction}

In \cite{W}, Watanabe classified all abelian quotient complete intersection singularities.
Watanabe defined a special datum (see Section 2 for detailed definitions) in order to classify abelian quotient complete intersection singularities.
Using a special datum, he gave an upper bound of the multiplicity of abelian quotient complete intersection singularities.

\begin{thm}\label{watanabe conjecture AQ}$\mathrm(${\rm Proposition 3.1  in \cite{W}}$\mathrm)$
Let  $G$ be a finite abelian subgroup of $\mathrm{SL}(n,\mathbb C)$. 
If $R=\mathbb C[x_1,\dots,x_n]^G$ is a complete intersection, then 
$$  e(R_{\mathfrak m^G})\le 2^{n-1},$$
 where $\mathfrak m^G=(x_1,\dots, x_n)\cap R$ and $e(R_{\mathfrak m^G})$ is the Hilbert-Samuel multiplicity of the local ring $R_{\mathfrak m^G}$.
\end{thm}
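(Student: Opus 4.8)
The plan is to first pass to a diagonal action and reinterpret the multiplicity geometrically, then exploit the structure encoded in the special datum, splitting into a ``reducible'' and an ``irreducible'' case.

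\textbf{Step 1 (reduction to semigroup rings and a multiplicity formula).} Since $G$ is abelian, after a linear change of coordinates we may assume the action on $\mathbb C^n$ is diagonal, so that $R=\mathbb C[x_1,\dots,x_n]^G$ is the affine semigroup ring $\mathbb C[S]$ with $S=\mathbb Z_{\ge0}^n\cap L$, where $L=\ker(\mathbb Z^n\to\widehat G)$ is a sublattice of index $|G|$ (the action is faithful), and $\mathfrak m^G$ is generated by the monomials $x^a$ with $0\ne a\in S$. The extension $R\subset P:=\mathbb C[x_1,\dots,x_n]$ is finite of generic rank $|G|$, and $\mathfrak m^GP$ is a monomial ideal primary to $(x_1,\dots,x_n)$ (a power of each $x_i$ lies in $S$). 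Applying the associativity formula for Hilbert--Samuel multiplicities to the $R$-module $P$ localized at $(x_1,\dots,x_n)$ and the classical volume formula for the multiplicity of a monomial ideal, one gets
$$e(R_{\mathfrak m^G})=\frac1{|G|}\,e\bigl(\mathfrak m^GP;\,P_{(x_1,\dots,x_n)}\bigr)=\frac{n!}{|G|}\,\mathrm{vol}\bigl(\mathbb R_{\ge0}^n\setminus NP(\mathfrak m^G)\bigr),$$
so it suffices to bound this volume. Equivalently, writing $R=\mathbb C[[y_1,\dots,y_{n+c}]]/(f_1,\dots,f_c)$ as a complete intersection with $(f_1,\dots,f_c)$ a minimal relation ideal, one has $e(R_{\mathfrak m^G})\le\prod_{i=1}^{c}\mathrm{ord}(f_i)$, and it suffices to bound this product; note that neither reformulation yet uses that $R$ is a complete intersection in an essential way beyond this last inequality.

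\textbf{Step 2 (the reducible case).} The special datum records how $V=\mathbb C^n$ decomposes. If $G$ preserves a nontrivial $G$-stable splitting $V=V_1\oplus V_2$ with $\dim V_i=n_i$, then $G=G_1\times G_2$ with $G_i$ the induced group on $V_i$, and $R=R_1\otimes_{\mathbb C}R_2$ with $R_i=\mathbb C[V_i]^{G_i}$, again complete intersections. Since $\mathrm{gr}_{\mathfrak m^G}(R)\cong\mathrm{gr}(R_1)\otimes_{\mathbb C}\mathrm{gr}(R_2)$, multiplicity is multiplicative here: $e(R_{\mathfrak m^G})=e((R_1)_{\mathfrak m^{G_1}})\cdot e((R_2)_{\mathfrak m^{G_2}})$. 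By induction on $n$ this is at most $2^{n_1-1}\cdot2^{n_2-1}=2^{n-2}\le2^{n-1}$.

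\textbf{Step 3 (the irreducible case: the main obstacle).} It remains to treat the groups whose special datum is indecomposable. Here the strategy is to read off from the special datum an explicit normal form for a minimal generating set of the semigroup $S$ together with the (binomial) relations among the generators, and then either verify $\prod_i\mathrm{ord}(f_i)\le2^{n-1}$ for this normal form, or compute $\mathrm{vol}(\mathbb R_{\ge0}^n\setminus NP(\mathfrak m^G))$ and compare it with $|G|/n!$. This is the heart of the matter and the step I expect to be genuinely delicate: the indecomposable complete intersection abelian quotients are already quite varied, and the bound $2^{n-1}$ is sharp for a whole family of them (for $n=2$ it is attained by the $A_{r-1}$-singularity, and there are higher-dimensional analogues attaining equality), so a crude estimate cannot work. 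One will need the precise combinatorics of the special datum, most plausibly via a further induction in which a single coordinate that enters the datum in a controlled way is removed, tracking simultaneously how $n$, how $|G|$, and how the relevant volume (equivalently, the multiset of relation orders) change. I expect the bookkeeping in this final step to be the only real difficulty; Steps 1 and 2 are essentially formal.
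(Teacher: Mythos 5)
There is a genuine gap: your Step 3 is not a proof but an acknowledgement that the main case remains to be done, and that case is the entire content of the theorem. In fact the ``connected'' (indecomposable) case does not require a separate delicate analysis of normal forms or Newton polyhedra: the point of Watanabe's special datum $\mathbb D=(D,w)$ is that if $J$ is the maximal element of a connected datum and $J_1,\dots,J_m$ are the elements with $J_i\sqsubset J$, then $R_{\mathbb D}\cong R_{\mathbb D\setminus J}[Y]/(Y^{w(J_1)}-x_{J_1}\cdots x_{J_m})$ (Lemma \ref{reduce lemma}), so $e(R_{\mathbb D})\le \mathrm{ord}(Y^{w(J_1)}-x_{J_1}\cdots x_{J_m})\cdot e(R_{\mathbb D\setminus J})\le \delta(J)\,e(R_{\mathbb D\setminus J})$ with $\delta(J)=m$ the number of branches below the top node. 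The datum $\mathbb D\setminus J$ is disconnected, and iterating (your Step 2, done correctly, alternates with this reduction) yields $e(R_{\mathbb D})\le m(D)=\prod_{|J|\ge 2}\delta(J)$; the elementary identity $\sum_{|J|\ge 2}(\delta(J)-1)=n-\#\{\text{maximal elements}\}\le n-1$ together with $\delta\le 2^{\delta-1}$ gives $m(D)\le 2^{n-1}$. So the ``bookkeeping'' you defer is a short induction on the tree, but it genuinely requires the classification (Theorem \ref{Watanabe main theorem}) as input, which your outline never invokes.

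Separately, your Step 2 is false as stated. A $G$-stable splitting $V=V_1\oplus V_2$ does \emph{not} give $G=G_1\times G_2$ or $R=R_1\otimes_{\mathbb C}R_2$: for $G=\langle\mathrm{diag}(\zeta_r,\zeta_r^{-1})\rangle\subset\mathrm{SL}(2,\mathbb C)$ the coordinate lines are $G$-stable, each induced group $G_i$ is cyclic of order $r$, yet $\mathbb C[x,y]^G=\mathbb C[x^r,y^r,xy]\ne\mathbb C[x^r]\otimes\mathbb C[y^r]$. Even when $G$ does split as a product, the factors $G_i$ lie only in $\mathrm{GL}(V_i)$, not $\mathrm{SL}(V_i)$, so your induction hypothesis does not apply to them. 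The decomposition that actually works is the one by connected components of the special datum (Proposition \ref{non-connected}), for which $R_{\mathbb D}=\bigotimes_i R_{\mathbb D_{J_i}}$ and each $G_{\mathbb D_{J_i}}$ is again a special-linear group on its factor; again this rests on the classification. Your Step 1 reformulations (the $\frac{n!}{|G|}\mathrm{vol}$ formula and $e\le\prod\mathrm{ord}(f_i)$) are correct but are not what carries the argument here.
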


The following conjecture was posed by Watanabe as a generalization of Theorem \ref{watanabe conjecture AQ}.
\begin{conj}\label{watanabe conjecture}
Let $X$ be an n-dimensional variety of locally a complete intersection with  canonical singularities. Then
$$ e(\mathcal O_{X,x})\le 2^{n-1}$$
for a closed point $x$ of $X$.
\end{conj}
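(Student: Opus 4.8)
The plan is to reduce Conjecture~\ref{watanabe conjecture} to the \emph{log canonical} case by a general hyperplane section, and then to invoke the bound $e(\mathcal{O}_{X',x}) \le 2^{\dim X'}$ valid for log canonical lci singularities (a theorem of de Fernex, Ein and Mustata). Concretely: fix a local embedding $X \hookrightarrow \mathbb{A}^N$ around $x$, let $H$ be a general hyperplane through $x$, and set $X' := X \cap H$. Since $X$ is lci, $\mathcal{O}_{X,x}$ is Cohen--Macaulay, so a general linear form $\ell \in \mathfrak m_x$ is a nonzerodivisor on $\mathcal{O}_{X,x}$; appending $\ell$ to a regular sequence cutting out $X$ shows $X'$ is again lci, now of dimension $n-1$. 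The same $\ell$ is a superficial element for $\mathfrak m_x$, so $e(\mathcal{O}_{X,x}) = e(\mathcal{O}_{X',x})$. If, in addition, $X'$ is log canonical at $x$, then applying the lci log canonical bound to $X'$ gives $e(\mathcal{O}_{X,x}) = e(\mathcal{O}_{X',x}) \le 2^{(n-1)} = 2^{n-1}$. Note that one can afford to weaken ``canonical'' to ``log canonical'' exactly once, which is why the target is $2^{n-1}$ and not smaller.

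Should one want the log canonical bound to be self-contained rather than cited, I would argue as follows: write $X' = V(f_1,\dots,f_c) \subseteq \mathbb{A}^{(n-1)+c}$, use the inversion-of-adjunction criterion (Ein--Mustata) that an lci subvariety of codimension $c$ is log canonical iff $\mathrm{lct}_x(\mathbb{A}^N,\mathfrak a) \ge c$ for $\mathfrak a = (f_1,\dots,f_c)$, and convert this lower bound on the log canonical threshold into an upper bound on $e(\mathcal{O}_{X',x})$ through the multiplicity--log-canonical-threshold inequalities: blowing up $x$ gives $\mathrm{lct}_x(\mathbb{A}^N,\mathfrak a) \le N/\mathrm{ord}_x(\mathfrak a)$, and finer estimates are then needed to control the multiplicity of the whole complete intersection rather than just $\mathrm{ord}_x$ of a single $f_i$. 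As a sanity check, the bound is sharp: the affine cone over a smooth complete intersection of $n-1$ quadrics in $\mathbb{P}^{2n-2}$ is an lci singularity of dimension $n$ which is canonical --- it is the cone over a Fano variety of index one, hence canonical but not terminal --- and has $e(\mathcal{O}_{X,x}) = 2^{n-1}$. Any correct proof must be compatible with this example, which suggests that equality should force a purely quadratic, conical local model.

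The main obstacle, I expect, is precisely the claim that the general hyperplane section through the \emph{fixed} point $x$ stays log canonical \emph{at} $x$. The linear system of hyperplanes through $x$ has $x$ as a base point, so Bertini only yields log-canonicity of $X'$ on $X' \setminus \{x\}$; at $x$ a genuine adjunction argument is required, and it must genuinely use ``canonical'' rather than merely ``log canonical'' --- for an lc but non-canonical singularity the statement fails, e.g.\ a general hyperplane section through the vertex of the cone over a plane elliptic curve is three concurrent lines in a plane, which is not log canonical. A secondary difficulty, if one instead works directly on $X$ through jet schemes or minimal log discrepancies without passing to a section, is to squeeze a clean power-of-two bound for the multiplicity of a higher-codimension complete intersection out of the canonicity condition, since the elementary estimate above only controls $\mathrm{ord}_x$ of one defining equation. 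I would begin by testing the hyperplane-section step on compound Du Val threefolds and on Watanabe's classification of abelian quotient complete intersections, where Theorem~\ref{watanabe conjecture AQ} already supplies the bound and the toric combinatorics may point to the correct adjunction statement.
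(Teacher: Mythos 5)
First, be aware that the statement you are proving is stated in the paper as Conjecture \ref{watanabe conjecture}: the paper does not prove it. It only establishes the special case of abelian quotient complete intersection singularities (Theorem \ref{Main Theorem A}), by reducing to Watanabe's classification and inducting on special data, and it cites \cite{Shibata} for the case $n\le 32$. Measured as a proof of the general conjecture, your argument has a fatal gap: the input you attribute to de Fernex, Ein and Mustata, namely the bound $e(\mathcal O_{X',x})\le 2^{\dim X'}$ for an lci variety $X'$ with log canonical singularities, is not an existing theorem. It is essentially Conjecture \ref{my conjecture} with $\lceil\mathrm{lct}(\mathfrak m_x)\rceil$ replaced by $0$, i.e.\ a statement of the same nature and difficulty as the one you are trying to prove. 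What de Fernex, Ein and Mustata actually prove is $\mathrm{lct}(\mathfrak a)\le N\, e(\mathfrak a)^{-1/N}$ for an $\mathfrak m$-primary ideal in an $N$-dimensional regular local ring; run through a general linear section of complementary dimension, this gives multiplicity bounds for lci log canonical singularities that grow much faster than $2^{\dim X}$ as the codimension grows, which is precisely why \cite{Shibata} only reaches $n\le 32$. Your reduction is therefore circular: you trade the conjecture for an unproven statement that implies it.

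The secondary gap you flag yourself: that a general hyperplane section through the \emph{fixed} point $x$ of a canonical lci singularity remains log canonical \emph{at} $x$. This amounts to $\mathrm{lct}(X,\mathfrak m_x)\ge 1$ together with inversion of adjunction; it is plausible for canonical lci singularities, but you give no argument, and your elliptic-cone example shows that the hypothesis ``canonical'' enters in an essential, not-yet-justified way. The steps that are sound are the routine ones: $X'=X\cap H$ is again lci because $\mathcal O_{X,x}$ is Cohen--Macaulay and a general linear form is a nonzerodivisor, and $e(\mathcal O_{X,x})=e(\mathcal O_{X',x})$ because that form is superficial; your sharpness example (the cone over a complete intersection of $n-1$ quadrics in $\mathbb P^{2n-2}$) is consistent with the paper's equality criterion $\mathrm{emb}(R_{\mathbb D})=2n-1$ in Corollary \ref{emb 2n-1}. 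If your goal is the case the paper actually settles, the working route is entirely different: conjugate $G$ into the form $G_{\mathbb D}$ via Theorem \ref{Watanabe main theorem} and induct on the special datum, peeling off the maximal element $J$ and comparing $e(R_{\mathbb D})$ with $w(J_1)\,e(R_{\mathbb D\setminus J})$ as in Proposition \ref{lct=lct}.
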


In \cite{Shibata}, this conjecture was refined as follows:
\begin{conj}\label{my conjecture}
Let $X$ be an n-dimensional variety of locally a complete intersection with log canonical singularities. Then
$$ e(\mathcal O_{X,x})\le 2^{n-\lceil\mathrm{lct}(\mathfrak m_{x})\rceil}$$
for a closed point $x$ of $X$ and the equality holds if and only if $\mathrm{emb}(X,x)=2n-\lceil\mathrm{lct}(\mathfrak m_{x})\rceil$, where $\mathrm{emb}(X,x)$ is the embedding dimension of $X$ at $x$ and $\mathrm{lct}(\mathfrak m_{x})$ is the log canonical threshold of $\mathfrak m_{x}$.
\end{conj}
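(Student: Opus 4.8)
\emph{Proof strategy.} The plan is to factor the desired bound through the orders of a minimal set of defining equations. First I would pass to a minimal embedding, so that $X\subseteq \mathbb A^{N}$ with $N=\mathrm{emb}(X,x)=n+c$, where $c$ is the embedding codimension and $x$ is the origin; minimality forces the ideal of $X$ into $\mathfrak m_0^2$, so any minimal generating regular sequence $f_1,\dots,f_c$ has $d_i:=\mathrm{ord}_x(f_i)\ge 2$. I would then assemble the bound from three inequalities:
\[
 e(\mathcal O_{X,x})\ \le\ \prod_{i=1}^{c} d_i\ \le\ \prod_{i=1}^{c}2^{\,d_i-1}\ =\ 2^{\sum_i (d_i-1)}\ \le\ 2^{\,n-\lceil \mathrm{lct}(\mathfrak m_x)\rceil}.
\]
The first inequality is the classical multiplicity bound $e(\mathcal O_{X,x})\le \prod_i d_i$ for a complete intersection, which reduces to B\'ezout's theorem for the tangent cone $C_xX=\mathrm{Spec}\,\mathrm{gr}_{\mathfrak m_x}\mathcal O_{X,x}$ when the initial forms $\mathrm{in}(f_1),\dots,\mathrm{in}(f_c)$ form a regular sequence, and holds in general for a height-$c$ complete intersection. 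The middle step is the elementary inequality $d\le 2^{d-1}$, with equality exactly when $d\in\{1,2\}$. Everything therefore reduces to the third, geometric inequality, which I isolate as the key lemma
\[
 \lceil \mathrm{lct}(\mathfrak m_x)\rceil\ \le\ n-\sum_{i=1}^{c}(d_i-1).
\]

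For the key lemma I would use the blow-up of the point together with adjunction. Let $\pi\colon \mathrm{Bl}_x X\to X$ be the blow-up of $x$, which coincides with the blow-up of $\mathfrak m_x$, so its exceptional divisor $E_X=\mathbb P(C_xX)$ satisfies $\mathrm{ord}_{E_X}(\mathfrak m_x)=1$. Working inside $W=\mathrm{Bl}_x\mathbb A^{N}$, whose exceptional divisor $E_0$ has $a(E_0;\mathbb A^N)=N-1$, the strict transforms $\tilde f_i$ (obtained from $\pi^*f_i$ by dividing out $d_i$ copies of a local equation of $E_0$) cut out the strict transform $\tilde X$, and adjunction for the regular embedding $\tilde X\subseteq W$ gives
\[
 \omega_{\tilde X}\ \cong\ \pi^*\omega_X\otimes \mathcal O_{\tilde X}\big((N-1-{\textstyle\sum_i d_i})\,E_X\big),
\]
whence $a(E_X;X)=N-1-\sum_i d_i=n+c-1-\sum_i d_i$. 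Therefore
\[
 \mathrm{lct}(\mathfrak m_x)\ \le\ \frac{a(E_X;X)+1}{\mathrm{ord}_{E_X}(\mathfrak m_x)}\ =\ n-\sum_{i=1}^{c}(d_i-1),
\]
and since the right-hand side is an integer the same bound holds for $\lceil\mathrm{lct}(\mathfrak m_x)\rceil$. Chaining the three inequalities yields $e(\mathcal O_{X,x})\le 2^{\,n-\lceil\mathrm{lct}(\mathfrak m_x)\rceil}$.

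For the equality statement, the ``only if'' direction is immediate from the chain: if $e(\mathcal O_{X,x})=2^{\,n-\lceil\mathrm{lct}(\mathfrak m_x)\rceil}$ then all three inequalities are equalities, forcing every $d_i=2$ together with $\sum_i(d_i-1)=n-\lceil\mathrm{lct}(\mathfrak m_x)\rceil$; as $\sum_i(d_i-1)=c$ this gives $c=n-\lceil\mathrm{lct}(\mathfrak m_x)\rceil$, i.e. $\mathrm{emb}(X,x)=n+c=2n-\lceil\mathrm{lct}(\mathfrak m_x)\rceil$. For the ``if'' direction, the hypothesis $\mathrm{emb}(X,x)=2n-\lceil\mathrm{lct}(\mathfrak m_x)\rceil$ reads $c=n-\lceil\mathrm{lct}(\mathfrak m_x)\rceil$, and combined with $c\le \sum_i(d_i-1)\le n-\lceil\mathrm{lct}(\mathfrak m_x)\rceil$ it forces every $d_i=2$ and $\prod_i d_i=2^{c}=2^{\,n-\lceil\mathrm{lct}(\mathfrak m_x)\rceil}$; it then remains to upgrade $e(\mathcal O_{X,x})\le \prod_i d_i$ to an equality, i.e. to show that in this extremal situation the tangent cone is a reduced complete intersection of quadrics.

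The main obstacle is justifying the adjunction computation in full generality, namely when the initial forms $\mathrm{in}(f_i)$ fail to form a regular sequence. In that case $\tilde X$ need not be normal along $E_X$, the divisor $E_X=\mathbb P(C_xX)$ may be non-reduced or reducible, and $V(\tilde f_1,\dots,\tilde f_c)$ may acquire spurious components, so the clean formula for $a(E_X;X)$ must be replaced by a discrepancy estimate on a resolution dominating $\mathrm{Bl}_x X$. I would address this through the inversion of adjunction for locally complete intersection varieties (Ein--Musta\c{t}\u{a}), which computes log discrepancies of $X$ from the ambient datum $(\mathbb A^N, I_X)$ without resolving $X$, and I expect that precisely when the tangent cone degenerates one has strict inequality $e(\mathcal O_{X,x})<\prod_i d_i$, supplying the slack needed to absorb any loss in the discrepancy estimate. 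The same circle of ideas---controlling the tangent cone of an extremal log canonical complete intersection---is what is required to close the ``if'' direction, and I expect that establishing that the extremal condition forces a reduced complete intersection of quadrics is the technical heart of the whole argument. The feasibility of this program is corroborated by the abelian quotient case, where the special datum renders all of these quantities explicit, the bound being known by \cite{W} and refined in \cite{Shibata}.
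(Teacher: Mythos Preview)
The statement you are attempting to prove is Conjecture~1.3, which the paper does \emph{not} prove in general; the paper only establishes the special case of abelian quotient complete intersection singularities (Theorem~\ref{Main Theorem A}, proved as Theorem~\ref{Main theorem}), and does so by an entirely different, combinatorial route via Watanabe's special data and induction on their structure. The general conjecture is open beyond $n\le 32$ (see \cite{Shibata}), so a short elementary proof should be regarded with suspicion.

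The concrete gap in your chain is the very first link,
\[
 e(\mathcal O_{X,x})\ \le\ \prod_{i=1}^{c} d_i,
\]
which you describe as ``the classical multiplicity bound \dots\ for a complete intersection'' that ``holds in general''. It does not. When the initial forms $\mathrm{in}(f_i)$ form a regular sequence one has \emph{equality} $e=\prod d_i$, but when they do not, the multiplicity can exceed the product. For instance, in $\mathbb A^3_{x,y,z}$ the regular sequence $f_1=xy$, $f_2=x^2+y^3$ defines a one--dimensional complete intersection with $d_1=d_2=2$, yet $e(\mathcal O_{X,0})=\ell\bigl(k[x,y]/(xy,x^2+y^3)\bigr)=5>4$. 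No other minimal regular sequence for this ideal has larger orders, so the inequality genuinely fails. Your later expectation that ``when the tangent cone degenerates one has strict inequality $e(\mathcal O_{X,x})<\prod_i d_i$'' is therefore backwards: degeneration of the tangent cone can make $e$ \emph{larger} than $\prod d_i$, not smaller, so there is no slack to absorb.

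You may object that the example above is non-reduced and hence not log canonical. That is true, but your argument never invokes the log canonical hypothesis in deriving $e\le\prod d_i$; you only use it afterwards, in the discrepancy step. To rescue the strategy you would need a theorem of the form ``for a log canonical local complete intersection one can choose a minimal regular sequence with $e\le\prod d_i$'', and no such statement is known. Indeed, the paper's Remark after Proposition~\ref{embedding dimension log canonical threshold1} records that only the weaker consequence $\mathrm{emb}\le 2n-\lceil\mathrm{lct}\rceil$ (equivalently $c\le n-\lceil\mathrm{lct}\rceil$) is established in general in \cite{Shibata}; your stronger inequality $\sum_i(d_i-1)\le n-\lceil\mathrm{lct}\rceil$ combined with $e\le\prod d_i$ would immediately settle the full conjecture, which remains open. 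The same issue undermines the ``if'' direction of your equality analysis, where you need $e=\prod d_i=2^c$ once all $d_i=2$, i.e.\ precisely that the tangent cone is a complete intersection of quadrics; you correctly flag this as ``the technical heart'', but it is not a loose end---it is the whole problem.
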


In \cite{Shibata}, the author gave  upper bounds of the multiplicity by functions of the log canonical threshold for  locally a complete intersection singularity.
As an application, we obtained the affirmative answer to the conjecture if the dimension of a variety is less than or equal to $32$ and the variety has canonical singularities.
\begin{thm}$\mathrm(${\rm Theorem 5.6  in \cite{Shibata}}$\mathrm)$
Let $X$ be an $n$-dimensional variety of locally a complete intersection with canonical singularities.
If $n\le 32$, then
Conjecture \ref{my conjecture} holds.
In particular, if $n\le 32$, then
Conjecture \ref{watanabe conjecture} holds.
\end{thm}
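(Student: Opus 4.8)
I sketch how I would deduce Theorem 5.6 from the bounds of \cite{Shibata}.

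\medskip

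The first input is qualitative: a variety with \emph{canonical} singularities satisfies $\mathrm{lct}(\mathfrak m_x)\ge 1$. Indeed, a general member of the linear system $|\mathfrak m_x|$ — a general hyperplane section through $x$ — of a variety with canonical singularities again has canonical singularities, so the pair $(X,\mathfrak m_x)$ is log canonical, i.e.\ $\mathrm{lct}(\mathfrak m_x)\ge 1$. Set $k:=\lceil\mathrm{lct}(\mathfrak m_x)\rceil$; then $1\le k\le n$ and $\mathrm{lct}(\mathfrak m_x)>k-1$, and the inequality of Conjecture \ref{my conjecture} to be proved becomes $e(\mathcal O_{X,x})\le 2^{\,n-k}$, while the equality clause asks that this hold with equality exactly when $\mathrm{emb}(X,x)=2n-k$.

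\medskip

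Next I would feed $X$ into the upper bounds proved in the earlier sections of \cite{Shibata}, which control $e(\mathcal O_{X,x})$ by an explicit function of $n$, the embedding dimension $\mathrm{emb}(X,x)$, and $\mathrm{lct}(\mathfrak m_x)$. Since $X$ is locally a complete intersection, write $\mathrm{emb}(X,x)=n+c$ with $c$ the codimension, so that this becomes a function $B\bigl(n,c,\mathrm{lct}(\mathfrak m_x)\bigr)$. Maximizing $B$ over the admissible range — $c\ge 0$ and $\mathrm{lct}(\mathfrak m_x)\in(k-1,k]$, subject to the constraints linking these quantities to $n$ — yields a bound depending on $n$ and $k$ alone. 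The point is then to verify that this maximum is $\le 2^{\,n-k}$ for every $k$ and every $n\le 32$; by the monotonicity of $B$ in its arguments it is enough to test the finitely many boundary configurations (e.g.\ $\mathrm{lct}(\mathfrak m_x)$ just above each integer $k-1$), so for each $n\in\{1,\dots,32\}$ this is a finite — if delicate — computation.

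\medskip

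I expect the genuine content, and the reason the statement stops at $n\le 32$, to sit precisely in this last verification: the function $B$ stays close to $2^{\,n-1}$, with only a small surplus for small $n$ that is exactly exhausted by $n=32$ and fails at $n=33$. Thus the real work splits into (i) ensuring the bound $B$ imported from \cite{Shibata} is sharp enough to leave any room at all, and (ii) solving the extremal comparison $B(n,c,\ell)\le 2^{\,n-k}$. Step (ii) is a combinatorial optimization rather than a soft argument, and the margin is tight near $n=32$; this is where I anticipate the main obstacle.

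\medskip

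For the equality assertion I would trace equality back through $e(\mathcal O_{X,x})\le B\bigl(n,c,\mathrm{lct}(\mathfrak m_x)\bigr)\le 2^{\,n-k}$: equality on the right forces $c$ and $\mathrm{lct}(\mathfrak m_x)$ into the unique extremal configuration found in step (ii), which together with the equality analysis already carried out for the bounds in \cite{Shibata} pins down $c=n-k$, i.e.\ $\mathrm{emb}(X,x)=2n-k$; conversely, a codimension-$(n-k)$ locally complete intersection with $\mathrm{lct}(\mathfrak m_x)>k-1$ is rigid enough that its multiplicity is forced to equal $2^{\,n-k}$. Finally, the passage to Conjecture \ref{watanabe conjecture} is immediate, since $k\ge 1$ gives $2^{\,n-k}\le 2^{\,n-1}$.
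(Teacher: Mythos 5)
This statement is not proved in the paper at all: it is quoted verbatim from \cite{Shibata} (Theorem 5.6 there) as background motivation, so there is no internal proof to compare your attempt against. Judged on its own terms, your proposal correctly identifies the intended architecture of the argument in \cite{Shibata} --- namely (a) canonical singularities force $\mathrm{lct}(\mathfrak m_x)\ge 1$, (b) the earlier sections of that paper give an explicit upper bound for $e(\mathcal O_{X,x})$ in terms of $n$, the embedding codimension and $\mathrm{lct}(\mathfrak m_x)$, and (c) one compares that bound with $2^{\,n-\lceil \mathrm{lct}(\mathfrak m_x)\rceil}$. But as written it is a plan rather than a proof, and the gap sits exactly where you yourself locate ``the genuine content'': you never write down the bound $B$, never perform the extremal comparison, and assert only that you ``expect'' the threshold to be $n=32$. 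Since the entire point of the theorem is that this comparison works out up to $32$ and for no trivial reason, deferring it leaves the proof empty; in particular nothing in your text explains why $32$ rather than $2$ or $200$ is the cutoff, and the monotonicity claims you invoke to reduce to ``finitely many boundary configurations'' are not justified for an unspecified function $B$.

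Two further points need repair. First, your justification of $\mathrm{lct}(\mathfrak m_x)\ge 1$ is not sound as stated: $|\mathfrak m_x|$ is not base-point free at $x$, so Bertini-type statements about general members do not directly give log canonicity of the pair $(X,\mathfrak m_x)$; in \cite{Shibata} this inequality is established for locally complete intersection canonical singularities by a genuine argument (via inversion of adjunction / the jet-theoretic description of discrepancies), not by cutting with hyperplanes. Second, the equality clause of Conjecture \ref{my conjecture} is an ``if and only if'' with $\mathrm{emb}(X,x)=2n-\lceil\mathrm{lct}(\mathfrak m_x)\rceil$, and your treatment of the converse direction (``a codimension-$(n-k)$ locally complete intersection with $\mathrm{lct}(\mathfrak m_x)>k-1$ is rigid enough that its multiplicity is forced to equal $2^{\,n-k}$'') is asserted without any mechanism; the lower bound $e(\mathcal O_{X,x})\ge 2^{\,\mathrm{emb}(X,x)-n}$ for complete intersections (cf.\ Example 12.4.9 in \cite{F}, used in Corollary \ref{emb 2n-1} of the present paper) is the missing ingredient there, and it must be combined with the already-proved upper bound to squeeze out equality.
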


In this paper,  we study  the multiplicities  and the log canonical thresholds of abelian quotient complete intersection singularities in term of the special datum.  
We give an affirmative answer to  Conjecture \ref{my conjecture} for abelian quotient complete intersection singularities using Watanabe's classification.
\begin{thm}\label{Main Theorem A}
Let $G$ be a finite abelian subgroup of $\mathrm{SL}(n,\mathbb C)$.
If $R=\mathbb C[x_1,\dots,x_n]^G$ is a complete intersection, then 
$$e(R_{\mathfrak m^G})\le 2^{n-\lceil\mathrm{lct}(\mathfrak m^{G})\rceil}$$
and the equality holds if and only if $\mathrm{emb}(R_{\mathfrak m^G})=2n-\lceil\mathrm{lct}(\mathfrak m^{G})\rceil$, where $\mathfrak m^G=(x_1,\dots, x_n)\cap R$ and $\mathrm{emb}(R_{\mathfrak m^G})$ is the embedding dimension of the local ring $R_{\mathfrak m^G}$.

\end{thm}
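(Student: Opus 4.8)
The strategy is to exploit Watanabe's explicit classification of abelian quotient complete intersection singularities via special data, reducing the statement to a finite combinatorial bookkeeping over the list of such singularities. First I would recall the structure theorem: if $R = \mathbb{C}[x_1,\dots,x_n]^G$ is a complete intersection, then $\operatorname{Spec} R$ decomposes (up to the obvious product with a smooth factor and after a suitable coordinate change) as a product of "indecomposable" pieces coming from the special datum, each of which is either a smooth factor or a hypersurface of a very restricted type — essentially a cyclic quotient realized as a Pham--Brieskorn-like hypersurface $x_1^{a_1} + \cdots + x_r^{a_r}$ or a small number of exceptional series. Both the multiplicity and the log canonical threshold are multiplicative/additive under such products: $e(R_{\mathfrak m^G}) = \prod e((R_i)_{\mathfrak m_i})$, $\operatorname{lct}(\mathfrak m^G) = \sum \operatorname{lct}(\mathfrak m_i)$ (the latter by the Thom--Sebastiani / additivity property of log canonical thresholds of ideals under products, plus the contribution $1$ from each smooth coordinate), and $\operatorname{emb}(R_{\mathfrak m^G}) = \sum \operatorname{emb}((R_i)_{\mathfrak m_i})$. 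So the inequality $e \le 2^{n - \lceil \operatorname{lct}(\mathfrak m^G)\rceil}$ and the equality characterization both follow once we verify the corresponding per-factor estimate and handle the rounding $\lceil \sum \operatorname{lct}(\mathfrak m_i)\rceil$ correctly against the sum of the individual $\lceil\operatorname{lct}(\mathfrak m_i)\rceil$.

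Concretely, I would introduce for each factor the quantity measuring the "defect" in Conjecture \ref{my conjecture}, and show that each indecomposable hypersurface factor $S$ of dimension $d$ with embedding dimension $\operatorname{emb}(S)$ satisfies $e(S) \le 2^{d - \lceil\operatorname{lct}(S)\rceil}$ with equality exactly when $\operatorname{emb}(S) = 2d - \lceil\operatorname{lct}(S)\rceil$; for a hypersurface $d = \operatorname{emb}(S) - 1$, so this says $e(S) \le 2^{\operatorname{emb}(S) - 1 - \lceil\operatorname{lct}(S)\rceil}$ with equality iff $\operatorname{emb}(S) - 1 = \lceil\operatorname{lct}(S)\rceil$, i.e. iff the codimension equals $\lceil\operatorname{lct}(S)\rceil$. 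Here is where Watanabe's list enters: for each entry I compute $e$ (the multiplicity of the hypersurface singularity, which is the order of vanishing, so $\min a_i$ in the Pham--Brieskorn case) and $\operatorname{lct}$ (for $x_1^{a_1} + \cdots + x_r^{a_r}$ this is $\min(1, \sum 1/a_i)$, and for the ideal $\mathfrak m$ one uses the known formula $\operatorname{lct}(\mathfrak m) = \min(r, \sum 1/a_i \cdot (\text{adjusted}))$ — more precisely $\operatorname{lct}_{\mathfrak m}$ involves the weights and must be read off from the special datum), and then verify the numerical inequality. The additivity step then requires a lemma of the form: if $e_i \le 2^{d_i - \lceil t_i\rceil}$ for all $i$, then $\prod e_i \le 2^{\sum d_i - \lceil \sum t_i\rceil}$, which holds because $\sum \lceil t_i\rceil \ge \lceil \sum t_i\rceil$, with the subtlety that equality throughout forces at most a controlled amount of "fractional slack" to be distributed — precisely the content that makes the equality characterization survive the rounding.

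The main obstacle I anticipate is twofold. First, correctly extracting from Watanabe's special-datum formalism the exact values of $\operatorname{lct}(\mathfrak m^G)$ — not the lct of the defining equation as a hypersurface in affine space, but the lct of the maximal ideal $\mathfrak m^G$ of the ring, which is the relevant invariant in Conjecture \ref{my conjecture} — and checking that this matches a clean closed-form in the weights $a_i$ (or the type parameters) of the datum; log canonical thresholds of maximal ideals of quotient singularities can be computed via the discrepancies of the toric/weighted-blowup resolution or via motivic integration, but lining this up cleanly with Watanabe's parameters will take care. Second, the equality case: after the product reduction, equality in $e = 2^{n - \lceil\operatorname{lct}\rceil}$ requires simultaneously equality in each factor's estimate and tightness in the rounding inequality $\sum\lceil t_i\rceil \ge \lceil\sum t_i\rceil$; I expect that in each individual hypersurface factor the $\operatorname{lct}$ is actually an integer (or that the non-integer cases never achieve per-factor equality), which would make the rounding automatically tight and reduce the global equality statement to the conjunction of the local ones — but verifying this "integrality at equality" phenomenon across all the exceptional series in the classification is the delicate point. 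If some factor has genuinely fractional lct at equality, the combinatorial lemma on distributing fractional parts must be stated and checked with more care. Once these two points are settled, the theorem assembles immediately from the multiplicativity of $e$, the additivity of $\operatorname{lct}$ and $\operatorname{emb}$, and the per-factor verification against Watanabe's finite list.
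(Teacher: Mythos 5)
Your reduction to connected components and your attention to the rounding issue $\sum_i\lceil t_i\rceil\ge\lceil\sum_i t_i\rceil$ do match the paper (Propositions \ref{non-connected}, \ref{log canonical threshold}, \ref{embedding dimension log canonical threshold1}), but the core of your plan rests on two claims that are false for this class of singularities. First, the connected pieces of a special datum are not hypersurfaces: a connected $n$-dimensional datum $\mathbb D$ has $\mathrm{emb}(R_{\mathbb D})=n+|\{J\in D:\ |J|\ge 2\}|$ (Proposition \ref{embedding dimension}), so the codimension can be as large as $n-1$ (e.g.\ a binary tree of weights); the hypersurface case is only a very special instance. Second, there is no finite list to check: special data form an infinite family of weighted forests, so ``verify the numerical inequality for each entry of Watanabe's list'' cannot be carried out entry by entry, and one needs an induction on the structure of the datum. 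Consequently your formula $e=\min a_i$ for the multiplicity of a factor has no meaning for the actual factors, and the per-factor verification step of your plan is vacuous.

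What is missing, and what the paper supplies, is the inductive step that peels off the maximal element $J$ of a connected datum: by Lemma \ref{reduce lemma}, $R_{\mathbb D}\cong R_{\mathbb D\setminus J}[Y]/(Y^{a}-x_{J_1}\cdots x_{J_m})$ with $a=w(J_1)$, and Proposition \ref{lct=lct} proves $e(R_{\mathbb D})\le a\,e(R_{\mathbb D\setminus J})$, with equality when $\mathrm{lct}(\mathfrak m_{\mathbb D})=\mathrm{lct}(\mathfrak m_{\mathbb D\setminus J})/a$. That step is not formal: it requires producing a minimal reduction of $\mathfrak m_{\mathbb D}$ via integral closures, using Howald's description of the monomial ideal $\mathfrak a_{\mathbb D}$ to control $\mathrm{lct}(\mathfrak m_{\mathbb D})$ (Lemma \ref{log canonical threshold lemma}, Proposition \ref{log canonical threshold max 1 lct}). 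The inequality then follows from the elementary bound $a\le 2^{\lceil b\rceil-\lceil b/a\rceil}$ for $2\le a\le b$ (Lemma \ref{lemma inequality}), and in the remaining case $\mathrm{lct}(\mathfrak m_{\mathbb D})=1$ from Corollary \ref{emb 2n-1}. Your guess that equality forces the individual thresholds to be integers is also not the mechanism: equality forces $a=2$ and $\lceil\mathrm{lct}(\mathfrak m_{\mathbb D\setminus J})\rceil-\lceil\mathrm{lct}(\mathfrak m_{\mathbb D})\rceil=1$ (Lemma \ref{embedding dimension log canonical threshold2}), and the thresholds of the components need not be integers. Without the comparison $e(R_{\mathbb D})\le w(J_1)\,e(R_{\mathbb D\setminus J})$ and the induction on the datum, the argument does not go through.
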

Moreover we give a lower bound of the multiplicity of abelian quotient complete intersection singularities using the special datum and the log canonical threshold.

The paper is organized as follows: In Section $2$, we introduce the definition of a special datum and show some basic properties of abelian quotient complete intersection singularities in term of the special datum.
In Section $3$,  we prepare some useful and important propositions. They will play a crucial role in this paper.
In Section $4$, we give an upper bound of the multiplicity of abelian quotient complete intersection singularities.
In Section $5$, we give a lower bound of the multiplicity of abelian quotient complete intersection singularities.

\section{abelian quotient complete intersection singularities}
In this section, we recall the definition of a special datum and study the properties of  abelian quotient complete intersection singularities in term of the special datum.

\begin{defn}
Let $n\ge 1$ be an integer.
An $n$-dimensional special datum $\mathbb D=(D,w)$ is a pair consisting of a  set of non-empty subsets of $\{1, 2, \dots, n\}$ (i.e. $D\subset 2^{\{1,2,\dots, n \}}\setminus\emptyset$), together with a  function $w: D\to\mathbb N$ such that:

\begin{enumerate}
\item For each $i\in\{1, 2,\dots, n\}$ we have $\{ i\}\in D$.
\item For every pair of  $J,J'\in D$, either $J\subset J'$,  $J'\subset J$
or $J\cap J'=\emptyset$.
\item  If $J$ is a maximal element with respect to the inclusion relation
``$\subset$'', then $w(J)=1$.
\item  If $J,J'\in D$ and $J\subsetneq J'$, then $w(J)>w(J')$ and $w(J')|w(J)$.
\item  For $J_1,J_2,J\in D$ with $J_1\sqsubset J, J_2\sqsubset J$, we have $w(J_1)=w(J_2)$.
(We write $J\sqsubset J'$ if  $J\subsetneq  J'$ and if there is no element of $D$ between $J$ and $J'$.)
\end{enumerate}

\end{defn}

\begin{defn}\label{special datum ring group}
Let $\mathbb D=(D,w)$ be an $n$-dimensional special datum.
We define the ring $R_{\mathbb D}$ with the maximal ideal $\mathfrak m_{\mathbb D}$ and the group $G_{\mathbb D}$  by
$$R_{\mathbb D}:=\mathbb C[x_J^{w(J)}|\ J\in D],\ \mathrm{where}\ x_J=\prod_{j\in J}x_j,$$
$$\mathfrak m_{\mathbb D}:=(x_J^{w(J)}|\ J\in D)\ \ \ \mbox{and} $$
\[ G_{\mathbb D}:= \Braket{E,\Set{(\zeta_w,\zeta_w^{-1};i,j) |
\begin{array}{l}
\text{$J_1,J_2,J\in D, i\in J_1, j\in J_2, J_1\sqsubset J,$} \\
\text{$J_2\sqsubset J$ and $w=w(J_1)=w(J_2)$}
\end{array}}}. \]
Here $E$ denotes the identity matrix in $\mathrm{SL}(n,\mathbb C)$ and $\zeta_w$ denotes a primitive $w$-th root of unit and $(a,b;i,j)$ denotes the diagonal matrix in $\mathrm{SL}(n,\mathbb C)$ whose $(i,i)$ component is $a$ and $(j,j)$ component is $b$ and the other diagonal components are $1$.
\end{defn}

\begin{prop}$\mathrm(${\rm Proposition 1.7  in \cite{W}}$\mathrm)$\label{Watanabe invariant ring}
Let $\mathbb D=(D,w)$ be an $n$-dimensional special datum.
Then $R_{\mathbb D}=\mathbb C[x_1,\dots,x_n]^{G_{\mathbb D}}$ and $R_{\mathbb D}$ is a complete intersection.
\end{prop}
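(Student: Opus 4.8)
We establish the two assertions separately: first the equality $R_{\mathbb D}=\mathbb C[x_1,\dots,x_n]^{G_{\mathbb D}}$, then, by writing down an explicit complete list of relations among the generators of $R_{\mathbb D}$, the complete intersection property.

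For the inclusion $R_{\mathbb D}\subseteq\mathbb C[x_1,\dots,x_n]^{G_{\mathbb D}}$ I would evaluate a generator $g=(\zeta_w,\zeta_w^{-1};i,j)$ of $G_{\mathbb D}$, where $i\in J_1$, $j\in J_2$, $J_1\sqsubset J$, $J_2\sqsubset J$ and $w=w(J_1)=w(J_2)$, on a generator $x_{J'}^{w(J')}$ of $R_{\mathbb D}$: its image is $\zeta_w^{\pm w(J')}x_{J'}^{w(J')}$ when exactly one of $i,j$ lies in $J'$, and $x_{J'}^{w(J')}$ otherwise; a short case analysis using the laminar condition (2) to compare $J'$ with $J_1,J_2\sqsubset J$ shows that in the first case necessarily $J'\subseteq J_1$ (resp.\ $J'\subseteq J_2$), whence $w\mid w(J')$ by (4), so $g$ fixes $x_{J'}^{w(J')}$. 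For the reverse inclusion, $G_{\mathbb D}$ is a finite group of diagonal matrices, so $\mathbb C[x_1,\dots,x_n]^{G_{\mathbb D}}$ is spanned by the invariant monomials, and using (5) together with the fact that the children of a non-singleton $J\in D$ partition $J$, one sees that a monomial $x_1^{a_1}\cdots x_n^{a_n}$ is invariant precisely when, for every non-singleton $J\in D$, the exponents $(a_i)_{i\in J}$ are pairwise congruent modulo the common child-weight $w_J:=w(J_k)$ ($J_k\sqsubset J$). I would then show that any such monomial lies in $R_{\mathbb D}$ by an induction along the laminar tree, proving: if $b$ is supported on $\{i\in J\}$ with $w(J)\mid b_i$ for all $i\in J$ and the above congruences hold for all non-singleton $J'\subseteq J$, then $\prod_{i\in J}x_i^{b_i}\in\mathbb C[x_{J'}^{w(J')}\mid J'\subseteq J]$. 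In the inductive step one factors off the largest power of $x_J^{w(J)}$ permitted by the congruences, notes that the remaining exponents stay nonnegative, divisible by $w_J=w(J_k)$ and congruent level by level — here (4) is what makes the divisibilities match — and recurses into each child; applied to the maximal elements of $D$, which have weight $1$, this returns the monomial inside $R_{\mathbb D}$.

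For the complete intersection property, put $S=\mathbb C[y_J\mid J\in D]$ and let $\phi\colon S\twoheadrightarrow R_{\mathbb D}$ be $y_J\mapsto x_J^{w(J)}$; by the above $\dim R_{\mathbb D}=n$, so $\ker\phi$ has height $|D|-n$ (there are exactly $n$ singletons). For each non-singleton $J\in D$, with children $J_1,\dots,J_p$ ($p\ge2$, and $w_J/w(J)\ge2$ by (4)), set
\[
f_J \;=\; y_J^{\,w_J/w(J)}-\prod_{J_k\sqsubset J}y_{J_k}\;\in\;\ker\phi,
\]
the membership being immediate from $x_J^{w_J}=\prod_k x_{J_k}^{w(J_k)}$; this is a list of $|D|-n$ elements. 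The claim is $\ker\phi=(f_J:J\ \text{non-singleton})$. To see it, order the $y_J$ so that each $y_J$ exceeds the variables of its children (say by $|J|$ decreasing, broken lexicographically); then the leading monomial of $f_J$ is $y_J^{\,w_J/w(J)}$, and as $J$ varies these leading monomials involve pairwise distinct variables, hence are pairwise coprime, so by Buchberger's criterion $\{f_J\}$ is a Gr\"obner basis of $(f_J)$ and $S/(f_J)$ is spanned by the standard monomials $\prod_J y_J^{m_J}$ with $0\le m_J<w_J/w(J)$ for non-singleton $J$. It therefore suffices to prove that $\phi$ is injective on standard monomials: together with the surjectivity of $\phi$ this makes the graded Hilbert functions of $S/(f_J)$ and $R_{\mathbb D}$ coincide, whence $\ker\phi=(f_J)$ and $R_{\mathbb D}\cong S/(f_J)$ is a complete intersection.

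The main obstacle is this last injectivity statement. Suppose two standard monomials have the same $\phi$-image; writing $\delta_J$ for the difference of their $J$-exponents, one gets $\sum_{J\ni i}\delta_J w(J)=0$ for every $i$, with $|\delta_J|\le w_J/w(J)-1$ for non-singleton $J$. If some $\delta_J\ne0$, choose $J^{\ast}$ minimal for inclusion with $\delta_{J^{\ast}}\ne0$ and pick $i\in J^{\ast}$; the elements of $D$ containing $i$ form a chain $\{i\}=C_0\subsetneq\dots\subsetneq C_t$ with $C_{l-1}\sqsubset C_l$, so by (5) $w_{C_l}=w(C_{l-1})$ and hence $|\delta_{C_l}|\,w(C_l)\le w(C_{l-1})-w(C_l)$ for $1\le l\le t$, while by (4) the weights are strictly decreasing along the chain. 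Writing $J^{\ast}=C_s$, minimality of $J^{\ast}$ gives $\delta_{C_l}=0$ for $l<s$, so $\delta_{C_s}w(C_s)=-\sum_{l>s}\delta_{C_l}w(C_l)$, and the right side has absolute value at most $\sum_{l>s}\bigl(w(C_{l-1})-w(C_l)\bigr)=w(C_s)-w(C_t)\le w(C_s)-1<w(C_s)$, forcing $\delta_{C_s}=0$, a contradiction. So the telescoping estimate — sharp precisely because of the strict decrease and divisibility in (4) and the uniformity in (5) — is the heart of the matter; the two inclusions, the choice of the relations $f_J$, and the Gr\"obner-basis observation are all routine once this combinatorial picture is set up.
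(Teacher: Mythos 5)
Your proof is correct. Note first that the paper itself gives no argument for this proposition: it is quoted verbatim from Watanabe (Proposition 1.7 in \cite{W}), and the only trace of Watanabe's proof in the present paper is Lemma \ref{reduce lemma}, which records its inductive step: one peels off a maximal non-singleton $J$ and realizes $R_{\mathbb D}$ as the hypersurface $R_{\mathbb D\setminus J}[Y]/(Y^{w(J_1)}-x_{J_1}\cdots x_{J_m})$ over the smaller invariant ring, so that both the identification with the invariant ring and the complete intersection property follow by induction on the laminar tree. You instead prove everything in one global step: you characterize the $G_{\mathbb D}$-invariant monomials by congruences of exponents along the tree, reduce them to $R_{\mathbb D}$ by the factor-off-and-recurse argument, and then exhibit the full presentation $R_{\mathbb D}\cong S/(f_J)$ at once, verifying $\ker\phi=(f_J)$ by the coprime-leading-terms Gr\"obner basis together with the telescoping estimate $\sum_{l>s}\bigl(w(C_{l-1})-w(C_l)\bigr)=w(C_s)-w(C_t)$ that shows $\phi$ separates standard monomials (your handling of the chain $\{i\}=C_0\sqsubset\cdots\sqsubset C_t$, using axioms (3)--(5), is exactly where the sharpness lives, and it is right). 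The trade-off: your route yields the explicit global presentation and the combinatorial description of the invariant monomials directly, whereas the inductive presentation is the form the rest of the paper actually exploits (Lemma \ref{reduce lemma}, Proposition \ref{lct=lct}, Proposition \ref{key proposition2}); your $f_J$ are precisely the relations that accumulate if one iterates Lemma \ref{reduce lemma} down the tree, so the two arguments are reorganizations of the same structure rather than genuinely different mechanisms.
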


\begin{thm}$\mathrm(${\rm Main Theorem  in \cite{W}}$\mathrm)$\label{Watanabe main theorem}
Let $G$ be a finite abelian subgroup of $\mathrm{SL}(n,\mathbb C)$ and if $\mathbb C[x_1,\dots,x_n]^G$ is a complete intersection, then there exist an $n$-dimensional special datum $\mathbb D$ and $T\in\mathrm{GL}(n,\mathbb C)$  such that 
$$\mathbb C[x_1,\dots,x_n]^{TGT^{-1}}=R_{\mathbb D}\ \  \mbox{and}\ \ \  TGT^{-1}=G_{\mathbb D}.$$
\end{thm}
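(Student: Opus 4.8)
The plan is to diagonalize $G$, read a candidate special datum off the minimal monomial generators of the invariant ring, verify the axioms of a special datum with the help of the complete intersection hypothesis, and then recover the group. A finite abelian subgroup of $\mathrm{GL}(n,\mathbb C)$ is simultaneously diagonalizable, so after conjugating by some $T_1\in\mathrm{GL}(n,\mathbb C)$ we may assume $G$ consists of diagonal matrices; then $G$ acts on $S=\mathbb C[x_1,\dots,x_n]$ by characters $\chi_i\colon G\to\mathbb C^\times$ with $g\cdot x_i=\chi_i(g)x_i$, and $R:=S^G=\bigoplus_{\mathbf a\in\Lambda}\mathbb C\,x^{\mathbf a}$ is the normal affine semigroup ring attached to $\Lambda=\{\mathbf a\in\mathbb Z_{\ge 0}^n:\sum_i a_i\chi_i=0\text{ in }\widehat G\}$; since $G\subset\mathrm{SL}(n,\mathbb C)$ we have $\sum_i\chi_i=0$, so $x_1\cdots x_n\in R$. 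Being $\mathbb N$-graded by total degree, $R$ has a unique finite set $M$ of minimal monomial generators; writing $\operatorname{supp}(x^{\mathbf a})=\{i:a_i>0\}$, set $D=\{\operatorname{supp}(m):m\in M\}$ and note $\{i\}\in D$ for each $i$ because $x_i^{\operatorname{ord}(\chi_i)}$ is a minimal generator.

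The combinatorial heart is to prove, using that $R$ is a complete intersection: (i) $D$ is laminar, i.e. any two of its members are nested or disjoint; and (ii) every minimal generator is a pure power of a squarefree monomial. From (ii) and an easy cancellation argument it follows that for each $J\in D$ there is a unique minimal generator supported on $J$, necessarily of the form $x_J^{w(J)}$ with $x_J=\prod_{j\in J}x_j$ and $w(J)\ge1$ the least integer with $x_J^{w(J)}\in R$. I would prove (i) and (ii) by induction on $n$: present $R$ as $\mathbb C[y_m:m\in M]/I$, where $I$ is a lattice (binomial) ideal because $R$ is toric; the complete intersection hypothesis forces $I$ to be minimally generated by exactly $|M|-n$ elements, and one argues that crossing supports, or a non-squarefree support pattern, among the minimal generators would force strictly more independent binomial relations than this bound allows. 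The same count shows the maximal members of $D$ partition $\{1,\dots,n\}$ into blocks, and restricting $\Lambda$ to the coordinates of a block reduces $n$. Granting (i) and (ii), the remaining axioms of a special datum are routine semigroup bookkeeping: $w(J)=1$ for maximal $J$, since $x_1\cdots x_n\in R$ together with the laminar/weight structure forces $\sum_{i\in J}\chi_i=0$ there; $w(J')\mid w(J)$ and $w(J)>w(J')$ when $J\subsetneq J'$, because $x_{J'}^{w(J')}$ must lie in the subalgebra generated by the generators supported strictly inside $J'$; and siblings have equal weight by comparing the $\chi_i$ inside a single block. Thus, after a permutation of the coordinates (absorbed into $T$), $\mathbb D=(D,w)$ is an $n$-dimensional special datum with $R=\mathbb C[x_J^{w(J)}:J\in D]=R_{\mathbb D}$.

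Finally, to identify the group: by Proposition \ref{Watanabe invariant ring} we have $S^{G_{\mathbb D}}=R_{\mathbb D}=R=S^G$, and since both $G$ and $G_{\mathbb D}$ act faithfully on $S$ while $\operatorname{Frac}(S)/\operatorname{Frac}(S^H)$ is Galois with group $H$ for every finite faithful $H$, equality of the invariant subrings gives $G=G_{\mathbb D}$; undoing the conjugations of the first step produces $T\in\mathrm{GL}(n,\mathbb C)$ with $\mathbb C[x_1,\dots,x_n]^{TGT^{-1}}=R_{\mathbb D}$ and $TGT^{-1}=G_{\mathbb D}$. The main obstacle is step (i): converting the bare complete intersection property into the concrete laminarity (and squarefreeness) of the generator supports. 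That is where the bookkeeping of binomial relations against the codimension $|M|-n$ and the inductive peeling of the top level of $D$ must be done carefully; once $D$ is known to be laminar, the remaining axioms and the final identification are comparatively formal.
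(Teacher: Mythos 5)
This statement is quoted from Watanabe's paper \cite{W}; the present paper offers no proof of it at all, so there is nothing internal to compare your argument against --- it has to stand on its own as a proof of Watanabe's classification.

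As it stands it does not. The reduction to a diagonal group, the description of $S^G$ as the semigroup ring of $\Lambda=\{\mathbf a\in\mathbb Z_{\ge0}^n:\sum_ia_i\chi_i=0\}$, the observation that $x_1\cdots x_n$ and each $x_i^{\mathrm{ord}(\chi_i)}$ are invariant, and the final Galois-theoretic identification $G=G_{\mathbb D}$ from $S^G=S^{G_{\mathbb D}}$ are all fine, and the last step correctly leans on Proposition \ref{Watanabe invariant ring}. But your items (i) (laminarity of the supports) and (ii) (every minimal generator is a pure power of a squarefree monomial) are not lemmas on the way to the theorem --- together with the divisibility and equal-weight conditions on $w$, they \emph{are} the theorem, and your justification for them is only the assertion that crossing or non-squarefree supports ``would force strictly more independent binomial relations'' than the codimension $|M|-n$ permits. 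No such count is carried out, and it is not routine: deciding which lattice ideals of affine semigroup rings are complete intersections is a genuinely delicate combinatorial problem, and Watanabe's proof of exactly this step occupies the bulk of \cite{W}. Note also that (ii) really does use the full complete intersection hypothesis --- for $G=\langle\mathrm{diag}(\zeta_3,\zeta_3,\zeta_3)\rangle\subset\mathrm{SL}(3,\mathbb C)$ the minimal generator $x_1^2x_2$ is not a pure power of a squarefree monomial --- so nothing short of the counting argument you defer will do. Similarly, the divisibility $w(J')\mid w(J)$ for $J\subsetneq J'$ is not automatic from $w(J)=\mathrm{ord}(\sum_{i\in J}\chi_i)$ and cannot be dismissed as bookkeeping. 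In short, the scaffolding is right and plausibly mirrors the shape of Watanabe's argument, but the proof's entire content is concentrated in the step you explicitly leave open.
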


\begin{notation}
To illustrate a special datum $\mathbb D$, we define the graph
of $\mathbb D=(D,w)$ as follows;
\begin{enumerate}
\item We represent $J\in D$ by  a circle and we write the integer $w(J)$ inside it.
\item If $J\sqsubset J'$, we join the corresponding circles by a line segment
in such a way that the circle corresponding $J'$ lies above that of $J$.
\end{enumerate}

\end{notation}

The following proposition is  immediate from Definition \ref{special datum ring group}.

\begin{prop}\label{embedding dimension}
Let $\mathbb D=(D,w)$ be an $n$-dimensional special datum.
Then $$\mathrm{emb}(R_{\mathbb D})=|\{J\in D\}|=n+|\{J\in D||J|\ge2\}|.$$
\end{prop}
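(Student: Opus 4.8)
The plan is to realize $R_{\mathbb D}$ as the semigroup ring of the submonoid $\Lambda\subseteq\mathbb N^{n}$ generated by the vectors $v_{J}:=w(J)\,\mathbf e_{J}$, $J\in D$, where $\mathbf e_{J}:=\sum_{j\in J}\mathbf e_{j}$. This is immediate from Definition~\ref{special datum ring group}: $R_{\mathbb D}\subseteq\mathbb C[x_{1},\dots,x_{n}]$ is generated by monomials, hence is the $\mathbb C$-span of the monomials $x^{\mathbf a}$ it contains, which are exactly those with $\mathbf a\in\Lambda$, and likewise $\mathfrak m_{\mathbb D}=\bigoplus_{\mathbf a\in\Lambda\setminus\{0\}}\mathbb C\,x^{\mathbf a}$. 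Since $R_{\mathbb D}$ is $\mathbb N$-graded with $(R_{\mathbb D})_{0}=\mathbb C$ and $\mathfrak m_{\mathbb D}$ is its unique graded maximal ideal, $\mathrm{emb}(R_{\mathbb D})=\dim_{\mathbb C}\mathfrak m_{\mathbb D}/\mathfrak m_{\mathbb D}^{2}$; and since $R_{\mathbb D}$ also inherits the finer $\mathbb N^{n}$-grading of $\mathbb C[x_{1},\dots,x_{n}]$, whose homogeneous components are at most one-dimensional, $\mathfrak m_{\mathbb D}^{2}$ is the $\mathbb C$-span of the $x^{\mathbf a}$ with $\mathbf a$ a sum of two nonzero elements of $\Lambda$. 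Hence $\mathrm{emb}(R_{\mathbb D})$ equals the number of \emph{indecomposable} elements of $\Lambda$, i.e.\ of $\mathbf a\in\Lambda\setminus\{0\}$ that are not a sum of two nonzero elements of $\Lambda$. As the $v_{J}$ are pairwise distinct (they have pairwise distinct supports $J$) and generate $\Lambda$, every indecomposable element of $\Lambda$ is one of the $v_{J}$, so it remains to show that each $v_{J}$ is itself indecomposable; this gives $\mathrm{emb}(R_{\mathbb D})=|D|$, and then $|D|=n+|\{J\in D:|J|\ge 2\}|$ follows from condition~(1), which forces $D$ to contain exactly $n$ singletons.

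To prove indecomposability I would argue by contradiction: suppose $v_{J}=\sum_{\ell=1}^{k}v_{J_{\ell}}$ with $J_{\ell}\in D$ (repetitions allowed) and $k\ge 2$. Comparing supports gives $\bigcup_{\ell}J_{\ell}=J$, and since the summand $v_{J_{\ell}}$ involves only the variables indexed by $J_{\ell}$ this forces $J_{\ell}\subseteq J$ for all $\ell$. If some $J_{\ell}=J$ then $\sum_{\ell'\ne\ell}v_{J_{\ell'}}=0$, which is impossible for a nonempty sum of nonzero vectors with nonnegative entries; so $J_{\ell}\subsetneq J$ for every $\ell$. Now fix $i\in J$. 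By condition~(2) the members of $\{J_{\ell}:i\in J_{\ell}\}$ are pairwise comparable, so this nonempty family has a largest element $J^{*}$, and $J^{*}\subsetneq J$. By condition~(4), $w(J^{*})\mid w(J_{\ell})$ whenever $i\in J_{\ell}$, so reading off the $i$-th coordinate of the identity $v_{J}=\sum_{\ell}v_{J_{\ell}}$ gives $w(J^{*})\mid w(J)$. But condition~(4) applied to $J^{*}\subsetneq J$ gives $w(J)\mid w(J^{*})$ and $w(J)<w(J^{*})$, so $w(J^{*})\le w(J)<w(J^{*})$, a contradiction.

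The only step here that is not purely formal is this last indecomposability argument, and its essential input is the divisibility clause $w(J')\mid w(J)$ for $J\subsetneq J'$ in condition~(4), used together with the nesting property of condition~(2); the two displayed equalities then amount to bookkeeping, and conditions (3) and (5) are not needed.
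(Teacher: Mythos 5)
Your proof is correct; the paper offers no argument here (it declares the proposition ``immediate'' from Definition 2.2), and your semigroup-ring reading of $R_{\mathbb D}$ with the count of indecomposable elements of $\Lambda$ is exactly the argument being left implicit. One small remark: the final contradiction does not need the divisibility clause or the maximal element $J^{*}$ at all --- once every $J_{\ell}\subsetneq J$, picking any $\ell_{0}$ with $i\in J_{\ell_{0}}$ gives $w(J)\ge w(J_{\ell_{0}})>w(J)$ directly from the strict monotonicity in condition (4), so the nesting condition (2) is also not needed.
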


\begin{ex}
Let $D=\{ \{1,\dots,n\},\{1\},\dots,\{n\}\}$ and $w$ be the function from $D$ to $\mathbb N$ such that $w(\{1\})=\cdots =w(\{n\})=a$, $w(\{1,\dots,n\})=1$.
Then $\mathbb D=(D,w)$ is a special datum.
By the definitions, we have  
$$R_{\mathbb D}=\mathbb C[x_1^a,\dots,x_n^a,x_1\cdots x_n],$$
$$G_{\mathbb D}=\Braket{(\zeta_a,\zeta_a^{-1};1,2),\dots,(\zeta_a,\zeta_a^{-1};1,n)}$$
and the graph of $\mathbb D$ is
$$
    \xymatrix{
        &&{\textcircled{1}}  \ar@{-}[dll] \ar@{-}[dl]\ar@{-}[dr] \ar@{-}[drr]\\
*{\textcircled{a}}  & *{\textcircled{a}}  & {\cdots}  & *{\textcircled{a}}  & *{\textcircled{a}}  }
$$

\end{ex}

\begin{ex}
Let $D=\{ \{1,2\},\{3,4\},\{1\},\{2\},\{3\},\{4\}\}$ and $w$ be the function from $D$ to $\mathbb N$ such that $w(\{1\})=w(\{2\})=a$, $w(\{3\})=w(\{4\})=b$, $w(\{1,2\})=w(\{3,4\})=1$.
Then $\mathbb D=(D,w)$ is a special datum.
By the definitions, we have  
$$R_{\mathbb D}=\mathbb C[x_1^a,x_2^a,x_3^b,x_4^b,x_1x_2,x_3x_4],$$
$$G_{\mathbb D}=\Braket{(\zeta_a,\zeta_a^{-1};1,2),(\zeta_b,\zeta_b^{-1};3,4)}$$
and the graph of $\mathbb D$ is
$$
\begin{xy}
\ar @{-}(5,30) *{\textcircled{a}};(15,40) *{\textcircled{1}}
\ar @{-}(25,30) *{\textcircled{a}};(15,40) *{\bigcirc}
\ar @{-}(35,30)  *{\textcircled{b}};(45,40) *{\textcircled{1}}
\ar @{-}(55,30)  *{\textcircled{b}};(45,40) *{\bigcirc}
\end{xy}
$$
\end{ex}

\begin{defn}
A special datum $\mathbb D=(D,w)$ is said to be connected if $D$ has the unique maximal element.
\end{defn}

The following lemma is proved in the proof of Proposition 1.7 in \cite{W}

\begin{lem}\label{reduce lemma}
Let $\mathbb D=(D,w)$ be a  special datum  and $J\in D$ be a maximal element of $D$ with $|J|\ge 2$. 
Let $J_1,\dots,J_m$ be the elements of $D$ such that $J=J_1\cup\dots\cup J_m$ and
$J_i\sqsubset  J$ for $i=1,\dots,m$.
Let  $D'=D\setminus J$ and   $w': D'\to\mathbb N$ be the function such that
\begin{equation*}
w'(J')  =
\begin{cases}
w(J')/w(J_i)  & \text{if $J'\subset J_i$,}\\
w(J') & \text{if $J'\nsubseteq J$.}
\end{cases}
\end{equation*}
Then  $\mathbb D'=(D',w')$ is a special datum and 
$R_\mathbb D\cong R_{{\mathbb D}'}[Y]/(Y^{w(J_i)}-x_{J_1}\cdots x_{J_m})$.
\end{lem}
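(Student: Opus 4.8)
The plan is to establish the two assertions in turn. Throughout put $v:=w(J_1)=\cdots=w(J_m)$, a well-defined integer by axiom (5) for $\mathbb D$; since each $J_i\subsetneq J$ and $w(J)=1$ ($J$ being maximal), axiom (4) gives $v\ge 2$ and $v=w(J_i)\mid w(J')$ for every $J'\in D$ with $J'\subseteq J_i$, so $w'(J')=w(J')/v$ is a positive integer. Also distinct $J_i$'s are $\subseteq$-incomparable, hence disjoint by axiom (2), so $J=J_1\sqcup\cdots\sqcup J_m$ and $x_J=x_{J_1}\cdots x_{J_m}$.

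To see that $\mathbb D'$ is a special datum, I would check axioms (1)--(5) for $(D',w')$, the organizing facts being: removing the maximal set $J$ leaves $\subseteq$, $\sqsubset$, and maximality unchanged among the surviving sets except that $J_1,\dots,J_m$ become maximal in $D'$; every $\subseteq$-comparable pair $J'\subsetneq J''$ in $D'$ either lies inside a single $J_i$ or has both terms disjoint from $J$ (axiom (2) for $\mathbb D$ together with maximality of $J$ rules out any ``mixed'' pair); and the same dichotomy applies to a branching set of $D'$ and its children. Granting these, (1) and (2) are immediate, (3) holds because $w'(J_i)=w(J_i)/v=1$ while old maximal sets keep $w'=w=1$, (4) holds because inside a $J_i$ both weights are divided by the common factor $v$ (preserving the strict inequality and the divisibility, using $v\mid w(J'')$) and outside $J$ nothing changes, and (5) follows from the corresponding equalities of $w$-values in $\mathbb D$.

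For the isomorphism, realize $R_{\mathbb D}$ and $R_{\mathbb D'}$ inside $\mathbb C[x_1,\dots,x_n]$ via their monomial generators $g_{J'}:=x_{J'}^{w(J')}$ $(J'\in D)$ and $h_{J'}:=x_{J'}^{w'(J')}$ $(J'\in D')$. Then $g_{J'}=h_{J'}^v$ if $J'\subseteq J_i$ for some $i$, $g_{J'}=h_{J'}$ if $J'\nsubseteq J$, and $g_J=h_{J_1}\cdots h_{J_m}$ (as $w'(J_i)=1$). The crux is that $h_{J'}\mapsto g_{J'}$ $(J'\in D')$ extends to a $\mathbb C$-algebra homomorphism $\tau\colon R_{\mathbb D'}\to\mathbb C[x_1,\dots,x_n]$: writing $R_{\mathbb D'}$ as $\mathbb C[T_{J'}\colon J'\in D']$ modulo the kernel of $T_{J'}\mapsto h_{J'}$, a kernel generated by binomials $\prod T_{J'}^{a_{J'}}-\prod T_{J'}^{b_{J'}}$ with $\prod h_{J'}^{a_{J'}}=\prod h_{J'}^{b_{J'}}$, it suffices to check that each such relation forces $\prod g_{J'}^{a_{J'}}=\prod g_{J'}^{b_{J'}}$; comparing the exponent of each variable $x_k$, the sets of $D'$ containing $k$ form a $\subseteq$-chain lying either entirely inside one $J_i$ (so $w=v\,w'$ on it and one scales the relation by $v$) or entirely outside $J$ (where $w=w'$). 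Extending $\tau$ to $\Theta\colon R_{\mathbb D'}[Y]\to\mathbb C[x_1,\dots,x_n]$ by $Y\mapsto g_J$ gives a surjection onto $\mathbb C[g_{J'}\colon J'\in D]=R_{\mathbb D}$, and $\Theta(Y^v-h_{J_1}\cdots h_{J_m})=g_J^v-\prod_i g_{J_i}=0$ because $g_J^v=\prod_i x_{J_i}^v=\prod_i g_{J_i}$; hence $\Theta$ factors through a surjection $R_{\mathbb D'}[Y]/(Y^v-h_{J_1}\cdots h_{J_m})\twoheadrightarrow R_{\mathbb D}$. It remains to note that the source is an integral domain of Krull dimension $n$: it is free of rank $v$ over the $n$-dimensional domain $R_{\mathbb D'}$, and $Y^v-x_J$ is irreducible over $\operatorname{Frac}(R_{\mathbb D'})$ since it is already irreducible over the larger field $\mathbb C(x_1,\dots,x_n)$, where $x_J=\prod_{j\in J}x_j$ is squarefree, hence neither a proper power nor $-4$ times a fourth power. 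A surjection between finitely generated $\mathbb C$-algebras that are domains of the same dimension is an isomorphism, completing the proof.

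The main obstacle is the construction of $\tau$: there is no inclusion $R_{\mathbb D'}\to R_{\mathbb D}$ (indeed $R_{\mathbb D}\subseteq R_{\mathbb D'}$), and raising generators to the $v$-th power is not a ring map, so well-definedness genuinely uses the chain structure of $D$, i.e. the special-datum axioms. Everything else --- the axiom check for $\mathbb D'$, irreducibility of $Y^v-x_J$, and the surjection-of-domains step --- is routine once $\tau$ is in place. Alternatively, one can compare the explicit complete-intersection presentations of $R_{\mathbb D}$ and $R_{\mathbb D'}$ used in the proof of Proposition 1.7 of \cite{W}: the defining relations of $R_{\mathbb D'}$ together with $Y^v-x_{J_1}\cdots x_{J_m}$ are precisely those of $R_{\mathbb D}$ once $x_J^{w(J)}$ is identified with $Y$.
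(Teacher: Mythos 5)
Your argument is correct and takes essentially the route the paper relies on: the paper itself gives no proof of this lemma (it defers to the proof of Proposition 1.7 in \cite{W}), but the chain of identifications you justify, namely $R_{\mathbb D}=\mathbb C[x_{J'}^{w(J')}\mid J'\in D\setminus J][x_J]\cong R_{\mathbb D\setminus J}[Y]/(Y^{w(J_1)}-x_{J_1}\cdots x_{J_m})$, is exactly the one invoked without verification later in the proof of Proposition \ref{lct=lct}, and your checks of the special-datum axioms, of the vanishing of $Y^{w(J_1)}-h_{J_1}\cdots h_{J_m}$ under $\Theta$, and of the domain-plus-equal-dimension step are all sound. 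One simplification worth noting: your map $\tau$ is just the restriction to $R_{\mathbb D'}$ of the injective $\mathbb C$-algebra endomorphism of $\mathbb C[x_1,\dots,x_n]$ sending $x_j\mapsto x_j^{w(J_1)}$ for $j\in J$ and $x_j\mapsto x_j$ otherwise, which makes well-definedness (and injectivity) immediate and lets you bypass the binomial-ideal analysis entirely.
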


\begin{defn}
Let $\mathbb D=(D,w)$ be a  special datum  and $J\in D$ be a maximal element of $D$ with $|J|\ge 2$. 
Let $J_1,\dots,J_m$ be the elements of $D$ such that $J=J_1\cup\dots\cup J_m$ and
$J_i\sqsubset  J$ for $i=1,\dots,m$.
Then we define the special datum  $\mathbb D\setminus J=(D\setminus J,v)$ by 
\begin{equation*}
v(J')  =
\begin{cases}
w(J')/w(J_i)  & \text{if $J'\subset J_i$,}\\
w(J') & \text{if $J'\nsubseteq J$.}
\end{cases}
\end{equation*}
\end{defn}

\begin{defn}
Let $\mathbb D=(D,w)$ be a special datum and $J$ be a  element of $D$.
We define the $|J|$-dimensional special datum $\mathbb D_J=(D_J,w_J)$ by $D_J=\{J'\in D|J'\subset J\}$ and $w_J(J')=\frac{w(J')}{w(J)}$ for $J'\in D_J$.
We call $\mathbb D_J$ a connected component of $\mathbb D$ if $J$ is a maximal element of $D$.

\end{defn}

We denote by $e(R_{\mathbb D})$  the  Hilbert-Samuel multiplicity of the local ring $(R_{\mathbb D})_{\mathfrak m_{\mathbb D}}$ and denote by $\mathrm{emb}(R_{\mathbb D})$  the  embedding dimension of the local ring $(R_{\mathbb D})_{\mathfrak m_{\mathbb D}}$.
The following proposition is immediate from the definition of a connected component of a special datum.
\begin{prop}\label{non-connected}
Let $\mathbb D=(D,w)$ be a  special datum and $J_1,\dots,J_m$ be the maximal elements of $D$.
Then 
\begin{enumerate}

\item[$\mathrm{(1)}$] $R_{\mathbb D}=R_{\mathbb D_{J_1}}\otimes_{\mathbb C}\cdots\otimes_{\mathbb C}R_{\mathbb D_{J_m}}.$
\item[$\mathrm{(2)}$] $|G_{\mathbb D}|=|G_{\mathbb D_{J_1}}|\cdots|G_{\mathbb D_{J_m}}|.$
\item[$\mathrm{(3)}$] $e(R_{\mathbb D})=e(R_{\mathbb D_{J_1}})\cdots e(R_{\mathbb D_{J_m}})$. 
\item[$\mathrm{(4)}$] $\mathrm{emb}(R_{\mathbb D})=\mathrm{emb}(R_{\mathbb D_{J_1}})+\cdots+\mathrm{emb}(R_{\mathbb D_{J_m}})$.
\end{enumerate}
\end{prop}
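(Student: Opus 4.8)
The plan is to reduce everything to the single structural fact that the ring $R_{\mathbb D}$ decomposes as a tensor product over the connected components, and then read off the other three items as formal consequences. First I would unwind the definitions: if $J_1,\dots,J_m$ are the maximal elements of $D$, then by axiom (2) of a special datum they are pairwise disjoint, so the underlying index set $\{1,\dots,n\}$ splits as $J_1\sqcup\cdots\sqcup J_m$, and every $J\in D$ is contained in exactly one $J_k$ (again by axiom (2), since $J\cap J_k\neq\emptyset$ forces $J\subset J_k$). Hence $D=D_{J_1}\sqcup\cdots\sqcup D_{J_m}$ as a set, and the generator $x_J^{w(J)}$ of $R_{\mathbb D}$ attached to $J\subset J_k$ involves only the variables $\{x_j\mid j\in J_k\}$. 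Since $w(J_k)=1$ by axiom (3), $w_{J_k}(J)=w(J)/w(J_k)=w(J)$ for $J\in D_{J_k}$, so the generators of $R_{\mathbb D_{J_k}}$ are literally the generators of $R_{\mathbb D}$ indexed by $D_{J_k}$. This gives a natural $\mathbb C$-algebra map $R_{\mathbb D_{J_1}}\otimes_{\mathbb C}\cdots\otimes_{\mathbb C}R_{\mathbb D_{J_m}}\to R_{\mathbb D}$; it is surjective because the $D_{J_k}$ cover $D$, and injective because the three groups of variables are disjoint, so a monomial relation in $R_{\mathbb D}$ must already be a relation within a single block. This proves (1).

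For (2), the same disjointness of the blocks $J_k$ shows that each generating matrix $(\zeta_w,\zeta_w^{-1};i,j)$ of $G_{\mathbb D}$ has its two nontrivial entries in a single block $J_k$ (because $i\in J_1'\subset J_k$ and $j\in J_2'\subset J_k$ for the same $J\in D$ with $J_1',J_2'\sqsubset J$), and conversely every generator of $G_{\mathbb D_{J_k}}$ arises this way. Thus $G_{\mathbb D}$ is the internal direct product of the commuting subgroups $G_{\mathbb D_{J_k}}$ acting on disjoint sets of coordinates, whence $|G_{\mathbb D}|=\prod_k|G_{\mathbb D_{J_k}}|$. (Alternatively one can invoke Proposition \ref{Watanabe invariant ring} together with (1): the product of the spectra of the $R_{\mathbb D_{J_k}}$ is the quotient of $\mathbb A^n$ by the product group, and the order of the group equals the degree of the quotient map, which is multiplicative.)

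Items (3) and (4) are then purely local-algebra statements about the tensor product decomposition in (1), localized at the homogeneous maximal ideal. For (4): the embedding dimension of a graded ring at its irrelevant maximal ideal is the minimal number of homogeneous generators, and $\mathrm{emb}(R_{\mathbb D})=|D|=\sum_k|D_{J_k}|=\sum_k\mathrm{emb}(R_{\mathbb D_{J_k}})$ by Proposition \ref{embedding dimension}; equivalently, the cotangent space of a tensor product of graded $\mathbb C$-algebras is the direct sum of the cotangent spaces. For (3): for the Hilbert–Samuel multiplicity at the irrelevant ideal one uses that the Hilbert series of a tensor product is the product of the Hilbert series, so the leading term of the Hilbert polynomial — and hence the product $e\cdot(\dim)!^{-1}$ normalization — multiplies; concretely, if $R_{\mathbb D_{J_k}}$ has Hilbert series $H_k(t)=Q_k(t)/(1-t)^{n_k}$ with $Q_k(1)=e(R_{\mathbb D_{J_k}})$ and $n_k=|J_k|=\dim R_{\mathbb D_{J_k}}$, then $R_{\mathbb D}$ has Hilbert series $\prod_k H_k(t)=\prod_k Q_k(t)/(1-t)^{\sum n_k}$ with $\sum_k n_k=n=\dim R_{\mathbb D}$, so $e(R_{\mathbb D})=\prod_k Q_k(1)=\prod_k e(R_{\mathbb D_{J_k}})$. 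The only mild subtlety — the step I would be most careful about — is that the multiplicity here is taken with respect to the maximal ideal of the \emph{localization} $(R_{\mathbb D})_{\mathfrak m_{\mathbb D}}$ rather than with respect to $\mathfrak m_{\mathbb D}$ in the graded ring, but since $R_{\mathbb D}$ is a finitely generated graded $\mathbb C$-algebra generated in positive degrees, $e((R_{\mathbb D})_{\mathfrak m_{\mathbb D}})$ equals the graded multiplicity computed from the Hilbert series, so the two notions agree and the computation above applies verbatim.
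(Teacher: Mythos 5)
The paper itself gives no argument for this proposition (it is introduced as ``immediate from the definition''), so there is no official proof to compare against. Your treatment of (1), (2) and (4) is correct and is surely the intended one: by axiom (2) the maximal elements partition $\{1,\dots,n\}$, every $J\in D$ lies in exactly one block, the generators of the ring and of the group split accordingly, and cotangent spaces add. The problem is your justification of (3). The ring $R_{\mathbb D}$ is \emph{not} standard graded: its generators $x_J^{w(J)}$ have degrees $|J|\,w(J)$, which are in general distinct, and for a non-standard $\mathbb N$-graded $\mathbb C$-algebra the Hilbert--Samuel multiplicity of the irrelevant maximal ideal is \emph{not} read off from the Hilbert series of that grading: $\ell(R/\mathfrak m^k)$ is not $\sum_{d<k}\dim R_d$, and the series need not even have the form $Q(t)/(1-t)^n$ with $Q$ a polynomial. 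A counterexample inside the class at hand: take $D=\{\{1\},\{2\},\{1,2\}\}$ with $w(\{1\})=w(\{2\})=3$ and $w(\{1,2\})=1$, so that $R_{\mathbb D}=\mathbb C[x_1^3,x_2^3,x_1x_2]\cong\mathbb C[u,v,Y]/(Y^3-uv)$. Here $e(R_{\mathbb D})=\min\{3,2\}=2$, whereas $\dim(R_{\mathbb D})_d$ grows like $d/3$, so the ``multiplicity'' extracted from the natural Hilbert series is $1/3$. Thus the last sentence of your argument (``the two notions agree'') is false, and with it the Hilbert-series computation of (3).

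The conclusion of (3) is nevertheless true, and there are two standard repairs. (a) Pass to the associated graded ring: since $\mathfrak m_{\mathbb D}^k=\sum_{i_1+\cdots+i_m=k}\mathfrak m_{\mathbb D_{J_1}}^{i_1}\cdots\mathfrak m_{\mathbb D_{J_m}}^{i_m}$, one gets $\mathrm{gr}_{\mathfrak m_{\mathbb D}}(R_{\mathbb D})\cong\mathrm{gr}_{\mathfrak m_{\mathbb D_{J_1}}}(R_{\mathbb D_{J_1}})\otimes_{\mathbb C}\cdots\otimes_{\mathbb C}\mathrm{gr}_{\mathfrak m_{\mathbb D_{J_m}}}(R_{\mathbb D_{J_m}})$; these graded rings \emph{are} generated in degree one, so your Hilbert-series argument applies to them verbatim and yields $e(R_{\mathbb D})=\prod_k e(R_{\mathbb D_{J_k}})$. (b) Use that each $R_{\mathbb D_{J_k}}$ is a complete intersection (Proposition \ref{Watanabe invariant ring}), hence Cohen--Macaulay: choose a parameter ideal $I_k$ which is a minimal reduction of the maximal ideal of $(R_{\mathbb D_{J_k}})_{\mathfrak m_{\mathbb D_{J_k}}}$; then $I_1+\cdots+I_m$ is a parameter ideal and a reduction of $\mathfrak m_{\mathbb D}$, so
$$e(R_{\mathbb D})=\ell\bigl(R_{\mathbb D}/(I_1+\cdots+I_m)\bigr)=\prod_k\ell\bigl(R_{\mathbb D_{J_k}}/I_k\bigr)=\prod_k e(R_{\mathbb D_{J_k}}),$$
the middle equality coming from the tensor decomposition in (1). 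Route (b) is exactly the style of argument the paper uses later, e.g.\ in Proposition \ref{lct=lct}.
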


\begin{lem}\label{a times function}
Let $\mathbb D=(D,w)$ be an $n$-dimensional connected special datum and   $J\in D$ be the maximal element of $D$.
Let $a$ be a natural number and $w^a: D\to\mathbb N$ be the function such that 
\begin{equation*}
w^{a}(J')  =
\begin{cases}
1  & \text{if $J'=J$,}\\
aw(J') & \text{if $J'\in D\setminus J$.}
\end{cases}
\end{equation*}
Then $\mathbb D^{a}=(D, w^{a})$   is a special datum with  $|G_{\mathbb D^{a}}|=a^{n-1}|G_{\mathbb D}|.$

\end{lem}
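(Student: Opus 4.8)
The plan is to verify the two axioms that need checking — that $\mathbb{D}^a = (D, w^a)$ is a special datum, and that $|G_{\mathbb{D}^a}| = a^{n-1}|G_{\mathbb{D}}|$ — and treat the datum axioms essentially by inspection, reserving the order computation for a short combinatorial argument. First I would check that $\mathbb{D}^a$ is a special datum. Axiom (1) is unchanged since $D$ is unchanged, and axiom (2) depends only on the set $D$, so it holds automatically. For axiom (3): the maximal element of the connected datum $\mathbb{D}$ is the unique $J$, and $w^a(J) = 1$ by definition, so this holds. For axiom (4): if $J' \subsetneq J''$ in $D$, then either both are in $D \setminus J$ — in which case $w^a(J') = a w(J') > a w(J'') = w^a(J'')$ and $w^a(J'') \mid w^a(J')$ follows from $w(J'') \mid w(J')$ — or $J'' = J$, in which case $w^a(J'') = 1$ divides everything and $w^a(J') = a w(J') \ge a \cdot 1 > 1$ (using $w(J') \ge 1$ and $J' \ne J$ forces $w(J') > w(J) = 1$, hence $a w(J') > 1$). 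Axiom (5) compares $w^a(J_1)$ and $w^a(J_2)$ for $J_1, J_2 \sqsubset J'''$: if $J''' \ne J$ both values are $a$ times the old ones, which were equal; if $J''' = J$ both values are $a \cdot w(J_i) = a \cdot 1 = a$ (since the immediate predecessors of the maximal element $J$ have weight... no — rather, $w(J_1) = w(J_2)$ by axiom (5) for $\mathbb{D}$, so $a w(J_1) = a w(J_2)$). So $\mathbb{D}^a$ is a special datum.

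For the order formula, I would use the iterated structure coming from Lemma \ref{reduce lemma}. Watanabe's description (Proposition \ref{Watanabe invariant ring}) gives $R_{\mathbb{D}} = \mathbb{C}[x_1,\dots,x_n]^{G_{\mathbb{D}}}$, and since $G_{\mathbb{D}}$ acts freely in codimension one (it lies in $\mathrm{SL}$ and the quotient is Gorenstein/canonical), the order $|G_{\mathbb{D}}|$ equals the degree of the field extension $\mathbb{C}(x_1,\dots,x_n) / \mathrm{Frac}(R_{\mathbb{D}})$. Applying Lemma \ref{reduce lemma} repeatedly peels $\mathbb{D}$ down to a discrete datum (only singletons), and at each peeling of a maximal $J = J_1 \cup \dots \cup J_m$ one multiplies the field degree by $w(J_i)$; iterating, $|G_{\mathbb{D}}|$ is a product over the "edges" of the graph of appropriate weight ratios. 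Concretely I would show by induction on $|D|$ that $|G_{\mathbb{D}}| = \prod_{J \sqsubset J'} \big(w(J)/w(J')\big)^{|J| - 1}$... actually, the cleanest route: prove directly that $|G_{\mathbb{D}}| = \prod_{J \in D,\, |J| \ge 2}\big(\text{something}\big)$ by the recursion $|G_{\mathbb{D}}| = \big(\prod_i w(J_i)^{|J_i|}\big)/w(J) \cdot \prod_i |G_{\mathbb{D}_{J_i}}|$ read off from Lemma \ref{reduce lemma}, but I only need the comparison between $\mathbb{D}$ and $\mathbb{D}^a$, so I can be lazier.

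The economical argument, and the one I would actually write: induct on $|D|$. If $D = \{\{1\},\dots,\{n\}\}$ (the discrete case) then $n = |J| = 1$ by connectedness, $G_{\mathbb{D}}$ is trivial, and $a^{n-1} = a^0 = 1$, so the identity holds. Otherwise let $J_1, \dots, J_m \sqsubset J$ be the immediate predecessors of the maximal element, with common weight $d := w(J_1) = \dots = w(J_m)$ (axiom (5)), so $|J| = |J_1| + \dots + |J_m|$. By Lemma \ref{reduce lemma} and the multiplicativity in Proposition \ref{non-connected}(2) applied to $\mathbb{D} \setminus J$ (whose connected components are $\mathbb{D}_{J_1}, \dots, \mathbb{D}_{J_m}$ after dividing weights by $d$), together with the field-degree computation for the extension $R_{\mathbb{D}} \hookrightarrow R_{\mathbb{D}\setminus J}$ of degree $d^{m-1}$... wait, the extension adjoins one variable $Y$ with $Y^{d} = x_{J_1}\cdots x_{J_m}$ over a ring already containing $x_{J_1}^{d}, \dots, x_{J_m}^{d}$, contributing a factor; unwinding, one gets $|G_{\mathbb{D}}| = d^{\,|J|-1} \prod_{i=1}^m |G_{\mathbb{D}_{J_i}}|$. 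Now for $\mathbb{D}^a$: its maximal element is still $J$ with the same immediate predecessors, but now of common weight $a d$, and its "peeled" datum has connected components which are exactly $(\mathbb{D}_{J_i})^{a}$ in the notation of this lemma — here the induction hypothesis gives $|G_{(\mathbb{D}_{J_i})^a}| = a^{|J_i|-1}|G_{\mathbb{D}_{J_i}}|$. Hence $|G_{\mathbb{D}^a}| = (ad)^{|J|-1}\prod_i a^{|J_i|-1}|G_{\mathbb{D}_{J_i}}| = a^{|J|-1} \cdot a^{\sum_i(|J_i|-1)} \cdot d^{|J|-1}\prod_i|G_{\mathbb{D}_{J_i}}| = a^{|J|-1+ |J| - m} \cdot |G_{\mathbb{D}}|$. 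The exponent is $2|J| - m - 1 = 2n - m - 1$, which is $n - 1$ only when $n = m$... so I have mis-tracked a factor. The main obstacle, and where I would be most careful, is exactly this bookkeeping of exponents: the correct recursion must be $|G_{\mathbb{D}}| = d^{\,|J| - m}\prod_i |G_{\mathbb{D}_{J_i}}|$ (the extension degree over each step is $d$ per "merge" and there are $|J| - m$ additional singleton-index identifications being forced), and then $|G_{\mathbb{D}^a}| = (ad)^{|J|-m}\prod_i a^{|J_i|-1}|G_{\mathbb{D}_{J_i}}| = a^{(|J|-m) + \sum_i(|J_i|-1)}|G_{\mathbb{D}}| = a^{(|J|-m)+(|J|-m)}\cdots$ — still not obviously $n-1$ without using connectedness of each $\mathbb{D}_{J_i}$. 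I would pin down the recursion by testing it against the two worked Examples in the text, then run the induction; once the recursion is correct the factor $a^{n-1}$ drops out cleanly because $\sum_i (|J_i| - 1) + (\text{edges into } J) = \sum_i(|J_i|-1) + m$ telescopes over the whole tree to $n - 1$.
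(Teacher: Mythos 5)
Your verification that $\mathbb D^{a}$ satisfies the five axioms is fine (the paper dismisses this step as clear), but the order formula---the actual content of the lemma---is not established by your argument, and you concede as much. The decisive error is the identification of the connected components of the peeled datum of $\mathbb D^{a}$ with $(\mathbb D_{J_i})^{a}$: in $\mathbb D^{a}$ the immediate predecessors $J_1,\dots,J_m$ of $J$ carry weight $ad$, where $d=w(J_1)=\cdots=w(J_m)$, so the peeling divides each weight $aw(J')$ by $ad$ and the factor $a$ cancels; thus $(\mathbb D^{a})\setminus J$ is literally the same datum as $\mathbb D\setminus J$, with components $\mathbb D_{J_i}$ rather than $(\mathbb D_{J_i})^{a}$. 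This is exactly where your spurious factor $a^{\sum_i(|J_i|-1)}=a^{\,n-m}$ enters. Neither of the two recursions you then float ($d^{\,|J|-1}$ versus $d^{\,|J|-m}$) is ever justified, and the proposal ends with a plan to ``test against the examples and then run the induction,'' which is not a proof.

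The paper's argument avoids field degrees altogether and works with the explicit generators of $G_{\mathbb D}$, running the induction through an auxiliary rescaled datum. For a parameter $b$ it forms $(\mathbb D\setminus J)^{b}$, obtained from $\mathbb D\setminus J$ by multiplying every non-maximal weight by $b$; its connected components are $(\mathbb D_{J_i})^{b}$ of dimensions $n_i<n$, so the induction hypothesis plus Proposition \ref{non-connected}(2) give $|G_{(\mathbb D\setminus J)^{b}}|=b^{\,n-m}|G_{\mathbb D\setminus J}|$. Reading off the definition of $G_{\mathbb D}$, that group is generated by $G_{(\mathbb D\setminus J)^{d}}$ together with the $m-1$ elements $(\zeta_{d},\zeta_{d}^{-1};i_1,i_j)$, whence $|G_{\mathbb D}|=d^{\,m-1}|G_{(\mathbb D\setminus J)^{d}}|=d^{\,n-1}|G_{\mathbb D\setminus J}|$, and the identical computation with $b=ad$ gives $|G_{\mathbb D^{a}}|=(ad)^{\,n-1}|G_{\mathbb D\setminus J}|=a^{n-1}|G_{\mathbb D}|$. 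If you want to keep your scheme, the correct recursion is $|G_{\mathbb D}|=d^{\,n-1}\prod_i|G_{\mathbb D_{J_i}}|$ with the \emph{same} components appearing for $\mathbb D$ and for $\mathbb D^{a}$, from which $a^{n-1}$ drops out at once; but that recursion is Lemma \ref{group reduction}, which the paper proves \emph{after} and \emph{using} the present lemma, so you would have to prove it independently or follow the paper's route.
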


\begin{proof}
It is clear that  $\mathbb D^a$ is a special datum.
We prove $|G_{\mathbb D^{a}}|=a^{n-1}|G_{\mathbb D}|$ by induction on the dimension of $\mathbb D=(D,w)$.
If the dimension of $R_{\mathbb D}$ is $1$, then $\mathbb D=\mathbb D^{a}$ and $|G_{\mathbb D}|=|G_{\mathbb D^{a}}|$.
Therefore the lemma holds when $\mathrm{dim}R_{\mathbb D}=1$.
Now suppose that  $n\ge 2$ and this lemma holds for any special datum  of dimension at most $n-1$.

Let $J_1,\dots,J_m$ be the elements of $D$ such that $J=J_1\cup\dots\cup J_m$ and
$J_i\sqsubset  J$ for $i=1,\dots,m$.
Let $\mathbb D\setminus J=(D\setminus J,v)$,  $b\in\mathbb N$ and
$v^{b}: D\setminus J\to\mathbb N$ be the function such that 
\begin{equation*}
v^{b}(J')  =
\begin{cases}
1  & \text{if $J'=J_i$,}\\
bv(J') & \text{if $J'\in D\setminus \{J,J_1,\dots,J_m\}$.}
\end{cases}
\end{equation*}
Let $(\mathbb D\setminus J)^{b}=(D\setminus J,v^{b})$.
Note that $(\mathbb D\setminus J)^{1}=\mathbb D\setminus J$ and the dimension of a connected component of $\mathbb D\setminus J$ is less than $n$.
Therefore by the induction hypothesis and Proposition \ref{non-connected},
  $(\mathbb D\setminus J)^{b}$ is a special datum with  $$|G_{(\mathbb D\setminus J)^{b}}|=b^{n-m}|G_{{\mathbb D\setminus J}}|.$$
Choose $i_j\in J_j$ for $j=1,\dots,m$.
By the definition of $G_{\mathbb D}$, we have
\[ G_{\mathbb D}= \Braket{\Set{(\zeta_{w(J_1)},\zeta_{w(J_1)^{-1}};i_1,i_j) |
\begin{array}{l}
\text{$j=2,\dots,m$}
\end{array}},G_{({\mathbb D\setminus J})^{w(J_1)}}   }\ \mbox{and}\]
\[ G_{\mathbb D^{a}}= \Braket{\Set{(\zeta_{aw(J_1)},\zeta_{aw(J_1)^{-1}};i_1,i_j) |
\begin{array}{l}
\text{$j=2,\dots,m$}
\end{array}},G_{({\mathbb D\setminus J})^{aw(J_1)}}   }.\]

Therefore 
 $$|G_{\mathbb D^{a}}|=(aw(J_1))^{m-1}|G_{({\mathbb D\setminus J})^{aw(J_1)}}|=a^{n-1}w(J_1)^{m-1}|G_{({\mathbb D\setminus J})^{w(J_1)}}|=a^{n-1}|G_{\mathbb D}|.$$

\end{proof}

\begin{lem}\label{group reduction}
Let $n\ge 2$,  $\mathbb D=(D,w)$ be an $n$-dimensional connected special datum,   $J\in D$ be the maximal element of $D$ and
$J'$ be a maximal element of $D\setminus J$.
Then
$$|G_{\mathbb D}|=w(J')^{n-1}|G_{\mathbb D\setminus J}|.$$
\end{lem}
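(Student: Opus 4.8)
The plan is to read off the identity from two computations that already appear inside the proof of Lemma~\ref{a times function}, after first pinning down the combinatorics of $\mathbb D\setminus J$. Since $\mathbb D$ is connected and $n$-dimensional with $n\ge 2$, its (unique) maximal element is $J=\{1,\dots,n\}$: every singleton $\{i\}$ lies in $D$ by condition~(1) of the definition of a special datum, and by condition~(2) together with the uniqueness of the maximal element it can neither be disjoint from $J$ nor strictly contain $J$, so $\{i\}\subset J$. Let $J_1,\dots,J_m\in D$ be the elements with $J_i\sqsubset J$ and $J=J_1\cup\dots\cup J_m$; condition~(2) makes them pairwise disjoint, so they partition $\{1,\dots,n\}$, whence $\sum_{i=1}^m|J_i|=n$ and $m\ge 2$. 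Every element of $D\setminus J$ lies below one of the $J_i$ along a $\sqsubset$-chain, so the $J_i$ are exactly the maximal elements of $D\setminus J$; hence the given maximal element $J'$ of $D\setminus J$ equals some $J_i$, and condition~(5) applied to the pair $J_1\sqsubset J$, $J'\sqsubset J$ gives $w(J')=w(J_1)=:w_0$.

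Next I would invoke the proof of Lemma~\ref{a times function}. Because $w(J)=1$ by condition~(3), we have $\mathbb D^1=\mathbb D$, so the identity $|G_{\mathbb D^a}|=(a\,w_0)^{m-1}\,|G_{(\mathbb D\setminus J)^{a w_0}}|$ established there, taken at $a=1$, reads $|G_{\mathbb D}|=w_0^{\,m-1}\,|G_{(\mathbb D\setminus J)^{w_0}}|$. That same proof also shows, for every $b\in\mathbb N$, that $|G_{(\mathbb D\setminus J)^{b}}|=b^{\,n-m}\,|G_{\mathbb D\setminus J}|$ (this can also be obtained directly: the connected component of $(\mathbb D\setminus J)^{b}$ corresponding to $J_i$ is $(\mathbb D_{J_i})^{b}$, so Lemma~\ref{a times function} gives $|G_{(\mathbb D_{J_i})^{b}}|=b^{\,|J_i|-1}|G_{\mathbb D_{J_i}}|$, and one multiplies using Proposition~\ref{non-connected} and $\sum_i|J_i|=n$). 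Taking $b=w_0$ yields $|G_{(\mathbb D\setminus J)^{w_0}}|=w_0^{\,n-m}\,|G_{\mathbb D\setminus J}|$, and combining the two displays,
$$|G_{\mathbb D}|=w_0^{\,m-1}\cdot w_0^{\,n-m}\,|G_{\mathbb D\setminus J}|=w_0^{\,n-1}\,|G_{\mathbb D\setminus J}|=w(J')^{\,n-1}\,|G_{\mathbb D\setminus J}|,$$
which is the assertion.

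The step requiring the most care is the first identity, $|G_{\mathbb D}|=w_0^{\,m-1}\,|G_{(\mathbb D\setminus J)^{w_0}}|$. It amounts to exhibiting $G_{\mathbb D}$ as the internal product (commuting, since all matrices are diagonal) of $G_{(\mathbb D\setminus J)^{w_0}}$ with the subgroup $H$ generated by the $m-1$ matrices $(\zeta_{w_0},\zeta_{w_0}^{-1};i_1,i_j)$, $j=2,\dots,m$ (for fixed choices $i_j\in J_j$), which satisfies $|H|=w_0^{\,m-1}$, and to checking $H\cap G_{(\mathbb D\setminus J)^{w_0}}=\{E\}$. If one prefers to argue this directly instead of quoting the proof of Lemma~\ref{a times function}, the triviality of the intersection follows by observing that every element of $G_{(\mathbb D\setminus J)^{w_0}}$ has diagonal entries whose product over each block $J_i$ equals $1$, whereas a nontrivial element of $H$ fails this on some block; the remaining steps are routine bookkeeping with $\mathbb D\setminus J$, with the twisted data $(\mathbb D\setminus J)^{b}$ of that proof, and with the identification $(\mathbb D\setminus J)_{J_i}=\mathbb D_{J_i}$.
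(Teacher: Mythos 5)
Your proof is correct and follows essentially the same route as the paper: both reduce to the two identities $|G_{\mathbb D}|=w(J_1)^{m-1}|G_{(\mathbb D\setminus J)^{w(J_1)}}|$ and $|G_{(\mathbb D\setminus J)^{w(J_1)}}|=w(J_1)^{n-m}|G_{\mathbb D\setminus J}|$, the latter obtained from Lemma \ref{a times function} applied componentwise via Proposition \ref{non-connected}. Your extra verification that $H\cap G_{(\mathbb D\setminus J)^{w_0}}=\{E\}$ is a welcome detail the paper leaves implicit, but it does not change the argument.
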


\begin{proof}
Let $J_1,\dots,J_m$ be the elements of $D$ such that $J=J_1\cup\dots\cup J_m$ and
$J_i\sqsubset  J$ for $i=1,\dots,m$.
Note that $w(J')=w(J_1)=\cdots=w(J_m)$.
Let $v^{w(J_1)}: D\setminus J\to\mathbb N$ be the function such that 
\begin{equation*}
v^{w(J_1)}(J')  =
\begin{cases}
1  & \text{if $J'=J_i$,}\\
w(J_1)v(J') & \text{if $J'\in D\setminus \{J,J_1,\dots,J_m\}$.}
\end{cases}
\end{equation*}
Let $(\mathbb D\setminus J)^{w(J_1)}=(D\setminus J,v^{w(J_1)})$.
By Lemma \ref{a times function}, $(\mathbb D\setminus J)^{w(J_1)}$ is a special datum with
$|G_{(\mathbb D\setminus J)^{w(J_1)}}|={w(J_1)}^{n-m}|G_{{\mathbb D\setminus J}}|.$
Choose $i_j\in J_j$ for $j=1,\dots,m$.
By the definition of $G_{\mathbb D}$, we have
\[ G_{\mathbb D}= \Braket{\Set{(\zeta_{w(J_1)},\zeta_{w(J_1)^{-1}};i_1,i_j) |
\begin{array}{l}
\text{$j=2,\dots,m$}
\end{array}},G_{({\mathbb D\setminus J})^{w(J_1)}}}.\]
Therefore
 $$|G_{\mathbb D}|={w(J_1)}^{m-1}|G_{({\mathbb D\setminus J})^{w(J_1)}}|={w(J_1)}^{n-1}|G_{\mathbb D\setminus J}|.$$
\end{proof}

We recall the definitions of singularities of pairs and the log canonical threshold.
Let $X$ be a $\mathbb Q$-Gorenstein normal variety over $\mathbb C$,
$\mathfrak a \subset \mathcal O_X$ an ideal sheaf  and $t\ge0$ a real number.  
Let $f:Y\to X$ be a resolution of singularities such that the ideal sheaf $\mathfrak a\mathcal O_Y=\mathcal O_Y(-F)$ is invertible and $\mathrm{Supp}F\cup\mathrm{Exc}(f)$ is a simple normal crossing divisor, where $\mathrm{Exc}(f)$ is the exceptional locus of $f$.
Let $K_X$ and $K_Y$ denote the canonical divisors of $X$ and $Y$, respectively.
Then there are finitely  many irreducible
(not necessarily exceptional) divisors $E_i$ on $Y$ and real numbers $a(E_i;X,\mathfrak a^t)$ so that there exists an
$\mathbb Q$-linear equivalence of $\mathbb R$-divisors
$$K_Y-f^*K_X-tF=\sum_ia(E_i;X,\mathfrak a^t)E_i.$$

\begin{defn}
Under the notation as above:
\begin{enumerate}
\item We say that the pair $(X, \mathfrak a^t)$ is canonical if $a(E_i;X,\mathfrak a^t)\ge 0$ for all exceptional $E_i$.
\item We say that the pair $(X, \mathfrak a^t)$ is log canonical if $a(E_i;X,\mathfrak a^t)\ge-1$ for all $E_i$.
\item  Suppose that $(X, \mathcal O_X)$ is log canonical.  Then we
define the log canonical threshold of $\mathfrak a$ to be
$$\mathrm{lct}(X,\mathfrak a)=\mathrm{sup}\{t\in\mathbb R_{\ge 0}|\ \mathrm{the\ pair}\ (X,\mathfrak a^t)\ \mathrm{is\ log\ canonical}\}.$$
If there is not risk of confusion, we shall simply write $\mathrm{lct}(\mathfrak a)$ instead of $\mathrm{lct}(X,\mathfrak a)$ and $\mathrm{lct}(R,\mathfrak a)$ instead of $\mathrm{lct}(\mathrm{Spec}R,\mathfrak a)$ for a ring $R$.
\item We define the multiplier ideal of $\mathfrak a$ with coefficient $t$ to be $$\mathcal J(X,\mathfrak a^t)=f_*(\lceil\sum_ia(E_i;X,\mathfrak a^t)E_i\rceil).$$
\end{enumerate}

\end{defn}

\begin{rem}
For a special datum $\mathbb D$, $(\mathrm{Spec}R_{\mathbb D}, \mathcal O_{\mathrm{Spec}R_{\mathbb D}})$ is canonical by Proposition 5.20 and Corollary 5.24 in \cite{KM}. 
\end{rem}

\begin{defn}
Let $\mathfrak a\subset \mathbb C[x_1,\dots,x_n]$ be a monomial ideal.
The Newton Polygon $\mathrm{Newt}(\mathfrak a)$ of $\mathfrak a$ is defined to be the covex hull of $\{(a_1,\dots,a_n)\in \mathbb Z^n|\ x_1^{a_1}\cdots x_n^{a_n}\in \mathfrak a\}$ in $\mathbb R^n$.

\end{defn}

Howald gave a formula computing the multiplier ideal of a monomial ideal  in \cite{How}.
\begin{thm}$\mathrm(${\rm Main Theorem  in \cite{How}}$\mathrm)$\label{toric multiplier ideal}
Let $X=\mathrm{Spec} \mathbb C[x_1,\dots,x_n]$ and $\mathfrak a\subset \mathcal O_X$ be a monomial ideal.
Then for $t\in \mathbb R_{>0}$,
$$\mathcal J(X,\mathfrak a^t)=\langle x^m|\ m+(1,\dots,1)\in \mathrm{interior}\ \mathrm{of}  \ t\mathrm{Newt}(\mathfrak a)\rangle,$$
where $t\mathrm{Newt}(\mathfrak a)=\{t{\bf a}\in\mathbb R^n|\ {\bf a}\in \mathrm{Newt}(\mathfrak a)\}$. 
\end{thm}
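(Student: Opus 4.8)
The approach I have in mind is the classical toric one: resolve $\mathfrak a$ by a toric modification of $X=\mathbb A^n$, so that every ingredient of the multiplier ideal becomes a linear function of the rays of a fan, and then read the answer off the facet description of $t\mathrm{Newt}(\mathfrak a)$. Throughout set $\mathbf 1=(1,\dots,1)$ and, for $v\in\mathbb R^n_{\ge0}$, put $\mathrm{ord}_v(\mathfrak a)=\min\{\langle v,u\rangle\mid x^u\in\mathfrak a\}$; since $\mathrm{Newt}(\mathfrak a)$ is stable under adding $\mathbb R^n_{\ge0}$ and $v\ge0$, this minimum is attained and coincides with $\min\{\langle v,u\rangle\mid u\in\mathrm{Newt}(\mathfrak a)\}$, the value at $v$ of the support function of $\mathrm{Newt}(\mathfrak a)$.

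First I would fix a convenient resolution. Let $\Sigma$ be a smooth refinement of the normal fan of the polyhedron $\mathrm{Newt}(\mathfrak a)$ (a complete fan on the cone $\mathbb R^n_{\ge0}$, whose toric variety is $X$); such a $\Sigma$ exists by toric resolution of singularities, and being a refinement it retains every ray of the normal fan — in particular every primitive inward facet normal of $\mathrm{Newt}(\mathfrak a)$ — as well as the coordinate rays $\mathbb R_{\ge0}e_i$. On $Y:=X_\Sigma$ the ideal $\mathfrak a$ becomes invertible (the function $v\mapsto\mathrm{ord}_v(\mathfrak a)$ is linear on each cone of $\Sigma$), and $f\colon Y\to X$ is a log resolution of $(X,\mathfrak a)$, since $Y$ is smooth and the union of its torus-invariant divisors is simple normal crossing. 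As $\mathcal J(X,\mathfrak a^t)$ does not depend on the chosen log resolution, it may be computed on this $Y$.

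Next I would insert the toric dictionary. For a ray $\rho$ of $\Sigma$ let $v_\rho\in\mathbb Z^n_{\ge0}$ be its primitive generator and $E_\rho$ the associated prime divisor; the divisors of the coordinate rays $\mathbb R_{\ge0}e_i$ are the strict transforms of $\{x_i=0\}$. Two standard toric computations give $K_Y-f^*K_X=\sum_\rho(\langle v_\rho,\mathbf 1\rangle-1)E_\rho$ and $\mathfrak a\mathcal O_Y=\mathcal O_Y(-F)$ with $F=\sum_\rho\mathrm{ord}_{v_\rho}(\mathfrak a)\,E_\rho$. Thus $a(E_\rho;X,\mathfrak a^t)=\langle v_\rho,\mathbf 1\rangle-1-t\,\mathrm{ord}_{v_\rho}(\mathfrak a)$, and, since $\langle v_\rho,\mathbf 1\rangle\in\mathbb Z$,
\[
\mathcal J(X,\mathfrak a^t)=f_*\mathcal O_Y\left(\sum_\rho\left(\langle v_\rho,\mathbf 1\rangle-1-\lfloor t\,\mathrm{ord}_{v_\rho}(\mathfrak a)\rfloor\right)E_\rho\right).
\]
This is a torus-invariant ideal sheaf, hence a monomial ideal, and a monomial $x^m$ lies in it exactly when $\langle v_\rho,m\rangle+\langle v_\rho,\mathbf 1\rangle-1-\lfloor t\,\mathrm{ord}_{v_\rho}(\mathfrak a)\rfloor\ge0$ for every ray $\rho$; because $\langle v_\rho,m+\mathbf 1\rangle$ is an integer, this says precisely $\langle v_\rho,m+\mathbf 1\rangle>t\,\mathrm{ord}_{v_\rho}(\mathfrak a)=\min\{\langle v_\rho,u\rangle\mid u\in t\mathrm{Newt}(\mathfrak a)\}$ for every ray $\rho$.

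It then remains to match this system of strict inequalities with the condition that $m+\mathbf 1$ lie in the interior of $t\mathrm{Newt}(\mathfrak a)$. If $p:=m+\mathbf 1$ is interior, then for every nonzero $v$ a small ball about $p$ contained in $t\mathrm{Newt}(\mathfrak a)$ forces $\langle v,p\rangle>\min\{\langle v,u\rangle\mid u\in t\mathrm{Newt}(\mathfrak a)\}$, so all the inequalities hold. Conversely, if they hold for all rays of $\Sigma$, then in particular they hold for all primitive facet normals of $t\mathrm{Newt}(\mathfrak a)$ — which coincide with those of $\mathrm{Newt}(\mathfrak a)$ and hence are rays of $\Sigma$ — and a point of a full-dimensional polyhedron satisfying all of its facet inequalities strictly lies in the interior. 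Combining the two displays proves the theorem. I expect the only real friction to be bookkeeping: confirming that the smooth fan genuinely contains all the facet normals (needed for the converse) and propagating the floor/ceiling correctly through the integrality of $\langle v_\rho,m+\mathbf 1\rangle$; the geometric inputs — toric resolution of singularities and the toric formulas for $K_{Y/X}$, for $\mathfrak a\mathcal O_Y$, and for the sections of an invariant divisorial sheaf — are entirely standard.
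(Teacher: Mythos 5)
The paper does not prove this statement; it is imported verbatim as the Main Theorem of Howald's paper \cite{How}, so there is no internal proof to compare against. Your argument is correct and is essentially Howald's own: a log resolution by a smooth refinement of the normal fan of $\mathrm{Newt}(\mathfrak a)$, the toric formulas for $K_{Y/X}$ and $\mathfrak a\mathcal O_Y$ in terms of the primitive ray generators, and the identification of the resulting strict integral inequalities with membership of $m+(1,\dots,1)$ in the interior of $t\mathrm{Newt}(\mathfrak a)$ via the facet normals. The two points you flag as friction are indeed the only ones needing care, and both are standard: a refinement of a fan necessarily contains every ray of the original fan (in particular all facet normals of $\mathrm{Newt}(\mathfrak a)$ and the coordinate rays), and the ceiling converts to a floor because $\langle v_\rho,m+\mathbf 1\rangle$ is an integer.
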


The following corollary is an immediate consequence of the definition of a log canonical threshold and Theorem\ref{toric multiplier ideal}.
\begin{cor}$\mathrm(${\rm Example 5  in \cite{How}}$\mathrm)$\label{toric multiplier ideal log canonical threshold}
Let $X=\mathrm{Spec} \mathbb C[x_1,\dots,x_n]$ and $\mathfrak a\subset \mathcal O_{X}$ be a monomial ideal.
Then
$$\mathrm{lct}(X,\mathfrak a)=\mathrm{sup}\{t\in\mathbb R_{>0}|\ (1,\dots,1)\in t\mathrm{Newt}(\mathfrak a)\}.$$ 
\end{cor}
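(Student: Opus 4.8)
The plan is to read the formula off from the definition of $\mathrm{lct}$ combined with Howald's description of the multiplier ideals (Theorem \ref{toric multiplier ideal}); since $X=\mathrm{Spec}\,\mathbb C[x_1,\dots,x_n]$ is smooth, the pair $(X,\mathcal O_X)$ is log canonical, so everything in sight is defined. Fix a log resolution $f\colon Y\to X$ as in the definition of the multiplier ideal and write $K_Y-f^*K_X=\sum_i k_iE_i$ (with $k_i\ge 0$, since $X$ is smooth) and $F=\sum_i c_iE_i$ (with $c_i\ge 0$, since $F$ is effective), so that $a(E_i;X,\mathfrak a^t)=k_i-tc_i$ is affine and non-increasing in $t$.

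The first step is the standard reformulation
$$\mathrm{lct}(X,\mathfrak a)=\sup\{\,t\in\mathbb R_{>0}\mid \mathcal J(X,\mathfrak a^t)=\mathcal O_X\,\}.$$
Indeed, $\mathcal J(X,\mathfrak a^t)$ contains $1$ — equivalently equals $\mathcal O_X$ — exactly when the divisor $\lceil\sum_i a(E_i;X,\mathfrak a^t)E_i\rceil$ is effective, i.e. $a(E_i;X,\mathfrak a^t)>-1$ for all $i$, i.e. $tc_i<k_i+1$ for all $i$ with $c_i>0$; whereas $(X,\mathfrak a^t)$ is log canonical exactly when $tc_i\le k_i+1$ for all such $i$. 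Both conditions have the same threshold $\min\{(k_i+1)/c_i\mid c_i>0\}$, so the two suprema agree.

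The second step is to feed in Theorem \ref{toric multiplier ideal}: Howald's formula says $1=x^{(0,\dots,0)}\in\mathcal J(X,\mathfrak a^t)$ if and only if $(1,\dots,1)$ lies in the interior of $t\mathrm{Newt}(\mathfrak a)$, whence
$$\mathrm{lct}(X,\mathfrak a)=\sup\{\,t\in\mathbb R_{>0}\mid (1,\dots,1)\in\ \text{interior of}\ t\mathrm{Newt}(\mathfrak a)\,\}.$$

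The last step — the only place needing any care, and where I would be most attentive — is to replace ``interior of $t\mathrm{Newt}(\mathfrak a)$'' by ``$t\mathrm{Newt}(\mathfrak a)$'' without changing the supremum. Here I would use that $P:=\mathrm{Newt}(\mathfrak a)$ is closed, convex, and satisfies $P+\mathbb R^n_{\ge0}=P$ (because $\mathfrak a$ is an ideal of $\mathbb C[x_1,\dots,x_n]$). If $(1,\dots,1)\in tP$ and $0<t'<t$, then $\tfrac{t'}{t}(1,\dots,1)\in t'P$, hence $(1,\dots,1)\in t'P+\mathbb R^n_{>0}$, and this set is contained in the interior of $t'P$ by the translation invariance $t'P+\mathbb R^n_{\ge0}=t'P$. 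Thus every $t'$ strictly below $\sup\{t>0\mid(1,\dots,1)\in tP\}$ already satisfies $(1,\dots,1)\in\text{int}(t'P)$, so the two sets have the same supremum and the corollary follows. There is no serious obstacle here: the statement is genuinely a corollary, and the one subtlety is keeping track of open versus closed in this final bookkeeping.
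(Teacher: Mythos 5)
Your proof is correct and follows exactly the route the paper intends: the paper states this corollary without proof, as an immediate consequence of the definition of the log canonical threshold and Theorem \ref{toric multiplier ideal}, and your argument simply supplies the omitted details. The one point genuinely requiring care --- passing from the interior of $t\mathrm{Newt}(\mathfrak a)$ to $t\mathrm{Newt}(\mathfrak a)$ itself via the invariance $\mathrm{Newt}(\mathfrak a)+\mathbb R^n_{\ge 0}=\mathrm{Newt}(\mathfrak a)$ --- is handled correctly.
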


\begin{lem}\label{log canonical threshold lemma}
Let $\mathbb D=(D,w)$ be an $n$-dimensional  special datum.
Let $\mathfrak a_\mathbb D\subset \mathbb C[x_1,\dots,x_n]$ be the ideal generated by $x_J^{w(J)}$ for $J\in D$.
Then $$\mathrm{lct}(\mathfrak m_{{\mathbb D}})=\mathrm{lct}(\mathfrak a_{{\mathbb D}})=\mathrm{sup}\{t\in\mathbb R_{>0}|\ (1,\dots,1)\in t\mathrm{Newt}(\mathfrak a_{\mathbb D})\}.$$
\end{lem}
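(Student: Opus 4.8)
The plan is to prove the first equality $\mathrm{lct}(\mathfrak m_{\mathbb D})=\mathrm{lct}(\mathfrak a_{\mathbb D})$; the second equality then follows at once, since $\mathfrak a_{\mathbb D}\subset\mathbb C[x_1,\dots,x_n]$ is a monomial ideal and Corollary \ref{toric multiplier ideal log canonical threshold} applies to it verbatim. By Proposition \ref{Watanabe invariant ring} we have $R_{\mathbb D}=\mathbb C[x_1,\dots,x_n]^{G_{\mathbb D}}$, so the inclusion $R_{\mathbb D}\hookrightarrow\mathbb C[x_1,\dots,x_n]$ corresponds to a finite surjective quotient morphism $\pi\colon\mathbb A^n\to X$, where $\mathbb A^n=\mathrm{Spec}\,\mathbb C[x_1,\dots,x_n]$ and $X=\mathrm{Spec}\,R_{\mathbb D}$. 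Since $G_{\mathbb D}\subset\mathrm{SL}(n,\mathbb C)$, the form $dx_1\wedge\cdots\wedge dx_n$ is $G_{\mathbb D}$-invariant and hence descends to a nowhere vanishing section of $\omega_X$ over the smooth locus $X_{\mathrm{sm}}$; therefore $K_X$ is Cartier, the variety $X$ is canonical (by the Remark above), so $\mathrm{lct}(\mathfrak m_{\mathbb D})$ is defined, and $\pi^*K_X=K_{\mathbb A^n}$, i.e.\ $\pi$ is crepant. Finally, by the very definitions, $\mathfrak a_{\mathbb D}$ is precisely the extended ideal $\mathfrak m_{\mathbb D}\cdot\mathbb C[x_1,\dots,x_n]$.

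To prove the first equality I would use the valuative description $\mathrm{lct}(Z,\mathfrak b)=\inf_E\frac{a(E;Z,\mathcal O_Z)+1}{\mathrm{ord}_E(\mathfrak b)}$, the infimum ranging over all prime divisors $E$ over $Z$; this is equivalent to the definition given above because $(Z,\mathfrak b^t)$ is log canonical exactly when $a(E;Z,\mathcal O_Z)-t\,\mathrm{ord}_E(\mathfrak b)\ge-1$ for every such $E$. Let $E'$ be a prime divisor over $\mathbb A^n$. Restricting $\mathrm{ord}_{E'}$ to the subfield $k(X)\subset k(\mathbb A^n)$ yields $r\cdot\mathrm{ord}_E$ for a uniquely determined prime divisor $E$ over $X$ and a positive integer $r$ (a ramification index); conversely, given a prime divisor $E$ over $X$, realizing it on a birational model $Z\to X$, normalizing $Z\times_X\mathbb A^n$, and taking a component of the preimage of $E$ produces an $E'$ over $\mathbb A^n$ restricting to $E$, so the two families of divisors correspond. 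Since $\mathfrak a_{\mathbb D}$ is generated by the images of the generators $x_J^{w(J)}$ of $\mathfrak m_{\mathbb D}$, we get $\mathrm{ord}_{E'}(\mathfrak a_{\mathbb D})=r\,\mathrm{ord}_E(\mathfrak m_{\mathbb D})$, and since $\pi$ is crepant the comparison of log discrepancies under a finite morphism (Proposition 5.20 in \cite{KM}) gives $a(E';\mathbb A^n,\mathcal O_{\mathbb A^n})+1=r\bigl(a(E;X,\mathcal O_X)+1\bigr)$. Therefore
$$\frac{a(E';\mathbb A^n,\mathcal O_{\mathbb A^n})+1}{\mathrm{ord}_{E'}(\mathfrak a_{\mathbb D})}=\frac{a(E;X,\mathcal O_X)+1}{\mathrm{ord}_E(\mathfrak m_{\mathbb D})},$$
and taking infima over the two corresponding families yields $\mathrm{lct}(\mathfrak a_{\mathbb D})=\mathrm{lct}(\mathfrak m_{\mathbb D})$. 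By Corollary \ref{toric multiplier ideal log canonical threshold} the left-hand side equals $\mathrm{sup}\{t\in\mathbb R_{>0}|\ (1,\dots,1)\in t\,\mathrm{Newt}(\mathfrak a_{\mathbb D})\}$, which finishes the proof.

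The part carrying the real content is the invariance of the log canonical threshold under the quasi-étale cover $\pi$, and there are two things to be careful about: that the restriction of a divisorial valuation on $\mathbb A^n$ to $k(X)$ is again a positive-integer multiple of a divisorial valuation and that every divisor over $X$ is obtained this way (standard valuation theory), and that one and the same ramification index $r$ governs both $\mathrm{ord}_{E'}(\mathfrak a_{\mathbb D})$ and the shift of log discrepancy — it is precisely the crepancy $\pi^*K_X=K_{\mathbb A^n}$, guaranteed by $G_{\mathbb D}\subset\mathrm{SL}(n,\mathbb C)$, that prevents an additional ramification term from entering the latter. Alternatively, the first equality may simply be deduced from the well-known fact that a finite morphism which is étale in codimension one preserves log canonical thresholds of extended ideals, the only extra input being $G_{\mathbb D}\subset\mathrm{SL}(n,\mathbb C)$.
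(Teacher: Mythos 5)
Your proposal is correct and follows essentially the same route as the paper: the paper's proof simply invokes Proposition 5.20 in \cite{KM} to say that $(\mathrm{Spec}\,R_{\mathbb D},\mathfrak m_{\mathbb D}^t)$ is log canonical if and only if $(\mathrm{Spec}\,\mathbb C[x_1,\dots,x_n],\mathfrak a_{\mathbb D}^t)$ is, and then applies Corollary \ref{toric multiplier ideal log canonical threshold}. Your detailed valuative comparison (ramification indices, crepancy from $G_{\mathbb D}\subset\mathrm{SL}(n,\mathbb C)$) is just an unpacking of that same citation, and your closing ``alternatively'' remark is exactly the paper's argument.
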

\begin{proof}
It follows from Proposition 5.20 in \cite{KM} that for $t\in \mathbb R_{>0}$, the pair  $(\mathrm{Spec}R_{\mathbb D}, \mathfrak m_{\mathbb D}^t)$  is log canonical
if and only if the pair $(\mathrm{Spec} \mathbb C[x_1,\dots,x_n],\mathfrak a_{\mathbb D}^t)$ is log canonical.
Therefore $\mathrm{lct}(\mathfrak m_{{\mathbb D}})=\mathrm{lct}(\mathfrak a_{{\mathbb D}})$.
By Corollary \ref{toric multiplier ideal log canonical threshold}, we have $$\mathrm{lct}(\mathfrak m_{{\mathbb D}})=\mathrm{lct}(\mathfrak a_{{\mathbb D}})=\mathrm{sup}\{t\in\mathbb R_{>0}|\ (1,\dots,1)\in t\mathrm{Newt}(\mathfrak a_{\mathbb D})\}.$$
\end{proof}

\begin{prop}\label{log canonical threshold}
Let $\mathbb D=(D,w)$ be an $n$-dimensional  special datum and $J_1,\dots,J_m$ be the maximal elements of $D$.
Then 
$$\mathrm{lct}(\mathfrak m_{{\mathbb D}})=\mathrm{lct}(\mathfrak m_{\mathbb D_{J_1}})+\cdots+\mathrm{lct}(\mathfrak m_{\mathbb D_{J_m}}).$$

\end{prop}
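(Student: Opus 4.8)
The plan is to deduce everything from Howald's combinatorial description of the log canonical threshold recorded in Lemma \ref{log canonical threshold lemma}, exploiting that the maximal elements of $D$ partition $\{1,\dots,n\}$.

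First I would set up the combinatorics. Every $J\in D$ is contained in a maximal element of $D$, and in a \emph{unique} one: if $J\subseteq J_k$ and $J\subseteq J_l$ with $J_k,J_l$ maximal, then $J_k\cap J_l\supseteq J\neq\emptyset$, so $J_k\subseteq J_l$ or $J_l\subseteq J_k$ by condition (2) of the definition of a special datum, whence $J_k=J_l$. Thus $J_1,\dots,J_m$ are pairwise disjoint and $\{1,\dots,n\}=J_1\sqcup\cdots\sqcup J_m$, and I identify $\mathbb R^n=\prod_{k=1}^m\mathbb R^{J_k}$. Since each $J_k$ is maximal, condition (3) gives $w(J_k)=1$, hence $w_{J_k}=w|_{D_{J_k}}$; so $\mathfrak a_{\mathbb D_{J_k}}$ is precisely the monomial ideal of $\mathbb C[x_j:j\in J_k]$ generated by the $x_J^{w(J)}$ with $J\subseteq J_k$, and $\mathfrak a_{\mathbb D}=\sum_{k=1}^m\mathfrak a_{\mathbb D_{J_k}}$, a sum of monomial ideals in pairwise disjoint sets of variables.

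Next I would analyse the Newton polyhedron. Writing $\mathrm{Newt}(\mathfrak a_{\mathbb D})=\mathrm{conv}\{w(J)\chi_J:J\in D\}+\mathbb R^n_{\ge0}$ (with $\chi_J$ the indicator vector of $J$), which is closed and upward closed, one sorts a convex combination $\sum_J\lambda_J w(J)\chi_J$ according to which $J_k$ the element $J$ lies in and lets $\mu_k$ be the total weight put on $\{J\in D:J\subseteq J_k\}$; this shows that a point $\mathbf p=(\mathbf p_1,\dots,\mathbf p_m)$ lies in $\mathrm{Newt}(\mathfrak a_{\mathbb D})$ if and only if there are $\mu_1,\dots,\mu_m\ge0$ with $\sum_k\mu_k=1$ and $\mathbf p_k\ge\mu_k q_k$ for some $q_k\in\mathrm{Newt}(\mathfrak a_{\mathbb D_{J_k}})$, for every $k$. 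Applying this with $\mathbf p=t^{-1}(1,\dots,1)$, whose coordinates are all positive (so the constraint coming from a block with $\mu_k=0$ is vacuous), and using that $\mathbf 1_{J_k}\in s\,\mathrm{Newt}(\mathfrak a_{\mathbb D_{J_k}})$ holds exactly for $0<s\le\mathrm{lct}(\mathfrak a_{\mathbb D_{J_k}})$ — because $\mathrm{Newt}$ is closed and upward closed, together with Lemma \ref{log canonical threshold lemma} applied to $\mathbb D_{J_k}$ — the membership turns into $t\mu_k\le\mathrm{lct}(\mathfrak a_{\mathbb D_{J_k}})$ for all $k$. Hence $\mathrm{lct}(\mathfrak a_{\mathbb D})$ is the supremum of those $t$ for which some probability vector $(\mu_1,\dots,\mu_m)$ satisfies $t\mu_k\le\mathrm{lct}(\mathfrak a_{\mathbb D_{J_k}})$ for every $k$.

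Finally I would carry out the elementary optimization. For a probability vector $\mu$ with support $S$ the largest admissible $t$ is $\min_{k\in S}\mathrm{lct}(\mathfrak a_{\mathbb D_{J_k}})/\mu_k$, and since $\min_{k\in S}\mathrm{lct}(\mathfrak a_{\mathbb D_{J_k}})/\mu_k>\sum_{k\in S}\mathrm{lct}(\mathfrak a_{\mathbb D_{J_k}})$ would force $\mu_k<\mathrm{lct}(\mathfrak a_{\mathbb D_{J_k}})/\sum_{j\in S}\mathrm{lct}(\mathfrak a_{\mathbb D_{J_j}})$ for all $k$ and hence $\sum_{k\in S}\mu_k<1$, this quantity is $\le\sum_{k\in S}\mathrm{lct}(\mathfrak a_{\mathbb D_{J_k}})\le\sum_{k=1}^m\mathrm{lct}(\mathfrak a_{\mathbb D_{J_k}})$, with equality attained by the full-support choice $\mu_k=\mathrm{lct}(\mathfrak a_{\mathbb D_{J_k}})/\sum_j\mathrm{lct}(\mathfrak a_{\mathbb D_{J_j}})$ (legitimate since every $\mathrm{lct}(\mathfrak a_{\mathbb D_{J_k}})$ is positive). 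Therefore $\mathrm{lct}(\mathfrak a_{\mathbb D})=\sum_{k=1}^m\mathrm{lct}(\mathfrak a_{\mathbb D_{J_k}})$, and Lemma \ref{log canonical threshold lemma}, applied to $\mathbb D$ and to each $\mathbb D_{J_k}$, converts this into the claimed identity $\mathrm{lct}(\mathfrak m_{\mathbb D})=\sum_{k=1}^m\mathrm{lct}(\mathfrak m_{\mathbb D_{J_k}})$. One could equally reduce to the two-block statement $\mathrm{lct}(\mathfrak a+\mathfrak b)=\mathrm{lct}(\mathfrak a)+\mathrm{lct}(\mathfrak b)$ for monomial ideals in disjoint variables and induct on $m$. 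The only genuinely delicate point in all of this is the block decomposition of $\mathrm{Newt}(\mathfrak a_{\mathbb D})$ together with the interval behaviour of $\{t:(1,\dots,1)\in t\,\mathrm{Newt}(\mathfrak a_{\mathbb D})\}$; the rest is bookkeeping with the partition and a one-line convexity estimate.
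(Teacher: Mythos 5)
Your argument is correct and follows essentially the same route as the paper: both reduce via Lemma \ref{log canonical threshold lemma} to the identity $\mathrm{lct}(\mathfrak a_{\mathbb D})=\sum_k\mathrm{lct}(\mathfrak a_{\mathbb D_{J_k}})$ for the sum of monomial ideals in the pairwise disjoint variable blocks $J_1,\dots,J_m$. The only difference is that the paper asserts this additivity in one line as a known consequence of $\mathfrak a_{\mathbb D}=\sum_i\mathfrak a_{\mathbb D_{J_i}}\mathbb C[x_1,\dots,x_n]$, whereas you supply the Newton-polyhedron block decomposition and the convexity optimization that actually prove it.
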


\begin{proof}
Let $n_0=0$,  $n_i$ be the dimension of $R_{\mathbb D_{J_i}}$ for $i=1,\dots,m$ and $N_{i}=n_0+\cdots+n_{i}$ for $i=0,\dots,m-1$. 
Then we may assume that $J_i=\{N_{i-1}+1,\dots,N_{i-1}+n_i\}$ for $i=1,\dots,m$.
Let $\mathfrak a_{\mathbb D_{J_i}}\subset \mathbb C[x_{N_{i-1}+1},\dots,x_{N_{i-1}+n_i}]$ be the ideal generated by $x_J^{w(J)}$ for $J\in D_{J_i}$.
By Lemma \ref{log canonical threshold lemma},
$$\mathrm{lct}(\mathfrak m_{\mathbb D})=\mathrm{lct}(\mathfrak a_{\mathbb D})=\mathrm{sup}\{t\in\mathbb R_{>0}|\ (1,\dots,1)\in t\mathrm{Newt}(\mathfrak a_{\mathbb D})\subset\mathbb R^{n}\}\ \ \mbox{and}$$
$$\mathrm{lct}(\mathfrak m_{{\mathbb D_{J_i}}})=\mathrm{lct}(\mathfrak a_{{\mathbb D_{J_i}}})=\mathrm{sup}\{t\in\mathbb R_{>0}|\ (1,\dots,1)\in t\mathrm{Newt}(\mathfrak a_{\mathbb D_{J_i}})\subset\mathbb R^{n_i}\}.$$
Since $\mathfrak a_\mathbb D=\sum_i\mathfrak a_{\mathbb D_{J_i}}\mathbb C[x_1,\dots,x_{n}]$,
we have $\mathrm{lct}(\mathfrak a_{{\mathbb D}})=\mathrm{lct}(\mathfrak a_{\mathbb D_{J_1}})+\cdots+\mathrm{lct}(\mathfrak a_{\mathbb D_{J_m}}).$
Therefore we have $\mathrm{lct}(\mathfrak m_{{\mathbb D}})=\mathrm{lct}(\mathfrak m_{\mathbb D_{J_1}})+\cdots+\mathrm{lct}(\mathfrak m_{\mathbb D_{J_m}}).$

\end{proof}

\begin{prop}\label{log canonical threshold max 1 lct}
Let $\mathbb D=(D,w)$ be an $n$-dimensional connected special datum, $J\in D$ be the maximal element of $D$ with $|J|\ge 2$ and $J'$ be a maximal element of $D\setminus J$.
Then $$\mathrm{lct}(\mathfrak m_{{\mathbb D}})=\mathrm{max}\Bigl\{1, \frac{\mathrm{lct}(\mathfrak m_{\mathbb D\setminus J})}{w(J')}\Bigl\}.$$
\end{prop}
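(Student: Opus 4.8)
The plan is to push everything into the combinatorics of Newton polygons via Lemma~\ref{log canonical threshold lemma} and Corollary~\ref{toric multiplier ideal log canonical threshold}. Since $\mathbb D$ is connected with maximal element $J$ (and $|J|\ge 2$, so $J=\{1,\dots,n\}$ and $w(J)=1$ by property~(3)), every element of $D\setminus J$ lies under one of the children $J_1,\dots,J_m$ of $J$ (those with $J_i\sqsubset J$), and property~(5) forces $w(J_1)=\dots=w(J_m)=w(J')$; put $c:=w(J')$. Then (Lemma~\ref{reduce lemma} and the definition of $\mathbb D\setminus J$) the reduced datum $\mathbb D\setminus J=(D\setminus J,v)$ has $v(J'')=w(J'')/c$ for every $J''\in D\setminus J$. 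Writing $\mathbf 1=(1,\dots,1)$ and, for $J''\subseteq\{1,\dots,n\}$, letting $\chi_{J''}$ be its characteristic vector, the generating exponents of $\mathfrak a_{\mathbb D}$ are $\mathbf 1$ (from $J$) together with $w(J'')\chi_{J''}$ for $J''\in D\setminus J$, while those of $\mathfrak a_{\mathbb D\setminus J}$ are $(w(J'')/c)\chi_{J''}$. Using $\mathrm{Newt}(\mathfrak a+\mathfrak b)=\mathrm{conv}(\mathrm{Newt}(\mathfrak a)\cup\mathrm{Newt}(\mathfrak b))$ and the fact that $\mathrm{Newt}$ scales linearly when all generating exponents are scaled, I would record
\[
\mathrm{Newt}(\mathfrak a_{\mathbb D})=\mathrm{conv}\bigl((\mathbf 1+\mathbb R^n_{\geq 0})\cup P\bigr),\qquad P:=c\cdot\mathrm{Newt}(\mathfrak a_{\mathbb D\setminus J}).
\]

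Next I would reinterpret the log canonical threshold as the first time the diagonal ray enters the Newton polygon. Since a Newton polygon is closed and stable under adding $\mathbb R^n_{\geq 0}$, the set $\{s>0:s\mathbf 1\in\mathrm{Newt}(\mathfrak a_{\mathbb D})\}$ is a closed half-line $[s_0,\infty)$, and Lemma~\ref{log canonical threshold lemma} with Corollary~\ref{toric multiplier ideal log canonical threshold} gives $\mathrm{lct}(\mathfrak m_{\mathbb D})=1/s_0$; likewise $\mathrm{lct}(\mathfrak m_{\mathbb D\setminus J})=1/s_1$ where $[s_1,\infty)=\{s:s\mathbf 1\in\mathrm{Newt}(\mathfrak a_{\mathbb D\setminus J})\}$, and the scaling $P=c\cdot\mathrm{Newt}(\mathfrak a_{\mathbb D\setminus J})$ gives $\{s:s\mathbf 1\in P\}=[cs_1,\infty)$. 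So the proposition is equivalent to the convex-geometric identity $s_0=\min\{1,cs_1\}$. The inequality $s_0\le\min\{1,cs_1\}$ is immediate, because $\mathrm{Newt}(\mathfrak a_{\mathbb D})$ contains both $\mathbf 1+\mathbb R^n_{\geq 0}$ (giving $s_0\le 1$) and $P$ (giving $s_0\le cs_1$).

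For the reverse inequality I would split into cases according to whether $\mathbf 1\in P$. If $\mathbf 1\in P$, then upward-closedness gives $\mathbf 1+\mathbb R^n_{\geq 0}\subseteq P$, hence $\mathrm{Newt}(\mathfrak a_{\mathbb D})=P$ and $s_0=cs_1$; also $cs_1\le 1$ since $\mathbf 1\in P$, so $s_0=\min\{1,cs_1\}$. If $\mathbf 1\notin P$, then first $cs_1>1$ (otherwise $cs_1\mathbf 1\in P$ would force $\mathbf 1\in cs_1\mathbf 1+\mathbb R^n_{\geq 0}\subseteq P$), so $\min\{1,cs_1\}=1$ and it remains to rule out $s\mathbf 1\in\mathrm{Newt}(\mathfrak a_{\mathbb D})$ for $s<1$. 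Writing such a point as $\lambda q_1+(1-\lambda)q_2$ with $q_1\in\mathbf 1+\mathbb R^n_{\geq 0}$, $q_2\in P$, $\lambda\in[0,1]$: the case $\lambda=1$ is impossible since then $s\mathbf 1=q_1\ge\mathbf 1$; for $\lambda<1$, from $q_1\ge\mathbf 1$ coordinatewise one gets $q_2\le\frac{s-\lambda}{1-\lambda}\mathbf 1$, so upward-closedness of $P$ places $\frac{s-\lambda}{1-\lambda}\mathbf 1$ in $P$, whence $cs_1\le\frac{s-\lambda}{1-\lambda}$; combined with $cs_1>1$ this gives $s>1$, a contradiction. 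Hence $s_0\ge 1$, so $s_0=1=\min\{1,cs_1\}$. Taking reciprocals and recalling $c=w(J')$ yields $\mathrm{lct}(\mathfrak m_{\mathbb D})=\max\{1,\mathrm{lct}(\mathfrak m_{\mathbb D\setminus J})/w(J')\}$.

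The one genuinely non-formal point is the reverse inequality when $\mathbf 1\notin P$: in general the convex hull of two upward-closed convex bodies can meet the diagonal strictly before either body does, so the argument must use the special shape, namely that one of the two bodies is the translated orthant $\mathbf 1+\mathbb R^n_{\geq 0}$. This is exactly what the coordinatewise domination $q_2\le\frac{s-\lambda}{1-\lambda}\mathbf 1$ exploits. Everything else is the bookkeeping needed to identify $\mathrm{Newt}(\mathfrak a_{\mathbb D})$ with $\mathrm{conv}((\mathbf 1+\mathbb R^n_{\geq 0})\cup c\cdot\mathrm{Newt}(\mathfrak a_{\mathbb D\setminus J}))$ and to pass between log canonical thresholds and diagonal entry times.
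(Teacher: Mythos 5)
Your proof is correct and follows essentially the same route as the paper: both reduce to Newton polygons via Lemma~\ref{log canonical threshold lemma} and Corollary~\ref{toric multiplier ideal log canonical threshold}, use the decomposition $\mathfrak a_{\mathbb D}=(x_1\cdots x_n)+\mathfrak a_{\mathbb D\setminus J}$ together with the scaling $\mathrm{Newt}(\mathfrak a_{\mathbb D\setminus J})=w(J')\,\mathrm{Newt}(\mathfrak b_{\mathbb D\setminus J})$, and conclude that the diagonal enters the hull at $\min\{1,w(J')s_1\}$. The only difference is that you spell out the convexity argument (the coordinatewise domination step) behind the paper's unproved assertion that $(1,\dots,1)\in t\mathrm{Newt}(\mathfrak a_{\mathbb D})$ with $t>1$ forces $(1,\dots,1)\in t\mathrm{Newt}(\mathfrak a_{\mathbb D\setminus J})$, which is a welcome addition rather than a deviation.
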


\begin{proof}
Let $\mathfrak a_\mathbb D\subset \mathbb C[x_1,\dots,x_n]$ be the ideal generated by $x_J^{w(J)}$ for $J\in D$.
By Lemma \ref{log canonical threshold lemma}, we have $$\mathrm{lct}(\mathfrak m_{{\mathbb D}})=\mathrm{lct}(\mathfrak a_{{\mathbb D}})=\mathrm{sup}\{t\in\mathbb R_{>0}|\ (1,\dots,1)\in t\mathrm{Newt}(\mathfrak a_{\mathbb D})\}.$$
Let $\mathbb D\setminus J=(D\setminus J,v)$ and  $a=w(J')$.
Let $\mathfrak a_{\mathbb D\setminus J}\subset \mathbb C[x_1,\dots,x_n]$ be the ideal generated by $x_J^{w(J)}$ for $J\in D\setminus J$ and $\mathfrak b_{\mathbb D\setminus J}\subset \mathbb C[x_1,\dots,x_n]$ be the ideal generated by $x_J^{v(J)}$ for $J\in D\setminus J$.
Then by  Corollary \ref{toric multiplier ideal log canonical threshold} and Lemma \ref{log canonical threshold lemma}, 
$$\mathrm{lct}(\mathfrak a_{{\mathbb D\setminus J}})=\mathrm{sup}\{t\in\mathbb R_{>0}|\ (1,\dots,1)\in t\mathrm{Newt}(\mathfrak a_{\mathbb D\setminus J})\}\ \ \mathrm{and}$$
$$\mathrm{lct}(\mathfrak m_{{\mathbb D\setminus J}})=\mathrm{lct}(\mathfrak b_{{\mathbb D\setminus J}})=\mathrm{sup}\{t\in\mathbb R_{>0}|\ (1,\dots,1)\in t\mathrm{Newt}(\mathfrak b_{\mathbb D\setminus J})\}.$$
Since $\mathrm{Newt}(\mathfrak a_{\mathbb D\setminus J})=a\mathrm{Newt}(\mathfrak b_{\mathbb D\setminus J})$, we have  $$\mathrm{lct}(\mathfrak m_{{\mathbb D\setminus J}})=\mathrm{lct}(\mathfrak b_{{\mathbb D\setminus J}})=a{\mathrm{lct}(\mathfrak a_{{\mathbb D\setminus J}})}.$$
Since $\mathfrak a_{\mathbb D}=(x_1\cdots x_n)+\mathfrak a_{\mathbb D\setminus J}$, we have 
$$(1,\dots,1)\in\mathrm{Newt}(\mathfrak a_{\mathbb D})=\mathrm{Newt}((x_1\cdots x_n)+\mathfrak a_{\mathbb D\setminus J}).$$
Therefore we have $\mathrm{lct}(\mathfrak a_{{\mathbb D}})\ge 1$.
If $(1,\dots,1)\in t\mathrm{Newt}(\mathfrak a_{\mathbb D})$ for $t>1$, then $(1,\dots,1)\in t\mathrm{Newt}(\mathfrak a_{\mathbb D\setminus J})$.
This implies that if $\mathrm{lct}(\mathfrak m_{{\mathbb D}})=\mathrm{lct}(\mathfrak a_{{\mathbb D}})>1$, then 
$$\mathrm{lct}(\mathfrak a_{{\mathbb D}})=\mathrm{lct}(\mathfrak a_{{\mathbb D\setminus J}})=\frac{\mathrm{lct}(\mathfrak b_{{\mathbb D\setminus J}})}{a}=\frac{\mathrm{lct}(\mathfrak m_{{\mathbb D\setminus J}})}{a}.$$
Therefore  $$\mathrm{lct}(\mathfrak m_{{\mathbb D}})=\mathrm{max}\Bigl\{1, \frac{\mathrm{lct}(\mathfrak m_{\mathbb D\setminus J})}{w(J')}\Bigl\}.$$
\end{proof}

\begin{cor}
Let $\mathbb D=(D,w)$ be a  special datum.
Then $$\mathrm{lct}(\mathfrak m_{{\mathbb D}})\ge |\{J\in D|\ J {\rm \ is\ a\ maximal\ element\ of\ }D\}|.$$
\end{cor}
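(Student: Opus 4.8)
The plan is to reduce to the connected case and then observe that every connected component of $\mathbb D$ contributes at least $1$ to the log canonical threshold. Write $J_1,\dots,J_m$ for the maximal elements of $D$, so that $m=|\{J\in D\mid J\text{ is a maximal element of }D\}|$. By Proposition \ref{log canonical threshold},
$$\mathrm{lct}(\mathfrak m_{\mathbb D})=\mathrm{lct}(\mathfrak m_{\mathbb D_{J_1}})+\cdots+\mathrm{lct}(\mathfrak m_{\mathbb D_{J_m}}),$$
and each $\mathbb D_{J_i}$ is a connected special datum, having the unique maximal element $J_i$. Hence it is enough to prove the inequality $\mathrm{lct}(\mathfrak m_{\mathbb D'})\ge 1$ for an arbitrary connected special datum $\mathbb D'=(D',w')$ with maximal element $J$.

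For this I would split into two cases according to the size of $J$. If $|J|=1$, then $D'=\{J\}$, so $R_{\mathbb D'}=\mathbb C[x]$ is regular with $\mathfrak m_{\mathbb D'}=(x)$; by Lemma \ref{log canonical threshold lemma} and Corollary \ref{toric multiplier ideal log canonical threshold}, $\mathrm{lct}(\mathfrak m_{\mathbb D'})$ equals $\sup\{t>0\mid 1\in t\,\mathrm{Newt}((x))\}=1$. If $|J|\ge 2$, then $D'\setminus J$ is non-empty (it contains all the singletons $\{i\}$), so it has a maximal element $J'$, and Proposition \ref{log canonical threshold max 1 lct} yields
$$\mathrm{lct}(\mathfrak m_{\mathbb D'})=\mathrm{max}\Bigl\{1,\frac{\mathrm{lct}(\mathfrak m_{\mathbb D'\setminus J})}{w'(J')}\Bigr\}\ge 1.$$
In either case $\mathrm{lct}(\mathfrak m_{\mathbb D'})\ge 1$, and summing these bounds over $i=1,\dots,m$ gives $\mathrm{lct}(\mathfrak m_{\mathbb D})\ge m$, as desired.

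I do not expect any real obstacle: the statement follows almost formally from the two preceding propositions, and the only point requiring attention is the base case of a (necessarily connected) $1$-dimensional datum, where the relevant log canonical threshold is that of the maximal ideal of a smooth curve and equals $1$. A minor check is that the hypotheses of Proposition \ref{log canonical threshold max 1 lct} are met, namely that $D'\setminus J$ possesses a maximal element whenever $|J|\ge 2$; this is clear since the singletons always lie in $D'\setminus J$.
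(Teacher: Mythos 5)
Your argument is correct and is exactly the paper's approach: the paper's proof simply cites Proposition \ref{log canonical threshold} (to split over connected components) and Proposition \ref{log canonical threshold max 1 lct} (to get the bound $\ge 1$ on each component), and you have filled in the same two steps, including the harmless base case $|J|=1$.
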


\begin{proof}
This follows immediately from Proposition \ref{log canonical threshold} and Proposition \ref{log canonical threshold max 1 lct}.
\end{proof}

\begin{prop}
\label{embedding dimension log canonical threshold1}
Let $\mathbb D=(D,w)$ be an $n$-dimensional  connected special datum and
 $J\in D$ be the maximal element of $D$ with $|J|\ge 2$.
Let $J_1,\dots,J_m$ be the elements of $D$ such that $J=J_1\cup\dots\cup J_m$ and
$J_i\sqsubset  J$ for $i=1,\dots,m$.
Then $$\mathrm{emb}(R_{\mathbb D})\le 2n-\lceil\mathrm{lct}(\mathfrak m_{{\mathbb D}_{J_1}})\rceil-\cdots-\lceil\mathrm{lct}(\mathfrak m_{{\mathbb D}_{J_m}})\rceil+1\le 2n-\lceil\mathrm{lct}(\mathfrak m_{\mathbb D})\rceil.$$
Moreover, the following are equvalent:
\begin{enumerate}
\item[\rm{(1)}]
 $\mathrm{emb}(R_{\mathbb D})=2n-\lceil\mathrm{lct}(\mathfrak m_{\mathbb D})\rceil$.

\item[\rm{(2)}]
$\mathrm{emb}(R_{{\mathbb D_{J_i}}})=2\mathrm{dim}R_{{\mathbb D_{J_i}}}-\lceil\mathrm{lct}(\mathfrak m_{{\mathbb D_{J_i}}})\rceil$ for $i=1,\dots ,m$ and\\
$\lceil\mathrm{lct}(\mathfrak m_{{\mathbb D_{J_1}}})\rceil+\dots+\lceil\mathrm{lct}(\mathfrak m_{{\mathbb D_{J_m}}})\rceil-1=\lceil\mathrm{lct}(\mathfrak m_{{\mathbb D}})\rceil$.
\end{enumerate}
\end{prop}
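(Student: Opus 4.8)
The plan is to reduce everything to the connected pieces $\mathbb D_{J_1},\dots,\mathbb D_{J_m}$ of $\mathbb D\setminus J$ and then combine them with the single generator $x_J$ coming from the maximal element $J$. First I would record the numerical ingredients. By Proposition~\ref{embedding dimension} and Proposition~\ref{non-connected}(4) applied to the non-connected special datum $\mathbb D\setminus J$, together with the fact that $D=(D\setminus J)\cup\{J\}$, we get
$$\mathrm{emb}(R_{\mathbb D})=\mathrm{emb}(R_{\mathbb D\setminus J})+1=\sum_{i=1}^m\mathrm{emb}(R_{\mathbb D_{J_i}})+1.$$
On the other hand, since each $\mathbb D_{J_i}$ is a connected special datum with $\dim R_{\mathbb D_{J_i}}=|J_i|=:n_i$ and $n=\sum_i n_i$, I would invoke the bound $\mathrm{emb}(R_{\mathbb D_{J_i}})\le 2n_i-\lceil\mathrm{lct}(\mathfrak m_{\mathbb D_{J_i}})\rceil$. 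This bound itself needs justification at the base of an induction: for a connected datum either $|J|\ge 2$ (handled by this very proposition, lower-dimensional, so available by induction on $n$) or $D=\{\{1\},\dots,\{n\}\}$ is a union of singletons in dimension... actually for a \emph{connected} datum with $|J|\ge2$ this is the inductive step, and the truly minimal connected case is $n=1$, where $\mathrm{emb}=1=2\cdot1-1=2\cdot1-\lceil\mathrm{lct}(\mathfrak m)\rceil$. So the first main step is to set up the induction on $n$ and cite $\mathbb D_{J_i}$ at strictly smaller dimension (note $n_i<n$ because $m\ge 2$, as $J=J_1\cup\dots\cup J_m$ with each $J_i\subsetneq J$).

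Summing the per-component bounds gives $\sum_i\mathrm{emb}(R_{\mathbb D_{J_i}})\le 2n-\sum_i\lceil\mathrm{lct}(\mathfrak m_{\mathbb D_{J_i}})\rceil$, hence
$$\mathrm{emb}(R_{\mathbb D})\le 2n-\sum_{i=1}^m\lceil\mathrm{lct}(\mathfrak m_{\mathbb D_{J_i}})\rceil+1,$$
which is the first claimed inequality. For the second inequality, $2n-\sum_i\lceil\mathrm{lct}(\mathfrak m_{\mathbb D_{J_i}})\rceil+1\le 2n-\lceil\mathrm{lct}(\mathfrak m_{\mathbb D})\rceil$, I would argue purely on the log canonical threshold side: by Proposition~\ref{log canonical threshold} applied to $\mathbb D\setminus J$ (whose maximal elements are exactly $J_1,\dots,J_m$), $\mathrm{lct}(\mathfrak m_{\mathbb D\setminus J})=\sum_i\mathrm{lct}(\mathfrak m_{\mathbb D_{J_i}})$, and by Proposition~\ref{log canonical threshold max 1 lct}, $\mathrm{lct}(\mathfrak m_{\mathbb D})=\max\{1,\mathrm{lct}(\mathfrak m_{\mathbb D\setminus J})/w(J')\}$. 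Now $w(J')\ge 2$ by axiom (4) of a special datum (since $J'\subsetneq J$ forces $w(J')>w(J)=1$), so $\mathrm{lct}(\mathfrak m_{\mathbb D})\le\max\{1,\mathrm{lct}(\mathfrak m_{\mathbb D\setminus J})/2\}$. I then need the elementary inequality $\lceil\max\{1,s/2\}\rceil\le\sum_i\lceil s_i\rceil-1$ whenever $s=\sum_i s_i$ with each $s_i\ge 1$ and $m\ge 2$; this follows because $\sum_i\lceil s_i\rceil\ge\sum_i s_i= s\ge s/2$ and $\sum_i\lceil s_i\rceil\ge m\ge 2$, so $\sum_i\lceil s_i\rceil-1\ge\max\{1,\lceil s/2\rceil\}\ge\lceil\max\{1,s/2\}\rceil$ (using $\lceil s/2\rceil\ge s/2$ and integrality). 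I expect this little ceiling estimate to be the fiddliest part and the place most likely to need careful case analysis on whether $\mathrm{lct}(\mathfrak m_{\mathbb D})$ equals $1$ or exceeds $1$.

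For the equivalence of (1) and (2): tracing back, $\mathrm{emb}(R_{\mathbb D})=2n-\lceil\mathrm{lct}(\mathfrak m_{\mathbb D})\rceil$ forces both inequalities above to be equalities. Equality in the first forces $\mathrm{emb}(R_{\mathbb D_{J_i}})=2n_i-\lceil\mathrm{lct}(\mathfrak m_{\mathbb D_{J_i}})\rceil$ for every $i$ (each term is $\le$ its bound and the sum is tight). Equality in the second, $\sum_i\lceil\mathrm{lct}(\mathfrak m_{\mathbb D_{J_i}})\rceil-1=\lceil\mathrm{lct}(\mathfrak m_{\mathbb D})\rceil$, is exactly the second condition in (2). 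Conversely, given (2), plug $\mathrm{emb}(R_{\mathbb D_{J_i}})=2n_i-\lceil\mathrm{lct}(\mathfrak m_{\mathbb D_{J_i}})\rceil$ into $\mathrm{emb}(R_{\mathbb D})=\sum_i\mathrm{emb}(R_{\mathbb D_{J_i}})+1$ and use $\sum_i\lceil\mathrm{lct}(\mathfrak m_{\mathbb D_{J_i}})\rceil-1=\lceil\mathrm{lct}(\mathfrak m_{\mathbb D})\rceil$ to land exactly on $2n-\lceil\mathrm{lct}(\mathfrak m_{\mathbb D})\rceil$. The only subtlety to double-check is that the equality case of the \emph{second} inequality is fully captured by the single condition $\sum_i\lceil\mathrm{lct}(\mathfrak m_{\mathbb D_{J_i}})\rceil-1=\lceil\mathrm{lct}(\mathfrak m_{\mathbb D})\rceil$ and does not secretly impose more — but since the chain from $\mathrm{emb}(R_{\mathbb D})$ down to $2n-\lceil\mathrm{lct}(\mathfrak m_{\mathbb D})\rceil$ passes through precisely the two displayed inequalities, tightness of the composite is equivalent to tightness of each, so this is clean.
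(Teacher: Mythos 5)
Your proposal is correct and follows essentially the same route as the paper's own proof: induction on $n$, the identity $\mathrm{emb}(R_{\mathbb D})=\sum_i\mathrm{emb}(R_{\mathbb D_{J_i}})+1$, Propositions \ref{log canonical threshold} and \ref{log canonical threshold max 1 lct}, and a case split on whether $\mathrm{lct}(\mathfrak m_{\mathbb D})=1$ or $\mathrm{lct}(\mathfrak m_{\mathbb D})=\mathrm{lct}(\mathfrak m_{\mathbb D\setminus J})/w(J_1)$ to get $\lceil\mathrm{lct}(\mathfrak m_{\mathbb D})\rceil\le\sum_i\lceil\mathrm{lct}(\mathfrak m_{\mathbb D_{J_i}})\rceil-1$. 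Your ceiling estimate is the same one the paper uses implicitly (it holds because $s=\sum_i s_i\ge m\ge 2$ gives $\lceil s/2\rceil\le\lceil s\rceil-1\le\sum_i\lceil s_i\rceil-1$), and the equality analysis via tightness of the two displayed inequalities matches the paper.
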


\begin{proof}
We prove this by induction on the dimension of $\mathbb D=(D,w)$.
Note that if the dimension of $R_{\mathbb D}$ is $1$, then $R_{\mathbb D}=\mathbb C[x_1]$, $\mathrm{emb}(R_{\mathbb D})=1$ and $\mathrm{lct}(\mathfrak m_{\mathbb D})=1$
and if the dimension of $R_{\mathbb D}$ is $2$, then $R_{\mathbb D}=\mathbb C[x_1^{w(1)},x_2^{w(2)},x_1x_2]$, $\mathrm{emb}(R_{\mathbb D})=3$ and $\mathrm{lct}(\mathfrak m_{\mathbb D})=1$ by Proposition \ref{log canonical threshold max 1 lct}.
Therefore the proposition holds when $\mathrm{dim}R_{\mathbb D}=2$.
Now suppose that  $n\ge 3$ and this proposition  holds for any special datum  of dimension at most $n-1$.

Let $a=w(J_1)$.
Note that  the dimension of a connected component of $\mathbb D\setminus J$ is less than $n$ and
 $\mathrm{emb}(R_{\mathbb D})=\mathrm{emb}(R_{\mathbb D\setminus J})+1$.
 By the induction hypothesis
\begin{align*}
\mathrm{emb}(R_{\mathbb D})&= \mathrm{emb}(R_{\mathbb D\setminus J})+1\\
&=\mathrm{emb}(R_{{\mathbb D_{J_1}}})+\cdots+\mathrm{emb}(R_{{\mathbb D_{J_m}}})+1\\
&\le 2n-\lceil\mathrm{lct}(\mathfrak m_{{\mathbb D}_{J_1}})\rceil-\cdots-\lceil\mathrm{lct}(\mathfrak m_{{\mathbb D}_{J_m}})\rceil+1.
\end{align*}
 Note that $a\ge 2$, $m\ge 2$, $\mathrm{lct}(\mathfrak m_{{\mathbb D\setminus J}})=\mathrm{lct}(\mathfrak m_{{\mathbb D}_{J_1}})+\cdots+\mathrm{lct}(\mathfrak m_{{\mathbb D}_{J_m}})$ by  Proposition \ref{log canonical threshold} and $\mathrm{lct}(\mathfrak m_{{\mathbb D}})=\mathrm{max}\Bigl\{1, \frac{\mathrm{lct}(\mathfrak m_{\mathbb D\setminus J})}{w(J_1)}\Bigl\}$ by Proposition \ref{log canonical threshold max 1 lct}.

If $\mathrm{lct}(\mathfrak m_{{\mathbb D}})=1$, then 
\begin{align*}
\lceil\mathrm{lct}(\mathfrak m_{{\mathbb D}})\rceil=1\le \lceil\mathrm{lct}(\mathfrak m_{{\mathbb D}_{J_1}})\rceil+\cdots+\lceil\mathrm{lct}(\mathfrak m_{{\mathbb D}_{J_m}})\rceil-1.
\end{align*}

If $\mathrm{lct}(\mathfrak m_{{\mathbb D}})=\frac{\mathrm{lct}(\mathfrak m_{\mathbb D\setminus J})}{a}$, then 
\begin{align*}
\lceil\mathrm{lct}(\mathfrak m_{{\mathbb D}})\rceil&=\Big{\lceil}\frac{\mathrm{lct}(\mathfrak m_{{\mathbb D\setminus J}})}{a}\Big{\rceil}\\
&\le\Big{\lceil}\frac{\mathrm{lct}(\mathfrak m_{{\mathbb D}_{J_1}})+\cdots+\mathrm{lct}(\mathfrak m_{{\mathbb D}_{J_m}})}{2}\Big{\rceil}\\
&\le\lceil\mathrm{lct}(\mathfrak m_{{\mathbb D}_{J_1}})\rceil+\cdots+\lceil\mathrm{lct}(\mathfrak m_{{\mathbb D}_{J_m}})\rceil-1.
\end{align*}

Therefore the proposition holds.
\end{proof}

\begin{rem}
Even if a complete intersection singularity $(R, \mathfrak m)$ is not an invariant ring, the inequality $\mathrm{emb}(R)\le 2n-\lceil\mathrm{lct}(\mathfrak m)\rceil$ holds.
See Corollary 4.5 and Lemma 5.1 in \cite{Shibata}.
\end{rem}

\begin{lem}
\label{embedding dimension log canonical threshold2}
Let $\mathbb D=(D,w)$ be an $n$-dimensional  connected special datum and
 $J\in D$ be the maximal element of $D$ with $|J|\ge 2$.
Let $J_1,\dots,J_m$ be the elements of $D$ such that $J=J_1\cup\dots\cup J_m$ and
$J_i\sqsubset  J$ for $i=1,\dots,m$.
If $$\mathrm{lct}(\mathfrak m_{{\mathbb D}})=\frac{\mathrm{lct}(\mathfrak m_{\mathbb D\setminus J})}{w(J_1)}\ \ \mbox{and}$$ 
$$\lceil\mathrm{lct}(\mathfrak m_{{\mathbb D_{J_1}}})\rceil+\dots+\lceil\mathrm{lct}(\mathfrak m_{{\mathbb D_{J_m}}})\rceil-1=\lceil\mathrm{lct}(\mathfrak m_{{\mathbb D}})\rceil,$$ then $w(J_1)=2$ and $\lceil\mathrm{lct}(\mathfrak m_{{\mathbb D\setminus J}})\rceil-\lceil\mathrm{lct}(\mathfrak m_{\mathbb D})\rceil=1$.
\end{lem}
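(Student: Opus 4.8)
\emph{Plan.} The idea is to re-run the chain of inequalities that appears in the proof of Proposition~\ref{embedding dimension log canonical threshold1}, to observe that the two hypotheses force every inequality in that chain to be an equality, and then to read off the two asserted numerical facts from the resulting rigid situation.

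First I would fix notation. Write $a=w(J_1)$; by axioms (3), (4), (5) of a special datum we have $w(J_1)=\dots=w(J_m)=a$ and $a>w(J)=1$, so $a\ge 2$, while $m\ge 2$ (if $m=1$ then $J=J_1\sqsubsetneq J$, absurd). Put $L_i=\mathrm{lct}(\mathfrak m_{\mathbb D_{J_i}})$ and $S=\mathrm{lct}(\mathfrak m_{\mathbb D\setminus J})$. The Corollary following Proposition~\ref{log canonical threshold max 1 lct} gives $L_i\ge 1$ for every $i$, and, as in the proof of Proposition~\ref{embedding dimension log canonical threshold1}, Proposition~\ref{log canonical threshold} gives $S=L_1+\dots+L_m$. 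The one place the first hypothesis enters is this: since $\mathrm{lct}(\mathfrak m_{\mathbb D})=\max\{1,S/a\}$ by Proposition~\ref{log canonical threshold max 1 lct}, while the first hypothesis says $\mathrm{lct}(\mathfrak m_{\mathbb D})=S/a$, we must have $S/a\ge 1$, i.e.\ $a\le S$.

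Next, setting $N=\lceil L_1\rceil+\dots+\lceil L_m\rceil$ and using $a\ge 2$, $S=L_1+\dots+L_m\le N$, and $N\ge m\ge 2$, one obtains exactly as in Proposition~\ref{embedding dimension log canonical threshold1}
\[
\lceil \mathrm{lct}(\mathfrak m_{\mathbb D})\rceil=\lceil S/a\rceil\le\lceil S/2\rceil\le\lceil N/2\rceil\le N-1 .
\]
The second hypothesis asserts that the two extreme terms are equal, so all three inequalities are equalities; in particular $\lceil N/2\rceil=N-1$, which together with $N\ge 2$ forces $N\in\{2,3\}$ (for $N\ge 4$ one has $\lceil N/2\rceil\le (N+1)/2<N-1$). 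Hence $a\le S\le N\le 3$, so $a\in\{2,3\}$.

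It remains to eliminate $a=3$ and to pin down the ceilings. If $a=3$, then $3=a\le S\le N\le 3$ forces $S=3$ and $N=3$, so $\mathrm{lct}(\mathfrak m_{\mathbb D})=S/a=1$ and the second hypothesis becomes $1=N-1=2$, a contradiction; therefore $w(J_1)=a=2$. With $a=2$ the first hypothesis gives $\mathrm{lct}(\mathfrak m_{\mathbb D})=S/2$, so the second hypothesis reads $\lceil S/2\rceil=N-1$: if $N=2$ this forces $S\le 2$, and $S\ge a=2$ then gives $S=2$; if $N=3$ it forces $S>2$, and $S\le N=3$ then gives $2<S\le 3$. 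In either case $\lceil S\rceil=N$, whence $\lceil \mathrm{lct}(\mathfrak m_{\mathbb D\setminus J})\rceil-\lceil \mathrm{lct}(\mathfrak m_{\mathbb D})\rceil=\lceil S\rceil-(N-1)=1$. The only real content is the collapse of the Proposition~\ref{embedding dimension log canonical threshold1} chain to equalities and the elementary fact that $\lceil N/2\rceil=N-1$ forces $N\in\{2,3\}$; everything else is bookkeeping with ceilings, hinging on the reading of the first hypothesis as $a\le S$.
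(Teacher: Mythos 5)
Your proof is correct and takes essentially the same route as the paper's: both arguments force the ceiling-inequality chain underlying Proposition~\ref{embedding dimension log canonical threshold1} to collapse to equalities and then finish with a small finite case check. You organize the cases by $N=\sum_i\lceil\mathrm{lct}(\mathfrak m_{\mathbb D_{J_i}})\rceil\in\{2,3\}$ and pin down $w(J_1)=2$ via $w(J_1)\le\mathrm{lct}(\mathfrak m_{\mathbb D\setminus J})\le N\le 3$, while the paper cases on $\lceil\mathrm{lct}(\mathfrak m_{\mathbb D})\rceil\in\{1,2\}$ and on $m$, but the substance is the same.
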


\begin{proof}
If $\lceil\mathrm{lct}(\mathfrak m_{{\mathbb D}})\rceil\ge3$, then 
$\lceil\mathrm{lct}(\mathfrak m_{{\mathbb D}})\rceil<\lceil\mathrm{lct}(\mathfrak m_{{\mathbb D}_{J_1}})\rceil+\cdots+\lceil\mathrm{lct}(\mathfrak m_{{\mathbb D}_{J_m}})\rceil-1.$
Indeed, by  Proposition \ref{log canonical threshold}
\begin{align*}
\lceil\mathrm{lct}(\mathfrak m_{{\mathbb D}})\rceil&<\lceil2\mathrm{lct}(\mathfrak m_{{\mathbb D}})\rceil-1\le\lceil\mathrm{lct}(\mathfrak m_{\mathbb D\setminus J})\rceil-1\\
&\le\lceil\mathrm{lct}(\mathfrak m_{{\mathbb D}_{J_1}})\rceil+\cdots+\lceil\mathrm{lct}(\mathfrak m_{{\mathbb D}_{J_m}})\rceil-1.
\end{align*}
Therefore we have that 
$\lceil\mathrm{lct}(\mathfrak m_{{\mathbb D}})\rceil$ is $1$ or $2$.\\
Note that $m\ge 2$ and $\mathrm{lct}(\mathfrak m_{{\mathbb D}})=\mathrm{max}\Bigl\{1, \frac{\mathrm{lct}(\mathfrak m_{\mathbb D\setminus J})}{w(J_1)}\Bigl\}$ by Proposition \ref{log canonical threshold max 1 lct}.
\begin{enumerate}
\item
If $\lceil\mathrm{lct}(\mathfrak m_{{\mathbb D}})\rceil=1$, then $m=2,\lceil\mathrm{lct}(\mathfrak m_{{\mathbb D}_{J_1}})\rceil=\lceil\mathrm{lct}(\mathfrak m_{{\mathbb D}_{J_2}})\rceil=1$.
Therefore $\mathrm{lct}(\mathfrak m_{{\mathbb D}})=\mathrm{lct}(\mathfrak m_{{\mathbb D}_{J_1}})=\mathrm{lct}(\mathfrak m_{{\mathbb D}_{J_2}})=1$.
By Proposition \ref{log canonical threshold}, we have $\mathrm{lct}(\mathfrak m_{{\mathbb D\setminus J}})=2$.
Since $\mathrm{lct}(\mathfrak m_{{\mathbb D}})=\frac{\mathrm{lct}(\mathfrak m_{\mathbb D\setminus J})}{w(J_1)}$, we have $w(J_1)=2$ and $\lceil\mathrm{lct}(\mathfrak m_{{\mathbb D\setminus J}})\rceil-\lceil\mathrm{lct}(\mathfrak m_{\mathbb D})\rceil=1$.

\item
If $\lceil\mathrm{lct}(\mathfrak m_{{\mathbb D}})\rceil=2$, then $m=2$ or $m=3$.

We assume that $m=2$.
Then we may assume that  $\lceil\mathrm{lct}(\mathfrak m_{{\mathbb D}_{J_1}})\rceil=1, \lceil\mathrm{lct}(\mathfrak m_{{\mathbb D}_{J_2}})\rceil=2$.
By Proposition \ref{log canonical threshold}, we have $\mathrm{lct}(\mathfrak m_{{\mathbb D\setminus J}})\le 3$.
Since $\mathrm{lct}(\mathfrak m_{{\mathbb D}})=\frac{\mathrm{lct}(\mathfrak m_{\mathbb D\setminus J})}{w(J_1)}$, we have $w(J_1)=2$ and $\lceil\mathrm{lct}(\mathfrak m_{{\mathbb D\setminus J}})\rceil-\lceil\mathrm{lct}(\mathfrak m_{\mathbb D})\rceil=1$.

We assume that $m=3$. Then $\lceil\mathrm{lct}(\mathfrak m_{{\mathbb D}_{J_1}})\rceil=\cdots=\lceil\mathrm{lct}(\mathfrak m_{{\mathbb D}_{J_3}})\rceil=1$.
By Proposition \ref{log canonical threshold}, we have $\mathrm{lct}(\mathfrak m_{{\mathbb D\setminus J}})=3$.
Since $\mathrm{lct}(\mathfrak m_{{\mathbb D}})=\frac{\mathrm{lct}(\mathfrak m_{\mathbb D\setminus J})}{w(J_1)}$, we have $w(J_1)=2$ and $\lceil\mathrm{lct}(\mathfrak m_{{\mathbb D\setminus J}})\rceil-\lceil\mathrm{lct}(\mathfrak m_{\mathbb D})\rceil=1$.
\end{enumerate}
Therefore this lemma holds.
\end{proof}


\section{relation between $e(R_{\mathbb D})$ and $e(R_{\mathbb D\setminus J})$}

In this section,  we investigate the relation between $e(R_{\mathbb D})$ and $e(R_{\mathbb D\setminus J}).$

\begin{prop}\label{lct=lct}
Let $\mathbb D=(D,w)$ be an $n$-dimensional connected special datum and $J\in D$ be the maximal element of $D$ with $|J|\ge 2$.
Let $J_1,\dots,J_m$ be the elements of $D$ such that $J=J_1\cup\dots\cup J_m$ and
$J_i\sqsubset  J$ for $i=1,\dots,m$.
Then $$e(R_{\mathbb D})\le w(J_1)e(R_{\mathbb D\setminus J}).$$
Moreover, if $\mathrm{lct}(\mathfrak m_{{\mathbb D}})=\frac{\mathrm{lct}(\mathfrak m_{\mathbb D\setminus J})}{w(J_1)}$, then $$e(R_{\mathbb D})=w(J_1)e(R_{\mathbb D\setminus J}).$$
\end{prop}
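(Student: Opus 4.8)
The plan is to apply Lemma~\ref{reduce lemma}. Writing $a=w(J_1)=\dots=w(J_m)$ and $f=x_{J_1}\cdots x_{J_m}$, we get a ring isomorphism $R_{\mathbb D}\cong R_{\mathbb D\setminus J}[Y]/(Y^{a}-f)$, and the first step is to make precise how $\mathfrak m_{\mathbb D}$ transports: under this isomorphism the generator $x_J$ of $R_{\mathbb D}$ corresponds to $Y$, while for $J'\in D$ with $J'\neq J$ the generator $x_{J'}^{w(J')}$ corresponds to the generator $x_{J'}^{v(J')}$ of $R_{\mathbb D\setminus J}$ (here $\mathbb D\setminus J=(D\setminus J,v)$); hence $\mathfrak m_{\mathbb D}$ corresponds to $\mathfrak m_{\mathbb D\setminus J}R_{\mathbb D\setminus J}[Y]/(Y^{a}-f)+(Y)$, and $f$ is the image of the monomial $x_1\cdots x_n$, so $f\in\mathfrak m_{\mathbb D\setminus J}$. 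Put $(S,\mathfrak n)=(R_{\mathbb D\setminus J})_{\mathfrak m_{\mathbb D\setminus J}}$ and $T=S[Y]/(Y^{a}-f)$. Since $T$ is module-finite over the local ring $S$ and $T/\mathfrak nT\cong\mathbb C[Y]/(Y^{a})$ is local, $T$ is local with maximal ideal $\mathfrak M=\mathfrak nT+YT$, and $(R_{\mathbb D})_{\mathfrak m_{\mathbb D}}\cong T$; thus $e(R_{\mathbb D})=e(\mathfrak M,T)$ and $e(R_{\mathbb D\setminus J})=e(\mathfrak n,S)$, both rings having dimension $n$.

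For the inequality, note $T/\mathfrak n^{k}T\cong (S/\mathfrak n^{k})[Y]/(Y^{a}-\overline f)$ is free of rank $a$ over $S/\mathfrak n^{k}$ for every $k$; since $S$ and $T$ have residue field $\mathbb C$ this gives $\lambda_T\bigl(T/(\mathfrak nT)^{k}\bigr)=a\,\lambda_S(S/\mathfrak n^{k})$, so $\mathfrak nT$ is $\mathfrak M$-primary with $e(\mathfrak nT,T)=a\,e(\mathfrak n,S)=w(J_1)\,e(R_{\mathbb D\setminus J})$. As $\mathfrak nT\subseteq\mathfrak M$ and both are $\mathfrak M$-primary, monotonicity of the Hilbert--Samuel multiplicity yields $e(R_{\mathbb D})=e(\mathfrak M,T)\le e(\mathfrak nT,T)=w(J_1)\,e(R_{\mathbb D\setminus J})$.

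For the equality statement it suffices to show that $\mathfrak nT$ is a reduction of $\mathfrak M=\mathfrak nT+YT$, i.e.\ that $Y\in\overline{\mathfrak nT}$, since reductions preserve the multiplicity. I would deduce this from $f\in\overline{\mathfrak n^{a}}$: an integral equation $f^{r}+c_{1}f^{r-1}+\dots+c_{r}=0$ with $c_{i}\in\mathfrak n^{ai}$ becomes, on substituting $f=Y^{a}$, an equation of integral dependence of $Y$ over the ideal $\mathfrak nT$. It remains to prove $f\in\overline{\mathfrak n^{a}}$. The hypothesis $\mathrm{lct}(\mathfrak m_{\mathbb D})=\mathrm{lct}(\mathfrak m_{\mathbb D\setminus J})/w(J_1)$ together with Proposition~\ref{log canonical threshold max 1 lct} forces $\mathrm{lct}(\mathfrak m_{\mathbb D\setminus J})\ge a$; by Lemma~\ref{log canonical threshold lemma} and Corollary~\ref{toric multiplier ideal log canonical threshold} this means $(1,\dots,1)\in a\,\mathrm{Newt}(\mathfrak b_{\mathbb D\setminus J})$, equivalently $x_1\cdots x_n\in\overline{\mathfrak b_{\mathbb D\setminus J}^{\,a}}$ in $\mathbb C[x_1,\dots,x_n]$, where $\mathfrak b_{\mathbb D\setminus J}$ is the monomial ideal generated by the $x_{J'}^{v(J')}$. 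Since $\mathfrak b_{\mathbb D\setminus J}=\mathfrak m_{\mathbb D\setminus J}\mathbb C[x_1,\dots,x_n]$ and $\mathbb C[x_1,\dots,x_n]$ is a module-finite extension of the domain $R_{\mathbb D\setminus J}$, extending valuations of $\mathrm{Frac}(R_{\mathbb D\setminus J})$ along this extension gives $\overline{\mathfrak b_{\mathbb D\setminus J}^{\,a}}\cap R_{\mathbb D\setminus J}=\overline{\mathfrak m_{\mathbb D\setminus J}^{\,a}}$; as $f$ is the image of $x_1\cdots x_n$ in $R_{\mathbb D\setminus J}$, we obtain $f\in\overline{\mathfrak m_{\mathbb D\setminus J}^{\,a}}$, hence $f\in\overline{\mathfrak n^{a}}$ after localizing. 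Therefore $\mathfrak nT$ is a reduction of $\mathfrak M$ and $e(R_{\mathbb D})=e(\mathfrak M,T)=e(\mathfrak nT,T)=w(J_1)\,e(R_{\mathbb D\setminus J})$.

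I expect the main obstacles to be the two bookkeeping points: first, pinning down the correspondence $\mathfrak m_{\mathbb D}\leftrightarrow\mathfrak m_{\mathbb D\setminus J}+(Y)$ under the isomorphism of Lemma~\ref{reduce lemma}, which requires tracking the generators through the presentations of $R_{\mathbb D}$ and $R_{\mathbb D\setminus J}$; and second, the integral-closure identity $\overline{\mathfrak b_{\mathbb D\setminus J}^{\,a}}\cap R_{\mathbb D\setminus J}=\overline{\mathfrak m_{\mathbb D\setminus J}^{\,a}}$, which rests on the fact that every valuation of $\mathrm{Frac}(R_{\mathbb D\setminus J})$ nonnegative on $R_{\mathbb D\setminus J}$ extends to one nonnegative on $\mathbb C[x_1,\dots,x_n]$. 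The remaining ingredients — monotonicity of multiplicity, freeness of $T$ over $S$, and preservation of multiplicity by reductions — are standard.
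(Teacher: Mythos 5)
Your proof is correct and follows essentially the same route as the paper: both start from the presentation $R_{\mathbb D}\cong R_{\mathbb D\setminus J}[Y]/(Y^{a}-x_{J_1}\cdots x_{J_m})$ of Lemma~\ref{reduce lemma}, get the inequality from the rank-$a$ freeness of this extension over $R_{\mathbb D\setminus J}$ (the paper phrases this via a minimal reduction $I$ of $\mathfrak m_{\mathbb D\setminus J}$ and the colength formula for parameter ideals, you via the Hilbert--Samuel function of $\mathfrak m_{\mathbb D\setminus J}T$ and monotonicity), and get the equality by using the Newton-polytope description of the log canonical threshold to show that the ideal extended from below is a reduction of the maximal ideal. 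The only real difference is in how the integrality is certified: the paper exhibits an explicit monomial identity $(x_1\cdots x_n)^{s}=\prod_i\bigl(x_{J_i'}^{w(J_i')}\bigr)^{t_i}$ with $s\le t_1+\cdots+t_k$ directly inside $R_{\mathbb D}$, whereas you pass through $\mathbb C[x_1,\dots,x_n]$ and contract integral closures along the finite extension $R_{\mathbb D\setminus J}\subseteq\mathbb C[x_1,\dots,x_n]$ to get $f\in\overline{\mathfrak m_{\mathbb D\setminus J}^{\,a}}$ and hence $Y\in\overline{\mathfrak m_{\mathbb D\setminus J}T}$; both are valid.
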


\begin{proof}
First we prove that $e(R_{\mathbb D})\le w(J_1)e(R_{\mathbb D\setminus J}).$
Let  $a=w(J_1)$.
Note that  by Lemma \ref{reduce lemma},
\begin{align*}
R_\mathbb D&=\mathbb C[x_{J'}^{w(J')}|J'\in D\setminus J][x_J]\\
&\cong \mathbb C[x_{J'}^{w(J')}|J'\in D\setminus J][Y]/(Y^a-x_{J_1}^a\cdots x_{J_m}^a)\\
&\cong R_{{\mathbb D\setminus J}}[Y]/(Y^a-x_{J_1}\cdots x_{J_m}).
\end{align*}
Let $\mathfrak m=(x_{J'}^{w(J')}|J'\in D\setminus J)\subset \mathbb C[x_{J'}^{w(J')}|J'\in D\setminus J]$ and  $I$ be  a minimal reduction of the maximal ideal  of $\mathbb C[x_{J'}^{w(J')}|J'\in D\setminus J]_{\mathfrak m}$.
Note that $I(R_\mathbb D)_{\mathfrak m_\mathbb D}$ is an ${\mathfrak m_\mathbb D}$-primary ideal of $(R_\mathbb D)_{\mathfrak m_\mathbb D}$.
Then 
\begin{align*}
e(R_\mathbb D) & \le e(I(R_\mathbb D)_{\mathfrak m_\mathbb D}) \\
&=\ell\bigl((R_\mathbb D)_{\mathfrak m_\mathbb D}/I(R_\mathbb D)_{\mathfrak m_\mathbb D}\bigr)\\
 &=\ell\bigl((\mathbb C[x_{J'}^{w(J')}|J'\in D\setminus J]_{\mathfrak m}/I)[Y]/(Y^a-x_{J_1}^a\cdots x_{J_m}^a)\bigr)\\
& =a\ell(\mathbb C[x_{J'}^{w(J')}|J'\in D\setminus J]_{\mathfrak m}/I)\\
&=ae(\mathbb C[x_{J'}^{w(J')}|J'\in D\setminus J]_{\mathfrak m})\\
&=ae(R_{\mathbb D\setminus J}).
\end{align*}
The first equality holds by  Proposition 11.1.10 in \cite{HS}.

Next we prove that if $\mathrm{lct}(\mathfrak m_{{\mathbb D}})=\frac{\mathrm{lct}(\mathfrak m_{\mathbb D\setminus J})}{w(J_1)}$, then $e(R_{\mathbb D})=w(J_1)e(R_{\mathbb D\setminus J}).$
We assume that $\mathrm{lct}(\mathfrak m_{{\mathbb D}})=\frac{\mathrm{lct}(\mathfrak m_{\mathbb D\setminus J})}{w(J_1)}$.
Let   $\boldsymbol{e}_i=(0,\cdots,0,\overset{i}{\check{1}},0,\cdots,0)\in\mathbb R^n$.
By Lemma \ref{log canonical threshold lemma}, there exist $J'_1,\dots,J'_k\in D\setminus J$ and $s,t_1,\dots,t_k\in\mathbb N$ such that 
$$\frac{t_1}{s}+\cdots+\frac{t_k}{s}=\mathrm{lct}(\mathfrak m_{\mathbb D})\ge1\ \mathrm{and}$$
$$\frac{t_1}{s}w(J'_1)\boldsymbol{e}_{J'_1}+\cdots+\frac{t_k}{s}w(J'_k)\boldsymbol{e}_{J'_k}=(1,\dots,1),$$
where $\boldsymbol{e}_{J'_i}=\sum_{j\in J'_i}\boldsymbol{e}_{j}$.
Therefore $$s\le t_1+\cdots+t_k\ \mathrm{and}$$
$$(x_1\cdots x_n)^s=(x_{J'_1}^{w(J'_1)})^{t_1}\cdots (x_{J'_k}^{w(J'_k)})^{t_k}.$$
By the definition of  the integral closure of ideals,   the integral closure of $I(R_\mathbb D)_{\mathfrak m_\mathbb D}=I\mathbb C[x_{J'}^{w(J')}|J'\in D\setminus J][x_J]_{\mathfrak m_\mathbb D}$ contains $x_J=x_1\cdots x_n$.
This implies that  the integral closure of $I(R_\mathbb D)_{\mathfrak m_\mathbb D}$ is the maximal ideal of  $(R_\mathbb D)_{\mathfrak m_\mathbb D}$.
Hence  $I(R_\mathbb D)_{\mathfrak m_\mathbb D}$ is a minimal reduction of the maximal ideal of  $(R_\mathbb D)_{\mathfrak m_\mathbb D}$ (see for example Corollary 8.3.6 and Proposition 8.3.7 in \cite{HS}).
Thus 
\begin{align*}
e(R_\mathbb D) & = \ell\bigl((R_\mathbb D)_{\mathfrak m_\mathbb D}/I(R_\mathbb D)_{\mathfrak m_\mathbb D}\bigr)
\end{align*}
(see for example Proposition 11.2.2 in \cite{HS}).
Therefore by the above discussion, we have $e(R_{\mathbb D})=w(J_1)e(R_{\mathbb D\setminus J}).$

\end{proof}

\begin{defn}
Let $(R,\mathfrak m)$ be a Noetherian local ring of dimension $n$ and $I_1,\dots,I_n$ be $\mathfrak m$-primary ideals.
Let us consider the function $H:\mathbb Z_{\ge0}^n\to\mathbb Z_{\ge 0}$ given by
$$H(r_1,\dots,r_n)=\ell(R/I_1^{r_1}\cdots I_n^{r_n})$$ 
for all $(r_1,\dots,r_n)\in\mathbb Z_{\ge0}^n$.
Then there exists a polynomial $P\in\mathbb Q[x_1,\dots,x_n]$ of degree $n$ such that
$$H(r_1,\dots,r_n)=P(r_1,\dots,r_n),$$
for all sufficiently large $r_1,\dots,r_n\in\mathbb Z_{\ge0}$ and the coefficient of the monomial $x_1\cdots x_n$ in $P$ is an integer.
The coefficient of the monomial $x_1\cdots x_n$ in $P$ is called the mixed multiplicity of $I_1,\dots,I_n$ and is denoted by $e(I_1,\dots,I_n)$.
\end{defn}

Bivi$\mathrm{\acute{a}}$-Ausina generalized the notion of mixed multiplicities and defined the invariant $\sigma_R(I_1,\dots,I_n)$ in \cite{A1}.
We use this invariant in order to prove Proposition \ref{key proposition2}.
\begin{defn}
Let $(R,\mathfrak m)$ be a Noetherian local ring of dimension $n$ and $I_1,\dots,I_n$ be  ideals of $R$.
Then we define 
$$\sigma_R(I_1,\dots,I_n)=\mathrm{max}_{r\in\mathbb Z_{\ge0}}e(I_1+\mathfrak m^r,\dots,I_n+\mathfrak m^r),$$
when the number on right-hand side is finite.
If the set $$\{e(I_1+\mathfrak m^r,\dots,I_n+\mathfrak m^r)|\ r\in\mathbb Z_{\ge0}\}$$ is non-bounded then we set
$\sigma_R(I_1,\dots,I_n)=\infty$.
\end{defn}

Let $I_1,\dots,I_c$ be  ideals of a Noetherian local ring  $(R,\mathfrak m)$ with $k=R/\mathfrak m$ an infinite field.
Let us consider a generating system $a_{i1},\dots,a_{is_i}$ of $I_i$.
Let $s=s_1+\cdots+s_c$.
We say that a property holds for sufficiently general elements of $I_1\oplus\cdots\oplus I_c$ 
if there exists a non-empty Zariski-open set $U$ in $k^s$ such that all elements $(g_1,\dots,g_c)\in I_1\oplus\cdots\oplus I_c$
satisfy the  said property provided that $g_i=\sum_{1\le j\le s_i} u_{ij}a_{ij}$, $i=1,\dots,c$, where $(u_{11},\dots,u_{1s_1},\dots,u_{c1},\dots,u_{cs_c})\in U$.

\begin{prop} $\mathrm{(Proposition\ 2.9\ in}$ \cite{A1}, $\mathrm{See\ Proposition\ 2.2\ in}$ \cite{A2}$\mathrm{)}$\label{lemma1 of A2}
Let $I_1,\dots,I_n$ be  ideals of an $n$-dimensional Noetherian local ring  $(R,\mathfrak m)$ with $R/\mathfrak m$ an infinite field.
Then $\sigma_R(I_1,\dots,I_n)<\infty$ if and only if there exist elements $g_i\in I_i$ for $i=1,\dots,n$ such that $(g_1,\dots,g_n)$ is an $\mathfrak m$-primary ideal.
In this case, we have that $\sigma_R(I_1,\dots,I_n)=e(g_1,\dots,g_n)$ for sufficiently general elements $(g_1,\dots,g_n)\in I_1\oplus\cdots\oplus I_n$, where $e(g_1,\dots,g_n)$ is the Hilbert-Samuel multiplicity of the ideal of $R$ generated by $g_1,\dots,g_n$.
\end{prop}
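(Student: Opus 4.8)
The plan is to reduce everything to the classical theory of joint reductions of $\mathfrak m$-primary ideals and then to argue by cases, according to whether a general $n$-tuple in $I_1\oplus\cdots\oplus I_n$ generates an $\mathfrak m$-primary ideal. First the reductions: mixed multiplicities of $\mathfrak m$-primary ideals are non-increasing under enlarging any argument, so since $\mathfrak m^r\supseteq\mathfrak m^{r+1}$ the sequence $e_r:=e(I_1+\mathfrak m^r,\dots,I_n+\mathfrak m^r)$ is non-decreasing; hence $\sigma_R(I_1,\dots,I_n)=\sup_{r}e_r=\lim_{r\to\infty}e_r$, and $\sigma_R(I_1,\dots,I_n)<\infty$ means exactly that $(e_r)_r$ stabilizes. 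Because $R/\mathfrak m$ is infinite, Rees's theorem on joint reductions gives, for each $r$, a joint reduction $(h_1,\dots,h_n)$ of $(I_1+\mathfrak m^r,\dots,I_n+\mathfrak m^r)$ realized by a sufficiently general $n$-tuple $h_i\in I_i+\mathfrak m^r$, so $(h_1,\dots,h_n)$ is $\mathfrak m$-primary with $e_r=e(h_1,\dots,h_n)$; such a general $h_i$ has the form $h_i=g_i+m_i$ with $g_i$ a general element of $I_i$ and $m_i\in\mathfrak m^r$, and $(g_1,\dots,g_n)$ is then a general tuple in $I_1\oplus\cdots\oplus I_n$. Finally, the locus of $(g_1,\dots,g_n)\in I_1\oplus\cdots\oplus I_n$ generating an $\mathfrak m$-primary ideal is Zariski open, hence non-empty exactly when it contains a general tuple, so ``there exist such $g_i$'' is equivalent to ``general $(g_i)$ have this property'' --- the form used in the last clause.

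\emph{Case 1: a general tuple generates an $\mathfrak m$-primary ideal.} Then for general $(g_i)$ the ideal $\mathfrak q:=(g_1,\dots,g_n)$ is $\mathfrak m$-primary; choose $r$ large enough --- uniformly over general tuples, using generic constancy of the relevant Hilbert functions --- that $\mathfrak m^r\subseteq\mathfrak m\mathfrak q$. In the joint reduction $(h_1,\dots,h_n)$ of $(I_1+\mathfrak m^r,\dots,I_n+\mathfrak m^r)$, written $h_i=g_i+m_i$ as above, we then have $m_i\in\mathfrak m^r\subseteq\mathfrak m\mathfrak q\subseteq\mathfrak q$, so $(h_1,\dots,h_n)\subseteq\mathfrak q$ and $(h_1,\dots,h_n)+\mathfrak m\mathfrak q=(g_1,\dots,g_n)+\mathfrak m\mathfrak q=\mathfrak q$; by Nakayama's lemma $(h_1,\dots,h_n)=\mathfrak q$. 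Hence $e_r=e(\mathfrak q;R)=e(g_1,\dots,g_n)$, and since the multiplicity of a general tuple is a fixed number, $(e_r)_r$ is eventually constant and $\sigma_R(I_1,\dots,I_n)=e(g_1,\dots,g_n)<\infty$ for general $(g_i)$. (For an arbitrary $\mathfrak m$-primary choice $g_i\in I_i$ one still obtains $\sigma_R(I_1,\dots,I_n)\le e(g_1,\dots,g_n)$ from Rees's inequality $e(a_1,\dots,a_n)\ge e(J_1,\dots,J_n)$ for $a_i\in J_i$.) In particular, if some $\mathfrak m$-primary $(g_1,\dots,g_n)$ with $g_i\in I_i$ exists, then by the openness above we are in Case 1, and $\sigma_R(I_1,\dots,I_n)<\infty$.

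\emph{Case 2: a general tuple does not generate an $\mathfrak m$-primary ideal} --- equivalently, by openness, no such $\mathfrak m$-primary ideal exists at all. Then $d:=\dim R/(g_1,\dots,g_n)\ge 1$ for general $(g_i)$, and since $m_i\in\mathfrak m^r$ we have $(h_1,\dots,h_n)\subseteq(g_1,\dots,g_n)+\mathfrak m^r$, a containment of $\mathfrak m$-primary ideals, so $e_r=e(h_1,\dots,h_n)\ge e\bigl((g_1,\dots,g_n)+\mathfrak m^r;R\bigr)$. As every $R_{\mathbb D}$ is Cohen--Macaulay, a minimal reduction $J$ of $(g_1,\dots,g_n)+\mathfrak m^r$ is generated by a system of parameters, so $e\bigl((g_1,\dots,g_n)+\mathfrak m^r;R\bigr)=\ell(R/J)\ge\ell\bigl(R/((g_1,\dots,g_n)+\mathfrak m^r)\bigr)$, and this last length grows like $r^{d}$ as $r\to\infty$ since $\dim R/(g_1,\dots,g_n)=d\ge 1$. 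Hence $e_r\to\infty$, i.e.\ $\sigma_R(I_1,\dots,I_n)=\infty$. Together the two cases give both implications of the proposition, as well as $\sigma_R(I_1,\dots,I_n)=e(g_1,\dots,g_n)$ for sufficiently general $(g_i)$.

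The steps that will need the most care are the two multiplicity comparisons. In Case 1 one must make the Nakayama argument uniform, choosing $r$ large independently of the general tuple so that $\mathfrak m^r$ has sunk inside $\mathfrak m\mathfrak q$; this rests on the generic constancy of the relevant Hilbert functions. In Case 2 the divergence is transparent only because $R$ is Cohen--Macaulay: for a general equidimensional $R$ one would instead use the associativity formula to reduce to a domain of dimension $n$ on which the image of $(g_1,\dots,g_n)$ still cuts out a positive-dimensional locus, together with the multilinearity of mixed multiplicities under powers. The rest is bookkeeping with general elements and Rees's theory of joint reductions.
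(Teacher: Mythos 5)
First, a point of reference: the paper gives no proof of this proposition at all --- it is quoted from Bivi\'a-Ausina (\cite{A1}, \cite{A2}) --- so your argument can only be judged on its own terms. Your overall strategy (monotonicity of $e_r=e(I_1+\mathfrak m^r,\dots,I_n+\mathfrak m^r)$, Rees's joint-reduction theorem to realize $e_r$ as $e((h_1,\dots,h_n))$ with $h_i=g_i+m_i$, openness of the $\mathfrak m$-primary locus, Nakayama to identify $(h_1,\dots,h_n)$ with $\mathfrak q=(g_1,\dots,g_n)$ once $\mathfrak m^r\subseteq\mathfrak m\mathfrak q$, and growth of the Hilbert function of $R/\mathfrak q$ in the degenerate case) is the right one, and Case 1 is essentially complete. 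The ``uniform $r$'' worry you flag there is real but disposable: for each fixed large $r$ it is enough that $\mathfrak m^r\subseteq\mathfrak m\mathfrak q(u)$ hold on a dense open set $U_r$ of parameters (which upper semicontinuity of $u\mapsto\ell(R/(\mathfrak q(u)+\mathfrak m^N))$ provides), and then $e_r=e_{r'}$ follows by intersecting $U_r$ with $U_{r'}$.

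The genuine gap is in Case 2, and it is not the cosmetic one you suggest. You invoke ``every $R_{\mathbb D}$ is Cohen--Macaulay'' to get $e((g_1,\dots,g_n)+\mathfrak m^r)\ge\ell(R/((g_1,\dots,g_n)+\mathfrak m^r))$, but the proposition concerns an arbitrary $n$-dimensional Noetherian local ring (and in this paper it is applied to the regular local ring $A=\mathbb C[y_1,\dots,y_{n+c},Y]_{\mathfrak n}$, not to $R_{\mathbb D}$). Without an equidimensionality hypothesis the statement itself is false, so no repair of Case 2 can avoid adding one. Concretely, take $R=k[[x,y,z]]/(xz,yz)$ with $k$ infinite, so $n=\dim R=2$ and the minimal primes $(z)$ and $(x,y)$ have dimensions $2$ and $1$, and set $I_1=I_2=(x,y)$. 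No pair $g_1,g_2\in(x,y)$ generates an $\mathfrak m$-primary ideal, since $(g_1,g_2)\subseteq(x,y)$ and $\dim R/(x,y)=1$; yet the associativity formula localizes all multiplicities to the two-dimensional component $R/(z)\cong k[[x,y]]$, giving $e(I_1+\mathfrak m^r,I_2+\mathfrak m^r)=e((x,y)+\mathfrak m^r;R)=e((x,y);k[[x,y]])=1$ for every $r$, whence $\sigma_R(I_1,I_2)=1<\infty$. So the hypothesis must be stated and used honestly: for $R$ Cohen--Macaulay (which covers every use in this paper) Case 2 works exactly as you wrote it, and for formally equidimensional $R$ one can argue via $\overline{\mathfrak q}=\bigcap_r\overline{\mathfrak q+\mathfrak m^r}$ and Rees's multiplicity theorem --- which is more than the ``associativity formula'' you gesture at in your closing paragraph. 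With such a hypothesis added, your proof is correct.
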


\begin{lem}  $\mathrm{(Corollary\ 2.5, Lemma\ 2.6 \ in}$ \cite{A2}$\mathrm{)}$\label{lemma2 of A2}
Let  $(R,\mathfrak m)$ be an $n$-dimensional Noetherian local ring with $R/\mathfrak m$ an infinite field.
Let $I_1,\dots,I_n$ be  ideals of $R$ with $\sigma_R(I_1,\dots,I_n)<\infty$.
Then \begin{enumerate}
\item[$\mathrm{(1)}$]
Let $J_1,\dots,J_n$ be ideals of $R$ such that $J_i\subset I_i$ for all $i=1,\dots,n$ and $\sigma_R(J_1,\dots,J_n)<\infty$.
Then$$\sigma_R(J_1,\dots,J_n)\ge\sigma_R(I_1,\dots,I_n).$$
\item[$\mathrm{(2)}$]
For all $r_1,\dots,r_n\in \mathbb N$, 
$$\sigma_R(I_1^{r_1},\dots,I_n^{r_n})<\infty\ \  \mbox{and}\ \ \sigma_R(I_1^{r_1},\dots,I_n^{r_n})=r_1\cdots r_n\sigma_R(I_1,\dots,I_n).$$
\end{enumerate}

\end{lem}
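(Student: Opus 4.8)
The plan is to reduce both assertions to two standard properties of the mixed multiplicity $e(A_1,\dots,A_n)$ of an $n$-tuple of $\mathfrak m$-primary ideals in an $n$-dimensional Noetherian local ring $(R,\mathfrak m)$: \emph{monotonicity} ($e(A_1,\dots,A_n)\ge e(B_1,\dots,B_n)$ whenever $A_i\subseteq B_i$ for all $i$) and \emph{homogeneity} ($e(A_1^{c_1},\dots,A_n^{c_n})=c_1\cdots c_n\,e(A_1,\dots,A_n)$ for $c_1,\dots,c_n\in\mathbb N$). Homogeneity is immediate from the definition: if $P$ is the polynomial with $\ell(R/A_1^{r_1}\cdots A_n^{r_n})=P(r_1,\dots,r_n)$ for $r_i\gg0$, then since $(A_i^{c_i})^{s_i}=A_i^{c_is_i}$ the corresponding polynomial for the tuple $(A_1^{c_1},\dots,A_n^{c_n})$ is $(s_1,\dots,s_n)\mapsto P(c_1s_1,\dots,c_ns_n)$, whose coefficient of $s_1\cdots s_n$ is $c_1\cdots c_n$ times that of $P$. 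Monotonicity does \emph{not} follow formally (a homogeneous polynomial with nonnegative coefficients dominating another on the positive orthant need not dominate it coefficientwise), so I would simply quote it; it is contained in \cite{A2} and also follows from the theory of joint reductions and superficial sequences, cf.\ \cite{HS}. Since every $I_i+\mathfrak m^r$ is $\mathfrak m$-primary, both facts apply to the ideals occurring in the definition of $\sigma_R$; in particular $I_j+\mathfrak m^{r}\subseteq I_j+\mathfrak m^{r'}$ for $r\ge r'$, so by monotonicity $r\mapsto e(I_1+\mathfrak m^{r},\dots,I_n+\mathfrak m^{r})$ is non-decreasing, hence — being a sequence of positive integers bounded above by the finite number $\sigma_R(I_1,\dots,I_n)$ — eventually constant and equal to $\sigma_R(I_1,\dots,I_n)$; the same holds for $e(I_1+\mathfrak m^{t_1},\dots,I_n+\mathfrak m^{t_n})$ once all the $t_i$ are large, by monotonicity slot by slot.

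For (1): from $J_i\subseteq I_i$ we get $J_i+\mathfrak m^r\subseteq I_i+\mathfrak m^r$ with both sides $\mathfrak m$-primary, so monotonicity gives $e(J_1+\mathfrak m^r,\dots,J_n+\mathfrak m^r)\ge e(I_1+\mathfrak m^r,\dots,I_n+\mathfrak m^r)$ for every $r$. Taking $r$ large, so that the right-hand side equals $\sigma_R(I_1,\dots,I_n)$, and using $\sigma_R(J_1,\dots,J_n)\ge e(J_1+\mathfrak m^r,\dots,J_n+\mathfrak m^r)$, we obtain $\sigma_R(J_1,\dots,J_n)\ge\sigma_R(I_1,\dots,I_n)$.

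For (2): finiteness first. By Proposition \ref{lemma1 of A2} there are $g_i\in I_i$ with $(g_1,\dots,g_n)$ an $\mathfrak m$-primary ideal; then $g_i^{r_i}\in I_i^{r_i}$ and $(g_1^{r_1},\dots,g_n^{r_n})$ is again $\mathfrak m$-primary (it has the same radical as $(g_1,\dots,g_n)$), so Proposition \ref{lemma1 of A2} gives $\sigma_R(I_1^{r_1},\dots,I_n^{r_n})<\infty$. For the value, the key is the chain of $\mathfrak m$-primary ideals
$$(I_i+\mathfrak m^s)^{r_i}\ \subseteq\ I_i^{r_i}+\mathfrak m^s\ \subseteq\ (I_i+\mathfrak m^{\lfloor s/r_i\rfloor})^{r_i},$$
the first inclusion because $(I_i+\mathfrak m^s)^{r_i}=\sum_{k=0}^{r_i}I_i^k\mathfrak m^{s(r_i-k)}\subseteq I_i^{r_i}+\mathfrak m^s$, the second because $\mathfrak m^s\subseteq(\mathfrak m^{\lfloor s/r_i\rfloor})^{r_i}\subseteq(I_i+\mathfrak m^{\lfloor s/r_i\rfloor})^{r_i}$. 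Applying monotonicity along the chain and homogeneity to the two outer terms gives, for every $s$,
$$r_1\cdots r_n\,e(I_1+\mathfrak m^s,\dots,I_n+\mathfrak m^s)\ \ge\ e(I_1^{r_1}+\mathfrak m^s,\dots,I_n^{r_n}+\mathfrak m^s)\ \ge\ r_1\cdots r_n\,e(I_1+\mathfrak m^{\lfloor s/r_1\rfloor},\dots,I_n+\mathfrak m^{\lfloor s/r_n\rfloor}).$$
Choosing $s$ so large that $s$ and all the $\lfloor s/r_i\rfloor$ lie past the stabilization threshold of the first paragraph makes both outer terms equal $r_1\cdots r_n\,\sigma_R(I_1,\dots,I_n)$, hence $e(I_1^{r_1}+\mathfrak m^s,\dots,I_n^{r_n}+\mathfrak m^s)=r_1\cdots r_n\,\sigma_R(I_1,\dots,I_n)$ for all such $s$; as this left-hand side is non-decreasing in $s$ and $\sigma_R(I_1^{r_1},\dots,I_n^{r_n})$ is its supremum, we conclude $\sigma_R(I_1^{r_1},\dots,I_n^{r_n})=r_1\cdots r_n\,\sigma_R(I_1,\dots,I_n)$.

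The only genuinely delicate point is the appeal to monotonicity of mixed multiplicities of $\mathfrak m$-primary ideals, since it has to be available in the present generality (an arbitrary Noetherian local $R$, with no equidimensionality assumption) and is not a formal consequence of the definition; once that is granted, the remainder is bookkeeping with the two exponents $s$ and $\lfloor s/r_i\rfloor$ and with the stabilization of $e(I_1+\mathfrak m^r,\dots,I_n+\mathfrak m^r)$ for large $r$.
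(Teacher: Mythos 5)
Your argument is correct, but note that the paper does not prove this lemma at all: it is imported verbatim from Bivi\'a-Ausina (Corollary 2.5 and Lemma 2.6 of \cite{A2}), so there is no in-paper proof to match. What you have done is reduce the cited statement to two more primitive facts about mixed multiplicities of $\mathfrak m$-primary ideals --- homogeneity, which you correctly observe is formal from the definition (substituting $x_i\mapsto c_is_i$ into the Hilbert polynomial only rescales the $x_1\cdots x_n$ coefficient by $c_1\cdots c_n$), and monotonicity, which you rightly flag as the one genuinely non-formal input and quote. Granting monotonicity, the rest is sound: the stabilization of $r\mapsto e(I_1+\mathfrak m^r,\dots,I_n+\mathfrak m^r)$ at the value $\sigma_R(I_1,\dots,I_n)$ (a bounded non-decreasing integer sequence), the slotwise version for exponents $t_1,\dots,t_n$, part (1) by comparing at a single large $r$, the finiteness in part (2) via Proposition \ref{lemma1 of A2} and the radical of $(g_1^{r_1},\dots,g_n^{r_n})$, and the sandwich $(I_i+\mathfrak m^s)^{r_i}\subseteq I_i^{r_i}+\mathfrak m^s\subseteq (I_i+\mathfrak m^{\lfloor s/r_i\rfloor})^{r_i}$, whose two inclusions you verify correctly and which pins the middle term between two quantities both equal to $r_1\cdots r_n\,\sigma_R(I_1,\dots,I_n)$ for $s\gg 0$. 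This buys a self-contained derivation modulo one standard theorem (provable by superficial elements or joint reductions, and available for an arbitrary Noetherian local ring with infinite residue field), whereas the paper simply outsources the whole statement; the trade-off is that your proof is only as unconditional as the monotonicity you quote, which is essentially the content of the very results in \cite{A2} being cited.
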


\begin{lem} \label{lemma2 of A2 integral closure}
Let  $(R,\mathfrak m)$ be an $n$-dimensional Noetherian local ring with $R/\mathfrak m$ an infinite field.
Let $I_1,\dots,I_n$ be  ideals of $R$ with $\sigma_R(I_1,\dots,I_n)<\infty$.
Then 
$$\sigma_R(\overline{I_1},\dots,\overline{I_n})=\sigma_R(I_1,\dots,I_n),$$
where $\overline{I_i}$ is the integral closure of $I_i$.
\end{lem}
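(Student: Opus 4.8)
The plan is to reduce the statement about integral closures to the already-established properties of $\sigma_R$ — namely Lemma \ref{lemma2 of A2}(1) (monotonicity) and Lemma \ref{lemma2 of A2}(2) (behaviour under powers) — together with the standard fact that an ideal and its integral closure have the same multiplicity and, more importantly, the same behaviour with respect to minimal reductions. The first inequality is immediate: since $I_i\subseteq\overline{I_i}$ for each $i$, part (1) of Lemma \ref{lemma2 of A2} gives $\sigma_R(\overline{I_1},\dots,\overline{I_n})\le\sigma_R(I_1,\dots,I_n)$; in particular $\sigma_R(\overline{I_1},\dots,\overline{I_n})<\infty$ as well.

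For the reverse inequality I would use Proposition \ref{lemma1 of A2}, which identifies $\sigma_R$ with a genuine Hilbert–Samuel multiplicity of a sufficiently general system of parameters drawn from the ideals. Concretely, choose sufficiently general $\overline{g}_i\in\overline{I_i}$ so that $(\overline{g}_1,\dots,\overline{g}_n)$ is $\mathfrak m$-primary and $\sigma_R(\overline{I_1},\dots,\overline{I_n})=e(\overline{g}_1,\dots,\overline{g}_n)$. The point is that each $\overline{g}_i$, being integral over $I_i$, satisfies an equation $\overline{g}_i^{k}+c_1\overline{g}_i^{k-1}+\dots+c_k=0$ with $c_j\in I_i^{j}$, so $\overline{g}_i^{k_i}$ lies in $(\overline{g}_1,\dots,\overline{g}_n)$-adic closure controlled by $I_i$; more cleanly, one replaces each $\overline{g}_i$ by a suitable power and invokes Lemma \ref{lemma2 of A2}(2) to unwind the resulting factor $k_1\cdots k_n$. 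Alternatively — and I think this is the slicker route — I would note that $\overline{I_i}$ and $I_i$ have the same integral closure, hence for a minimal reduction $\mathfrak{r}_i\subseteq I_i$ one has $\mathfrak{r}_i\subseteq I_i\subseteq\overline{I_i}$ with $\overline{\mathfrak{r}_i}=\overline{I_i}$, and then use that $\sigma_R$ depends only on integral closures because mixed multiplicities $e(\,\cdot+\mathfrak m^r,\dots)$ are invariant under passing to the integral closure of each argument (a result of Rees/Teissier-type, already implicit in the mixed-multiplicity literature the paper cites). Combining $\sigma_R(\overline{I_1},\dots,\overline{I_n})\le\sigma_R(I_1,\dots,I_n)$ with $\sigma_R(I_1,\dots,I_n)\le\sigma_R(\mathfrak{r}_1,\dots,\mathfrak{r}_n)=\sigma_R(\overline{I_1},\dots,\overline{I_n})$ closes the loop.

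The main obstacle is making the reverse inequality rigorous without circularity: one must be sure that the "sufficiently general element" statement of Proposition \ref{lemma1 of A2} interacts correctly with integral closure, i.e. that a sufficiently general element of $\overline{I_i}$ can be compared to a sufficiently general element of $I_i$. The cleanest fix is to observe that for each $i$ there is an integer $k_i$ with $\overline{I_i}^{\,k_i}\subseteq I_i$ (true because $\overline{I_i}$ is a finite $R$-module over the Rees algebra of $I_i$, so $\overline{I_i}^{\,k}=I_i\,\overline{I_i}^{\,k-1}$ for $k\gg 0$, giving $\overline{I_i}^{\,k}\subseteq I_i$). Then by Lemma \ref{lemma2 of A2}(1) applied to $I_i^{k_i}\subseteq\overline{I_i}^{\,k_i}\subseteq I_i$ and Lemma \ref{lemma2 of A2}(2),
\begin{align*}
k_1\cdots k_n\,\sigma_R(I_1,\dots,I_n)&=\sigma_R(I_1^{k_1},\dots,I_n^{k_n})\\
&\ge\sigma_R(\overline{I_1}^{\,k_1},\dots,\overline{I_n}^{\,k_n})\\
&\ge\sigma_R(I_1,\dots,I_n)
\end{align*}
while simultaneously $\sigma_R(\overline{I_1}^{\,k_1},\dots,\overline{I_n}^{\,k_n})=k_1\cdots k_n\,\sigma_R(\overline{I_1},\dots,\overline{I_n})$ by Lemma \ref{lemma2 of A2}(2) again. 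Dividing by $k_1\cdots k_n$ yields $\sigma_R(\overline{I_1},\dots,\overline{I_n})\ge\sigma_R(I_1,\dots,I_n)$, and combined with the trivial inequality from the first paragraph this gives the claimed equality. I would write the argument in exactly this order: trivial direction first, then the power trick with the two applications of Lemma \ref{lemma2 of A2}, and finish by sandwiching.
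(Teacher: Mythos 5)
Your setup is mostly sound: the easy inequality $\sigma_R(\overline{I_1},\dots,\overline{I_n})\le\sigma_R(I_1,\dots,I_n)$ follows from Lemma \ref{lemma2 of A2}(1) as you say, and the containment $\overline{I_i}^{\,k_i}\subseteq I_i$ for $k_i\gg 0$ is a correct consequence of the module-finiteness of $R[\overline{I_i}t]$ over $R[I_it]$. The gap is in the very last step of your ``power trick.'' Your chain, rewritten using the homogeneity from Lemma \ref{lemma2 of A2}(2), reads
$$k_1\cdots k_n\,\sigma_R(I_1,\dots,I_n)\;\ge\;k_1\cdots k_n\,\sigma_R(\overline{I_1},\dots,\overline{I_n})\;\ge\;\sigma_R(I_1,\dots,I_n).$$
Dividing by $k_1\cdots k_n$ turns the first inequality back into the trivial direction and turns the second into $\sigma_R(\overline{I_1},\dots,\overline{I_n})\ge\frac{1}{k_1\cdots k_n}\sigma_R(I_1,\dots,I_n)$, which is strictly weaker than what you need unless $k_1\cdots k_n=1$. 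The sandwich does not close: the multiplicative scaling of $\sigma_R$ under powers exactly cancels the gain from $\overline{I_i}^{\,k_i}\subseteq I_i$, because that containment compares a high power of $\overline{I_i}$ with only the first power of $I_i$. No choice of exponents repairs this, so the reverse inequality is not established by monotonicity and homogeneity alone.

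The missing ingredient is the Rees-type theorem that mixed multiplicities are unchanged when each ideal is replaced by one with the same integral closure, and this is in fact the paper's entire (one-line) proof: since $I_i+\mathfrak m^r\subseteq\overline{I_i}+\mathfrak m^r\subseteq\overline{I_i+\mathfrak m^r}$, the ideals $I_i+\mathfrak m^r$ and $\overline{I_i}+\mathfrak m^r$ have the same integral closure, hence $e(I_1+\mathfrak m^r,\dots,I_n+\mathfrak m^r)=e(\overline{I_1}+\mathfrak m^r,\dots,\overline{I_n}+\mathfrak m^r)$ for every $r$ by Theorem 17.4.9 in \cite{HS}, and the two suprema defining $\sigma_R$ coincide. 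Your ``alternative slicker route'' gestures at exactly this, but as phrased (``use that $\sigma_R$ depends only on integral closures'') it assumes the statement being proved; the actual content lives in the cited theorem on mixed multiplicities, not in anything derivable from Lemma \ref{lemma2 of A2}. You should promote that alternative to the main argument and drop the power trick.
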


\begin{proof}
This lemma follows from $e(I_1+\mathfrak m^r,\dots,I_n+\mathfrak m^r)=e(\overline{I_1}+\mathfrak m^r,\dots,\overline{I_n}+\mathfrak m^r)$ for any $r$ (See Theorem 17.4.9 in \cite{HS}).
\end{proof}

\begin{lem}\label{sigma multiplicity lemma}
Let $A=\mathbb C[y_1,\dots,y_{n+c},Y]_{(y_1,\dots,y_{n+c},Y)}$,
$f_1,\dots,f_{c},f$ be elements of the ideal $(y_1,\dots,y_{n+c})$ of $A$ and
 $R=\mathbb C[y_1,\dots,y_{n+c}]/(f_1,\dots,f_{c})$.
Let $a$ be a natural number and  $S=R[Y]/(Y^{a}-f)$.
We assume that $R$ and $S$ are  $n$-dimensional complete intersection rings.
Let  $\mathfrak n=(y_1,\dots,y_{n+c},Y)$ be the maximal ideal of $A$ and  $\mathfrak m=(y_1,\dots,y_{n+c},Y)$ be the maximal ideal of $S_{(y_1,\dots,y_{n+c},Y)}$.
Then 
$$e(S_\mathfrak m)=\sigma_A((f_1),\dots,(f_c),(Y^{a},f),\mathfrak n,\dots,\mathfrak n).$$

\end{lem}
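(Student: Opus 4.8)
The plan is to express the Hilbert--Samuel multiplicity of $S_{\mathfrak m}$ as a mixed multiplicity computed in the ambient regular local ring $A$, and then to recognize that this mixed multiplicity is exactly the invariant $\sigma_A$ appearing on the right-hand side. First I would recall that $S = R[Y]/(Y^a - f)$ with $R = \mathbb C[y_1,\dots,y_{n+c}]/(f_1,\dots,f_c)$, so that $S_{\mathfrak m}$ is the quotient of the $(n+c+1)$-dimensional regular local ring $A$ by the regular sequence $f_1,\dots,f_c, Y^a - f$; this uses the hypothesis that $R$ and $S$ are $n$-dimensional complete intersections, which forces $f_1,\dots,f_c,Y^a-f$ to be an $A$-regular sequence. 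Since $A$ is regular and $S_{\mathfrak m}$ is a complete intersection cut out by $c+1$ equations inside the $(n+c+1)$-dimensional $A$, the codimension is $c+1$ and we need $c+1$ more generic ``linear'' cuts (i.e.\ generic elements of $\mathfrak n$) to reduce to an $\mathfrak n$-primary ideal; this matches the $n$ copies of $\mathfrak n$ in $\sigma_A((f_1),\dots,(f_c),(Y^a,f),\mathfrak n,\dots,\mathfrak n)$, since $(c+1)+n = n+c+1 = \dim A$.

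The key computational input is the standard fact that for a complete intersection $S_{\mathfrak m} = A/(g_1,\dots,g_r)$ with $g_i \in \mathfrak n$ forming a regular sequence, and for generic elements $h_1,\dots,h_{n}$ of $\mathfrak n$ whose images generate an $\mathfrak m$-primary ideal of $S_{\mathfrak m}$, one has $e(S_{\mathfrak m}) = e\big((g_1,\dots,g_r,h_1,\dots,h_{n})A\big)$, which in turn equals the mixed multiplicity $e\big((g_1),\dots,(g_r),\mathfrak n,\dots,\mathfrak n\big)$ of these ideals in $A$ for sufficiently general choices. So I would apply this with $r = c+1$, $(g_1,\dots,g_{c}) = (f_1,\dots,f_c)$, and $g_{c+1} \in (Y^a, f)$: here the point is that $Y^a - f$ is a particular (not necessarily generic) element of the ideal $(Y^a,f)$, and by Proposition~\ref{lemma1 of A2} the mixed multiplicity $e((f_1),\dots,(f_c),(Y^a,f),\mathfrak n,\dots,\mathfrak n)$ is computed by \emph{sufficiently general} elements of $(f_1)\oplus\cdots\oplus(f_c)\oplus(Y^a,f)\oplus\mathfrak n\oplus\cdots\oplus\mathfrak n$, and $\sigma_A$ is its maximum over the $\mathfrak n$-adic truncations. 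I would invoke Lemma~\ref{lemma2 of A2 integral closure} (and the monotonicity in Lemma~\ref{lemma2 of A2}(1), applied to the principal ideals $(f_i)\subseteq(f_i)$ and $(Y^a-f)\subseteq(Y^a,f)$) to see that replacing the generic element of $(Y^a,f)$ by the specific element $Y^a-f$ does not change the value, because $(Y^a-f)$ together with $f_1,\dots,f_c$ and generic linear forms already yields an $\mathfrak n$-primary ideal of the right colength — indeed its integral closure situation is controlled by the fact that $\sigma$ is insensitive to integral closure and that $(f_1,\dots,f_c,Y^a-f)$ is a genuine complete intersection.

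The main obstacle I anticipate is the careful bookkeeping needed to justify that the specific element $Y^a - f \in (Y^a,f)$ may be used in place of a sufficiently general element when evaluating the mixed multiplicity: a priori a special element can only increase the mixed multiplicity (by the semicontinuity built into Proposition~\ref{lemma1 of A2}, since generic elements minimize it), so one gets $e(S_{\mathfrak m}) \ge \sigma_A(\dots)$ for free, and the reverse inequality requires showing that $Y^a - f$ is already general enough. For this I would argue that $e(S_{\mathfrak m})$ equals the colength $\ell\big(A/(f_1,\dots,f_c,Y^a-f,h_1,\dots,h_n)\big)$ for generic linear forms $h_j$ (using that $S_{\mathfrak m}$ is Cohen--Macaulay, so a system of parameters is a regular sequence and its colength equals the multiplicity), and then compare this colength directly with $e((f_1),\dots,(f_c),(Y^a,f),\mathfrak n,\dots,\mathfrak n)$ by expressing the latter via Lemma~\ref{lemma2 of A2}(1)--(2) and the defining formula of $\sigma_A$. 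Once both inequalities are in hand the equality follows. The remaining steps — checking dimension counts, that the relevant truncation-stabilized value is attained, and that all ideals involved are $\mathfrak n$-primary after the generic cuts — are routine given the complete intersection hypotheses.
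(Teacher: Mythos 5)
You correctly identify the crux --- that the specific element $Y^a-f$ must be shown to compute the same value as a sufficiently general element of $(Y^a,f)$ --- and the easy inequality $e(S_{\mathfrak m})\ge\sigma_A((f_1),\dots,(f_c),(Y^{a},f),\mathfrak n,\dots,\mathfrak n)$ does follow from the monotonicity in Lemma \ref{lemma2 of A2}(1) as you say. But your route to the reverse inequality does not close the gap. The appeal to Lemma \ref{lemma2 of A2 integral closure} cannot work here: $(Y^a-f)$ is a principal ideal of height one, while $(Y^a,f)$ has height two, so their integral closures are genuinely different and the insensitivity of $\sigma$ to integral closure says nothing about replacing one by the other. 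Likewise Lemma \ref{lemma2 of A2}(1)--(2) only ever produce inequalities in the direction you already have, and a ``direct comparison of colengths'' between $\ell\bigl(A/(f_1,\dots,f_c,Y^a-f,h_1,\dots,h_n)\bigr)$ and the generic mixed multiplicity is precisely the statement to be proved, so as written the argument is circular at this point.

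The idea you are missing is the one the paper uses. By Proposition \ref{lemma1 of A2}, $\sigma_A((f_1),\dots,(f_c),(Y^{a},f),\mathfrak n,\dots,\mathfrak n)=e(f_1,\dots,f_c,sY^{a}+tf,h_1,\dots,h_n)$ for a sufficiently general element $sY^a+tf$ of $(Y^a,f)$ and general $h_i\in\mathfrak n$, and one may assume $s,t$ are both nonzero. Over $\mathbb C$ one can then rescale $Y\mapsto\lambda Y$ with $\lambda^a=-t/s$, which gives an isomorphism $R[Y]/(sY^a+tf)_{(y_1,\dots,y_{n+c},Y)}\cong R[Y]/(Y^a-f)_{(y_1,\dots,y_{n+c},Y)}=S_{\mathfrak m}$. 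Hence the \emph{generic} element of $(Y^a,f)$ already cuts out a local ring isomorphic to $S_{\mathfrak m}$, and the generic value of the mixed multiplicity equals $e(S_{\mathfrak m})$ exactly (using, as you do, that the images of general $h_1,\dots,h_n$ form a minimal reduction of the maximal ideal of this Cohen--Macaulay ring, so the colength equals the multiplicity). Without this rescaling observation the inequality $e(S_{\mathfrak m})\le\sigma_A(\cdots)$ is not established, so the proposal has a genuine gap.
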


\begin{proof}
By  Proposition \ref{lemma1 of A2}, 
$$\sigma_A((f_1),\dots,(f_c),(Y^{a},f),\mathfrak n,\dots,\mathfrak n)=e(f_1,\dots,f_c,sY^{a}+tf,h_1,\dots,h_n)$$
for  sufficiently general elements  $(sY^a+tf,h_1,\dots,h_n)\in(Y^a,f)\oplus\mathfrak n\oplus\cdots\oplus \mathfrak n$.
Fix such $s,t$. We may assume that $s$ and $t$ are nonzero.
Then 
$$e(f_1,\dots,f_c,sY^{a}+tf,h_1,\dots,h_n)=e(R[Y]/(sY^{a}+tf)_{(y_1,\dots,y_{n+c},Y)})$$
for sufficiently general elements  $(h_1,\dots,h_n)\in\mathfrak n\oplus\cdots\oplus\mathfrak n$
since  $(h_1,\dots,h_n)$ is a minimal reduction of  the ideal $(y_1,\dots,y_{n+c},Y)$ of $R[Y]/(sY^{a}+tf)_{(y_1,\dots,y_{n+c},Y)}$ (see for example Theorem 8.6.6 in \cite{HS}). 
Since $S_\mathfrak m\cong R[Y]/(sY^{a}+tf)_{(y_1,\dots,y_{n+c},Y)}$,
we have 
\begin{align*}
e(S_\mathfrak m)&=e(R[Y]/(sY^{a}+tf)_{(y_1,\dots,y_{n+c},Y)})\\
&=\sigma_A((f_1),\dots,(f_c),(Y^{a},f),\mathfrak n,\dots,\mathfrak n).
\end{align*}

\end{proof}

\begin{prop}\label{key proposition2}
Let $\mathbb D=(D,w)$ be an $n$-dimensional connected special datum and $J\in D$ be the maximal element of $D$ with $|J|\ge 2$.
Let $J_1,\dots,J_m$ be the elements of $D$ such that $J=J_1\cup\dots\cup J_m$ and
$J_i\sqsubset  J$ for $i=1,\dots,m$.
If $1=\mathrm{lct}(\mathfrak m_{{\mathbb D}})>\frac{\mathrm{lct}(\mathfrak m_{\mathbb D\setminus J})}{w(J_1)}$, then $$e(R_{\mathbb D})\ge\mathrm{lct}(\mathfrak m_{\mathbb D\setminus J})e(R_{\mathbb D\setminus J}).$$
\end{prop}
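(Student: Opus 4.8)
The plan is to use Lemma~\ref{sigma multiplicity lemma} to express $e(R_{\mathbb D})$ as a $\sigma$-multiplicity and then bound it below by the monotonicity and homogeneity properties of $\sigma$ collected in Lemma~\ref{lemma2 of A2}. Write $a=w(J_1)=\cdots=w(J_m)$, and by Lemma~\ref{reduce lemma} realize $R_{\mathbb D}\cong R_{\mathbb D\setminus J}[Y]/(Y^a-x_{J_1}\cdots x_{J_m})$. Presenting $R_{\mathbb D\setminus J}=\mathbb C[y_1,\dots,y_{n+c}]/(f_1,\dots,f_c)$ as a complete intersection (possible by Proposition~\ref{Watanabe invariant ring}), set $f=x_{J_1}\cdots x_{J_m}\in R_{\mathbb D\setminus J}$, lift it to the polynomial ring, and apply Lemma~\ref{sigma multiplicity lemma} with $A=\mathbb C[y_1,\dots,y_{n+c},Y]_{\mathfrak n}$ to get
$$e(R_{\mathbb D})=\sigma_A\bigl((f_1),\dots,(f_c),(Y^a,f),\mathfrak n,\dots,\mathfrak n\bigr).$$

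The next step is to produce a \emph{smaller} family of ideals inside $(f_1),\dots,(f_c),(Y^a,f),\mathfrak n,\dots,\mathfrak n$ whose $\sigma$-multiplicity is computable and equals (a multiple of) $\mathrm{lct}(\mathfrak m_{\mathbb D\setminus J})e(R_{\mathbb D\setminus J})$; by part (1) of Lemma~\ref{lemma2 of A2} (monotonicity: shrinking the ideals increases $\sigma$) this will give the desired inequality $e(R_{\mathbb D})\ge \mathrm{lct}(\mathfrak m_{\mathbb D\setminus J})e(R_{\mathbb D\setminus J})$. The key input is Lemma~\ref{log canonical threshold lemma}: writing $\ell=\mathrm{lct}(\mathfrak m_{\mathbb D\setminus J})$, there are $J'_1,\dots,J'_k\in D\setminus J$ and positive integers $s,t_1,\dots,t_k$ with $\sum t_i/s=\ell$ realizing $(1,\dots,1)$ as a point of $\ell\,\mathrm{Newt}(\mathfrak b_{\mathbb D\setminus J})$, equivalently $(x_1\cdots x_n)^s\cdot(\text{unit discrepancy from the }w(J'_i))$ is a product of the generators $x_{J'_i}^{v(J'_i)}$ — so $f^{\text{power}}$ is, up to integral closure, controlled by a power of the maximal ideal of $R_{\mathbb D\setminus J}$. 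This is the arithmetic heart: in the regime $1=\mathrm{lct}(\mathfrak m_{\mathbb D})>\mathrm{lct}(\mathfrak m_{\mathbb D\setminus J})/a$, i.e. $a>\ell$, the element $Y$ is \emph{not} integral over a minimal reduction coming from below, so unlike in Proposition~\ref{lct=lct} we only get an inequality; the factor $\ell$ (rather than $a$) appears precisely because $f^s$ lies in the $\lceil$ something $\rceil$-th power of $\mathfrak m_{\mathbb D\setminus J}$ after accounting for the weights, and $Y^a=f$ forces $Y$ to have ``order $\ell/a<1$'' measured against $\mathfrak m_{\mathbb D\setminus J}$.

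Concretely, after the reduction I expect to replace $(Y^a,f)$ by the ideal $(Y^a)+f\cdot\mathfrak m_{\mathbb D\setminus J}^{0}$ refined appropriately, use $\sigma_A(I_1^{r_1},\dots)=r_1\cdots r_n\,\sigma_A(I_1,\dots)$ from part (2) of Lemma~\ref{lemma2 of A2} to clear denominators (replacing $\mathfrak n$-slots and the $Y$-slot by suitable powers so that $s\mid$ everything), and then recognize the resulting $\sigma$ as $s^{\,?}\cdot a^{\,?}\cdot e(R_{\mathbb D\setminus J})$ times a combinatorial factor that simplifies to $\ell\,e(R_{\mathbb D\setminus J})$; Lemma~\ref{lemma2 of A2 integral closure} lets me pass to integral closures freely during this bookkeeping, which is what makes the monomial identity $(x_1\cdots x_n)^s=\prod_i (x_{J'_i}^{w(J'_i)/a})^{t_i}\cdot(\text{unit})$ usable. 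The main obstacle I anticipate is exactly this last identification: matching the $\sigma$-multiplicity of the shrunk tuple with $\mathrm{lct}(\mathfrak m_{\mathbb D\setminus J})\,e(R_{\mathbb D\setminus J})$ requires choosing the truncation powers so that sufficiently general combinations of the generators behave like a system of parameters computing $e(R_{\mathbb D\setminus J})$ on the $R_{\mathbb D\setminus J}$ factor while contributing exactly a factor $\ell$ from the $Y$-direction — the genericity from Proposition~\ref{lemma1 of A2} should handle the ``system of parameters'' part, but verifying the numerical factor is $\ell$ and not, say, $\lceil\ell\rceil$ or $a\cdot(\ell/a)$ rounded, is where the special-datum divisibility conditions (4) and (5) must be invoked carefully.
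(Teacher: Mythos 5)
Your overall strategy is exactly the one the paper uses: express $e(R_{\mathbb D})$ as $\sigma_A\bigl((f_1),\dots,(f_c),(Y^a,f),\mathfrak n,\dots,\mathfrak n\bigr)$ via Lemma \ref{sigma multiplicity lemma}, then shrink the ideal in the $Y$-slot and invoke the monotonicity, homogeneity and integral-closure invariance of $\sigma$ from Lemmas \ref{lemma2 of A2} and \ref{lemma2 of A2 integral closure}, with the Newton-polygon identity from Lemma \ref{log canonical threshold lemma} supplying the arithmetic input. However, the step you flag as ``the main obstacle I anticipate'' is precisely the content of the proof, and you have not carried it out: you never specify the shrunk ideal, and the exponents in your sketch are left as question marks. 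As written, the proposal establishes the framework but not the inequality.

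What is missing, concretely: write $\mathrm{lct}(\mathfrak m_{\mathbb D\setminus J})/a=p/q$ in lowest terms, set $d=(a,q)$, $a=a'd$, $q=q'd$. The correct chain is
$e(R_{\mathbb D})=\sigma_A(\dots,(Y^a,f),\dots)=\tfrac{1}{q'}\sigma_A(\dots,(Y^a,f)^{q'},\dots)=\tfrac{1}{q'}\sigma_A(\dots,(Y^{aq'},f^{q'}),\dots)\ge\tfrac{1}{q'}\sigma_A(\dots,(Y^{a'p},f^{q'}),\dots)$,
where the second equality is homogeneity, the third is integral-closure invariance (the integral closures of $(Y^a,f)^{q'}$ and $(Y^{aq'},f^{q'})$ coincide), and the inequality is monotonicity using $a'p<aq'$ (equivalent to $p<q$, i.e.\ to the hypothesis $\mathrm{lct}(\mathfrak m_{\mathbb D\setminus J})/a<1$). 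The last term is $\tfrac{1}{q'}\,e\bigl(R_{\mathbb D\setminus J}[Y]/(Y^{a'p}-f^{q'})\bigr)$, and computing \emph{this} multiplicity is where the monomial identity is actually used: the relation $\bigl((x_1\cdots x_n)^{q'}\bigr)^{sp}=\bigl(\prod_i(x_{J_i'}^{a'pv(J_i')})^{t_i}\bigr)^{q}$ shows that $x_J^{q'}$ is integral over the extension of a minimal reduction $I$ of the maximal ideal of $R_{\mathbb D\setminus J}$, so $I$ already generates a minimal reduction upstairs and the colength count gives $e=a'p\,e(R_{\mathbb D\setminus J})$; dividing by $q'$ yields $\tfrac{a'p}{q'}=\tfrac{ap}{q}=\mathrm{lct}(\mathfrak m_{\mathbb D\setminus J})$. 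Your worry about whether the factor comes out as $\ell$ rather than a rounded version is resolved exactly by this $\gcd$ bookkeeping; without it the argument does not close.
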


\begin{proof}
Let $a=w(J_1)$ and $f_1,\dots,f_{c},f$ be elements of the ideal $(y_1,\dots,y_{n+c})$ of the polynomial ring $\mathbb C[y_1,\dots,y_{n+c}]$ such that   
$$R_{{\mathbb D\setminus J}}\cong\mathbb C[y_1,\dots,y_{n+c}]/(f_1,\dots,f_{c})\ \ \ \mbox{and}$$
$$R_\mathbb D\cong\mathbb C[y_1,\dots,y_{n+c},Y]/(f_1,\dots,f_{c},Y^a-f).$$
Note that we can choose such  $f_1,\dots,f_{c},f$ by Lemma \ref{reduce lemma}.
Let $\mathbb D\setminus J=(D\setminus J,v)$ and $\boldsymbol{e}_i=(0,\cdots,0,\overset{i}{\check{1}},0,\cdots,0)\in\mathbb R^n$.
By Lemma \ref{log canonical threshold lemma}, there exist $J'_1,\dots,J'_k\in D\setminus J$ and $s,t_1,\dots,t_k,p,q\in\mathbb N$ such that 
$$(p,q)=1,$$
$$\frac{t_1}{s}+\cdots+\frac{t_k}{s}=\frac{\mathrm{lct}(\mathfrak m_{\mathbb D\setminus J})}{a}=\frac{p}{q}<1\ \ \mathrm{and}$$
$$\frac{at_1}{s}v(J'_1)\boldsymbol{e}_{J'_1}+\cdots+\frac{at_k}{s}v(J'_k)\boldsymbol{e}_{J'_k}=(1,\dots,1),$$
where $\boldsymbol{e}_{J'_i}=\sum_{j\in J'_i}\boldsymbol{e}_{j}$.
Let $(a,q)=d$, $a=a'd$ and $q=q'd$.
Then we have  $$sp=t_1q\cdots+t_kq\ \ \ \mathrm{and}$$
 $$({(x_1\cdots x_n)^{q'}})^{sp}=\Bigl( (x_{J'_1}^{a'pv(J'_1)})^{t_1}\cdots (x_{J'_k}^{a'pv(J'_k)})^{t_k}\Bigr)^{q}.$$
Let  $R=\mathbb C[x_{J'}^{a'pv(J')}|J'\in D\setminus J][x_J^{q'}]$ and $\mathfrak m_R=(x_{J'}^{a'pv(J')}|J'\in D\setminus J)+(x_J^{q'})\subset R$. 
Let  $S=\mathbb C[x_{J'}^{a'pv(J')}|J'\in D\setminus J]$ and $\mathfrak m_S=(x_{J'}^{a'pv(J')}|J'\in D\setminus J)\subset S$.
Note that 
$$S\cong R_{{\mathbb D\setminus J}}\  \mbox{and}$$ 
$$R\cong S[Y]/(Y^{a'p}-(x_{J_1}^{a'p}\cdots x_{J_m}^{a'p})^{q'})\cong R_{{\mathbb D\setminus J}}[Y]/(Y^{a'p}-(x_{J_1}\cdots x_{J_m})^{q'}).$$
Let $I$ be a minimal reduction of the maximal ideal  of the local ring $S_{\mathfrak m_S}$.
Since $({(x_1\cdots x_n)^{q'}})^{sp}=\Bigl( (x_{J'_1}^{a'pv(J'_1)})^{t_1}\cdots (x_{J'_k}^{a'pv(J'_k)})^{t_k}\Bigr)^{q}$, the integral closure of $IR_{\mathfrak m_R}$ contains $x_J^{q'}=(x_1\cdots x_n)^{q'}$.
This implies that  the integral closure of $IR_{\mathfrak m_R}$ is the maximal ideal of  $R_{\mathfrak m_R}$.
Hence  $IR_{\mathfrak m_R}$ is a minimal reduction of the maximal ideal of  $R_{\mathfrak m_R}$.
Thus \begin{align*}
e(R_{\mathfrak m_R}) & = \ell(R_{\mathfrak m_R}/IR_{\mathfrak m_R}) \\
 &=\ell\bigl{(}(S_{\mathfrak m_S}/I)[Y]/(Y^{a'p}-(x_{J_1}^{a'p}\cdots x_{J_m}^{a'p})^{q'})\bigr{)}\\
& =a'p\ell(S_{\mathfrak m_S}/I)\\
&=a'pe(R_{\mathbb D\setminus J})
\end{align*}
Let $A=\mathbb C[y_1,\dots,y_{n+c},Y]_{(y_1,\dots,y_{n+c},Y)}$ and $\mathfrak n$ be the maximal ideal of $A$.
By Lemma \ref{lemma2 of A2}, Lemma \ref{lemma2 of A2 integral closure} and Lemma \ref{sigma multiplicity lemma},
\begin{align*}
e(R_\mathbb D) & =\sigma_A((f_1),\dots,(f_c),(Y^a,f),\mathfrak n,\dots,\mathfrak n)\\
& =\frac{1}{q'}\sigma_A((f_1),\dots,(f_c),(Y^{a},f)^{q'},\mathfrak n,\dots,\mathfrak n)\\
& =\frac{1}{q'}\sigma_A((f_1),\dots,(f_c),(Y^{aq'},f^{q'}),\mathfrak n,\dots,\mathfrak n)\\
& \ge\frac{1}{q'}\sigma_A((f_1),\dots,(f_c),(Y^{a'p},f^{q'}),\mathfrak n,\dots,\mathfrak n)\\
&= \frac{1}{q'}e\bigl(\mathbb C[y_1,\dots,y_{n+c},Y]/(f_1,\dots,f_{c},Y^{a'p}-f^{q'})_{(y_1,\dots,y_{n+c},Y)}\bigr{)}\\
& = \frac{1}{q'}e(R_{\mathfrak m_R}) \\
&= \frac{a'p}{q'}e(R_{\mathbb D\setminus J}) \\
&=\mathrm{lct}(\mathfrak m_{\mathbb D\setminus J})e(R_{\mathbb D\setminus J}).
\end{align*}
The third equality holds since the integral closure of $(Y^{a},f)^{q'}$ is equal to the integral closure of $(Y^{aq'},f^{q'})$.
\end{proof}


\section{Upper bound of the multiplicity}

In this section, we give an upper bound of the multiplicity of abelian quotient complete intersection singularities.

\begin{defn}
Let $\mathbb D=(D,w)$ be a special datum and $J\in D$ with $|J|\ge2$.
We define an invariant $\delta(J)$ by 
$$\delta(J):=|\{J'\in D|J'\sqsubset  J\}|.$$
We define an invariant $m(D)$ by 
$$m(D):=\prod_{J\in D, |J|\ge2}\delta(J).$$
\end{defn}

\begin{thm}$\mathrm(${\rm See proof of Proposition 3.1  in \cite{W}}$\mathrm)$\label{watanabe multiplicity}
Let $\mathbb D=(D,w)$ be an $n$-dimensional special datum.
Then $$e(R_{\mathbb D})\le m(D)\le 2^{n-1}.$$
\end{thm}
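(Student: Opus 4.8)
The plan is to prove the two inequalities $e(R_{\mathbb D})\le m(D)$ and $m(D)\le 2^{n-1}$ separately, both by induction on the dimension $n$, and in each case by reducing a connected special datum $\mathbb D$ to the smaller special datum $\mathbb D\setminus J$, where $J$ is a maximal element with $|J|\ge 2$. By Proposition \ref{non-connected}, both $e(R_{\mathbb D})$ and $m(D)$ are multiplicative over connected components (for $e$ this is (3), and $m(D)$ factors over the connected components since each $J$ with $|J|\ge 2$ lies in exactly one component), so it suffices to treat the connected case; if $D$ has a unique maximal element $J$ and $|J|=1$ then $n=1$, $R_{\mathbb D}=\mathbb C[x_1]$, $e(R_{\mathbb D})=1=m(D)=2^{0}$, which is the base case.

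For the first inequality, let $\mathbb D$ be connected of dimension $n\ge 2$ with maximal element $J$, $|J|\ge 2$, and write $J=J_1\cup\dots\cup J_m$ with $J_i\sqsubset J$. Set $a=w(J_1)=\dots=w(J_m)$. By Proposition \ref{lct=lct} we have $e(R_{\mathbb D})\le a\,e(R_{\mathbb D\setminus J})$, but this alone is too weak since $a$ can be large. Instead I would bound $e(R_{\mathbb D})$ by $m\cdot e(R_{\mathbb D\setminus J})=\delta(J)\,e(R_{\mathbb D\setminus J})$: the point is that $x_J=x_{J_1}\cdots x_{J_m}$, and in the presentation $R_{\mathbb D}\cong R_{\mathbb D\setminus J}[Y]/(Y^{a}-x_{J_1}\cdots x_{J_m})$ of Lemma \ref{reduce lemma}, the element $Y$ integrally depends on the ideal generated by the $m$ elements $x_{J_1},\dots,x_{J_m}$ of $R_{\mathbb D\setminus J}$ (since $Y^a=x_{J_1}\cdots x_{J_m}$ and $m\le a$, as $a=w(J_i)>w(J)=1$ forces... ) — more carefully, one takes a minimal reduction $I$ of $\mathfrak m_{\mathbb D\setminus J}$ together with the $m$ generators $x_{J_1},\dots,x_{J_m}$ and shows the resulting ideal of $R_{\mathbb D}$ is a reduction of $\mathfrak m_{\mathbb D}$ with colength $m\cdot e(R_{\mathbb D\setminus J})$, using that $\ell\bigl((R_{\mathbb D\setminus J}/I)[Y]/(Y^a-x_{J_1}\cdots x_{J_m})\bigr)$ is controlled by $m$. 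Then, since a connected component $\mathbb D_{J_i}$ of $\mathbb D\setminus J$ has dimension $|J_i|<n$, the inductive hypothesis gives $e(R_{\mathbb D\setminus J})=\prod_i e(R_{\mathbb D_{J_i}})\le\prod_i m(D_{J_i})=m(D\setminus J)$, and since $m(D)=\delta(J)\cdot m(D\setminus J)=m\cdot m(D\setminus J)$ we conclude $e(R_{\mathbb D})\le m\cdot m(D\setminus J)=m(D)$.

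For the second inequality $m(D)\le 2^{n-1}$, again reduce to the connected case and induct. With notation as above, $m(D)=\delta(J)\,m(D\setminus J)=m\cdot\prod_i m(D_{J_i})$. Each $\mathbb D_{J_i}$ is an $n_i$-dimensional special datum with $n_i=|J_i|$ and $n_1+\dots+n_m=n$, so by induction $m(D_{J_i})\le 2^{n_i-1}$, giving $m(D)\le m\cdot 2^{(n_1-1)+\dots+(n_m-1)}=m\cdot 2^{n-m}$. It then remains to check the elementary inequality $m\le 2^{m-1}$ for all integers $m\ge 1$ (equivalently $m\cdot 2^{n-m}\le 2^{n-1}$), which is immediate by induction on $m$. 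The base cases $n=1$ (with $m(D)=1=2^{0}$, taking the empty product) and $n=2$ (with $D=\{\{1,2\},\{1\},\{2\}\}$, $m(D)=\delta(\{1,2\})=2=2^{1}$) are direct.

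The main obstacle is the first inequality, specifically establishing $e(R_{\mathbb D})\le\delta(J)\,e(R_{\mathbb D\setminus J})$ rather than the weaker $a\,e(R_{\mathbb D\setminus J})$ from Proposition \ref{lct=lct}: one must produce an explicit $\mathfrak m_{\mathbb D}$-primary reduction whose colength is exactly $m\cdot e(R_{\mathbb D\setminus J})$, using the factorization $x_J=x_{J_1}\cdots x_{J_m}$ and a length computation on the Artinian ring $(R_{\mathbb D\setminus J}/I)[Y]/(Y^a-x_{J_1}\cdots x_{J_m})$ together with the relations among the $x_{J_i}$ inside $R_{\mathbb D\setminus J}/I$. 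The second inequality and the reductions via Proposition \ref{non-connected} are routine once this is in hand.
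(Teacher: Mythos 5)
The paper does not actually prove this statement: it is imported verbatim from the proof of Proposition 3.1 in \cite{W}, so the only internal point of comparison is the identity $\sum_{J\in D,\,|J|\ge2}(\delta(J)-1)=n-1$ that the paper later quotes from Watanabe's proof in Corollary \ref{emb 2n-1}. Your reduction to the connected case, your base cases, and your proof of the second inequality $m(D)\le 2^{n-1}$ (via $m(D)=\delta(J)\,m(D\setminus J)$ and $\delta(J)\le 2^{\delta(J)-1}$, which is exactly that identity in inductive form) are all correct.

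The genuine gap is the first inequality, and you have correctly located it yourself: everything hinges on $e(R_{\mathbb D})\le \delta(J)\,e(R_{\mathbb D\setminus J})$, which you do not prove, and the sketch you offer does not work as stated. First, the length $\ell\bigl((R_{\mathbb D\setminus J}/I)[Y]/(Y^{a}-x_{J_1}\cdots x_{J_m})\bigr)$ is \emph{exactly} $a\cdot\ell(R_{\mathbb D\setminus J}/I)=a\,e(R_{\mathbb D\setminus J})$, because the quotient is free of rank $a$ over the Artinian ring $R_{\mathbb D\setminus J}/I$; it is not ``controlled by $m$'' in any way, so this computation only reproduces the bound $a\,e(R_{\mathbb D\setminus J})$ of Proposition \ref{lct=lct}, which you rightly reject as too weak. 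Second, adjoining the $m$ elements $x_{J_1},\dots,x_{J_m}$ to a minimal reduction $I$ produces an ideal with $n+m$ generators in the $n$-dimensional ring $(R_{\mathbb D})_{\mathfrak m_{\mathbb D}}$; such an ideal is not a parameter ideal, its colength is not a multiplicity, and it yields no upper bound on $e(\mathfrak m_{\mathbb D})$. Third, your parenthetical attempt to argue $m\le a$ trails off and is false in general ($a=w(J_i)$ can be $2$ while $m$ is large). What is actually needed is $e(R_{\mathbb D})\le \min(a,m)\,e(R_{\mathbb D\setminus J})$, and proving it requires a real argument --- e.g.\ exhibiting a parameter ideal of $(R_{\mathbb D})_{\mathfrak m_{\mathbb D}}$ with $n$ generators (when $m\le a$, one containing a general linear combination of $Y$ and $x_{J_1},\dots,x_{J_m}$) and showing it is a reduction of $\mathfrak m_{\mathbb D}$ with colength $m\,e(R_{\mathbb D\setminus J})$, or an analysis of the initial form of $Y^{a}-x_{J_1}\cdots x_{J_m}$, whose order is $\min(a,m)$ since the $x_{J_i}$ are among the ring generators of $R_{\mathbb D\setminus J}$. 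That step is precisely the content of Watanabe's Proposition 3.1, so as written your argument assumes the one nontrivial point of the theorem.
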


\begin{cor}\label{emb 2n-1}
Let $\mathbb D=(D,w)$ be an $n$-dimensional special datum.
Then $$e(R_{\mathbb D})\le 2^{n-1}$$
and the equality holds if and only if $\mathrm{emb}(R_{\mathbb D})=2n-1$.
\end{cor}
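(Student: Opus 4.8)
The plan is to extract both the inequality and the equality characterization from Theorem \ref{watanabe multiplicity}, which already gives $e(R_{\mathbb D})\le m(D)\le 2^{n-1}$. So the inequality $e(R_{\mathbb D})\le 2^{n-1}$ is immediate, and the real content is the ``if and only if'' statement relating equality in the multiplicity bound to the embedding dimension attaining $2n-1$. By Proposition \ref{non-connected}, both $e(R_{\mathbb D})$ and $\mathrm{emb}(R_{\mathbb D})$ decompose over the connected components (the former multiplicatively, the latter additively), and since $\sum_i \dim R_{\mathbb D_{J_i}} = n$, one checks that $e(R_{\mathbb D}) = 2^{n-1}$ holds iff each component $\mathbb D_{J_i}$ satisfies $e(R_{\mathbb D_{J_i}}) = 2^{n_i - 1}$ with $n_i = \dim R_{\mathbb D_{J_i}}$, while $\mathrm{emb}(R_{\mathbb D}) = 2n-1$ forces all but one component to be a single vertex (contributing $n_i = 1$, $\mathrm{emb} = 1$) and the remaining one to have $\mathrm{emb} = 2n_i - 1$. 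So after this reduction I may assume $\mathbb D$ is connected.

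For the connected case, first I would observe that when $n=1$ we have $R_{\mathbb D} = \mathbb C[x_1]$, $e = 1 = 2^0$, $\mathrm{emb} = 1 = 2\cdot 1 - 1$, so the statement holds trivially. For $n \ge 2$, let $J$ be the unique maximal element, necessarily with $|J| \ge 2$ (else $\mathbb D$ would not be connected of dimension $\ge 2$), and let $J_1,\dots,J_m$ be the elements with $J_i \sqsubset J$. By Proposition \ref{lct=lct}, $e(R_{\mathbb D}) \le w(J_1) e(R_{\mathbb D\setminus J})$; combined with the observation from the proof of Theorem \ref{watanabe multiplicity} that in fact $e(R_{\mathbb D}) \le \delta(J)\, e(R_{\mathbb D\setminus J}) = m\cdot e(R_{\mathbb D\setminus J})$ and $m(D) = m\cdot m(D\setminus J)$, I would argue by induction on $n$: equality $e(R_{\mathbb D}) = 2^{n-1}$ in Theorem \ref{watanabe multiplicity} forces $e(R_{\mathbb D}) = m(D)$, hence $m = \delta(J) = 2$ (since each $\delta(J') = 2$ is needed to make $m(D) = 2^{n-1}$) and $e(R_{\mathbb D\setminus J}) = m(D\setminus J) = 2^{(n-1)-1}$ on each connected component of $\mathbb D\setminus J$. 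By the inductive hypothesis applied to the components of $\mathbb D\setminus J$, together with Proposition \ref{non-connected}, this is equivalent to $\mathrm{emb}(R_{\mathbb D\setminus J}) = 2(n-1) - 1 = 2n - 3$; and since $\mathrm{emb}(R_{\mathbb D}) = \mathrm{emb}(R_{\mathbb D\setminus J}) + 1$ by Proposition \ref{embedding dimension}, this says exactly $\mathrm{emb}(R_{\mathbb D}) = 2n - 2$. Here I need to be careful: the target is $2n-1$, not $2n-2$, so the reduction step must also account for the vertex $J$ itself — I expect the correct bookkeeping is that each maximal element with $|J| \ge 2$ and $\delta(J) = 2$ contributes $1$ to $\mathrm{emb}$ beyond its $\delta(J) - 1 = 1$ ``extra'' slots, and a clean induction tracks $\mathrm{emb}(R_{\mathbb D}) - n = |\{J \in D : |J| \ge 2\}|$ against $\sum_{|J|\ge 2}(\delta(J) - 1)$, with equality $e = m(D) = 2^{n-1}$ iff every $\delta(J) = 2$ iff $|\{J : |J|\ge 2\}| = n - 1$ iff $\mathrm{emb}(R_{\mathbb D}) = 2n - 1$.

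The main obstacle I anticipate is making the combinatorial identity $|\{J \in D : |J| \ge 2\}| = n - 1 \iff$ every $\delta(J) = 2$ fully rigorous — this amounts to a counting argument on the forest structure of $D$ (a tree, in the connected case) where the leaves are the $n$ singletons and each internal node $J$ has $\delta(J)$ children, so that $n = \sum (\delta(J) - 1) + 1$ when summing over internal nodes, giving $|\{\text{internal nodes}\}| = n - 1$ precisely when every $\delta(J) = 2$. I would state this cleanly as a lemma-free inline computation, then feed it through Theorem \ref{watanabe multiplicity} and Proposition \ref{embedding dimension}: $e(R_{\mathbb D}) = 2^{n-1} \iff e(R_{\mathbb D}) = m(D)$ and $m(D) = 2^{n-1} \iff$ (by the proof of Theorem \ref{watanabe multiplicity}) every $\delta(J) = 2$, which by the counting identity is $\iff |\{J : |J| \ge 2\}| = n-1 \iff \mathrm{emb}(R_{\mathbb D}) = n + (n-1) = 2n - 1$. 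The one genuine subtlety requiring the results of Section 3 rather than pure combinatorics is the implication $m(D) = 2^{n-1} \Rightarrow e(R_{\mathbb D}) = m(D)$ — i.e. that the inequality $e(R_{\mathbb D}) \le m(D)$ is an equality whenever $m(D)$ is maximal — which I would handle via repeated application of Proposition \ref{lct=lct} in the case $\mathrm{lct}(\mathfrak m_{\mathbb D}) = \mathrm{lct}(\mathfrak m_{\mathbb D\setminus J})/w(J_1)$, checking that when all $\delta(J) = 2$ the log canonical thresholds are forced into the regime where Proposition \ref{lct=lct} gives equality $e(R_{\mathbb D}) = w(J_1) e(R_{\mathbb D\setminus J})$ at each step.
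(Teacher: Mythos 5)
Your derivation of the inequality and of the direction $e(R_{\mathbb D})=2^{n-1}\Rightarrow \mathrm{emb}(R_{\mathbb D})=2n-1$ matches the paper: both use $e(R_{\mathbb D})\le m(D)\le 2^{n-1}$ together with the identity $\sum_{|J|\ge 2}(\delta(J)-1)=n-1$ to force $\delta(J)=2$ for all non-singletons, and then Proposition \ref{embedding dimension}. The gap is in the converse direction. You propose to show that $\mathrm{emb}(R_{\mathbb D})=2n-1$ (equivalently, every $\delta(J)=2$) implies $e(R_{\mathbb D})=m(D)=2^{n-1}$ by ``checking that when all $\delta(J)=2$ the log canonical thresholds are forced into the regime where Proposition \ref{lct=lct} gives equality $e(R_{\mathbb D})=w(J_1)e(R_{\mathbb D\setminus J})$.'' That check fails: take $n=2$, $D=\{\{1,2\},\{1\},\{2\}\}$, $w(\{1\})=w(\{2\})=3$, so $R_{\mathbb D}=\mathbb C[x_1^3,x_2^3,x_1x_2]$. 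Here $\delta(\{1,2\})=2$ and $\mathrm{emb}=3=2n-1$, but $\mathrm{lct}(\mathfrak m_{\mathbb D})=1>\tfrac{2}{3}=\mathrm{lct}(\mathfrak m_{\mathbb D\setminus J})/w(J_1)$, so the equality case of Proposition \ref{lct=lct} does not apply (and could not give the right answer anyway, since $w(J_1)e(R_{\mathbb D\setminus J})=3\ne 2=e(R_{\mathbb D})$). The hypothesis $\delta(J)=2$ constrains the number of children of $J$, not the weight $w(J_1)$, and it is the weight that governs which branch of Proposition \ref{log canonical threshold max 1 lct} you land in.

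The paper avoids this entirely: since $R_{\mathbb D}$ is a complete intersection of dimension $n$, the hypothesis $\mathrm{emb}(R_{\mathbb D})=2n-1$ gives the general lower bound $e(R_{\mathbb D})\ge 2^{\mathrm{emb}-\dim}=2^{n-1}$ (Example 12.4.9 in \cite{F}), which combined with Theorem \ref{watanabe multiplicity} forces equality in one line. If you insist on an inductive argument along your lines, you would have to treat the case $1=\mathrm{lct}(\mathfrak m_{\mathbb D})>\mathrm{lct}(\mathfrak m_{\mathbb D\setminus J})/w(J_1)$ separately, e.g.\ via Proposition \ref{key proposition2} together with the bound $\mathrm{lct}(\mathfrak m_{\mathbb D\setminus J})\ge m=\delta(J)=2$, to get $e(R_{\mathbb D})\ge 2\,e(R_{\mathbb D\setminus J})$; but as written the proposal does not close this case.
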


\begin{proof}
The inequality follows from Theorem \ref{watanabe multiplicity}.

If $\mathrm{emb}(R_{\mathbb D})=2n-1$, then $e(R_{\mathbb D})\ge2^{n-1}$ (See Example 12.4.9 in \cite{F}).
By Theorem \ref{watanabe multiplicity}, we have $e(R_{\mathbb D})=2^{n-1}$.

We assume that $e(R_{\mathbb D})=2^{n-1}$.
By Theorem \ref{watanabe multiplicity}, we have $m(D)=2^{n-1}$.
Note that $$\sum_{J\in D, |J|\ge2}(\delta(J)-1)=n-1$$
(See proof of Proposition 3.1  in \cite{W}).
Therefore $$2^{n-1}=m(D)=\prod_{J\in D, |J|\ge2}\delta(J)\le 2^{\sum_{J\in D, |J|\ge2}(\delta(J)-1)}=2^{n-1}.$$
This implies that  $|\{J\in D||J|\ge2\}|=n-1$ and $\delta(J)=2$ for $J\in D$ with $|J|\ge2$.
By Proposition \ref{embedding dimension}, we have $\mathrm{emb}(R_{\mathbb D})=2n-1$.
\end{proof}

In order to prove Theorem \ref{Main theorem}, we need the following inequality.
\begin{lem}\label{lemma inequality}
If $a\in\mathbb N$, $b\in \mathbb R$ and  $2\le a\le b$,  then $a\le 2^{\lceil b\rceil-\lceil\frac{b}{a} \rceil}$ and the equality holds if and only if  $a=2$ and $\lceil b\rceil-\lceil\frac{b}{a} \rceil=1$.
\end{lem}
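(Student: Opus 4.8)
The plan is to prove the equivalent statement $\lceil b\rceil - \lceil b/a\rceil \ge \log_2 a$ by reducing to an elementary estimate of the difference $\lceil b\rceil - \lceil b/a\rceil$ from below. First I would observe that the worst case for the left-hand side is obtained when $b$ is as small as possible relative to $a$; since we always have $b\ge a$ and the function $b\mapsto \lceil b\rceil-\lceil b/a\rceil$ is nondecreasing in $b$ (each ceiling is monotone, and a standard computation shows the jumps of $\lceil b\rceil$ dominate those of $\lceil b/a\rceil$), it suffices to treat $b=a$, i.e. to show $a\le 2^{a-\lceil a/a\rceil}=2^{a-1}$ for integers $a\ge 2$. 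That last inequality is immediate by induction on $a$ (base $a=2$ gives $2=2$, and doubling the right side at least doubles $2^{a-1}$ while the left side increases by $1$). The monotonicity-in-$b$ claim I would make precise via the identity $\lceil b\rceil - \lceil b/a\rceil = \#\{\,j\in\mathbb Z : b/a < j \le b\,\}$ together with the observation that among any $a$ consecutive integers in $(b/a, b]$ at most one can fail to lie in that range's "tail," but cleaner is: write $b = qa + r$ with $q=\lfloor b/a\rfloor\ge 1$ and $0\le r < a$; then $\lceil b/a\rceil = q+\lceil r/a\rceil$ and a short case split on whether $r=0$ reduces everything to the integer case plus the bound $\lceil b\rceil \ge qa + \lceil r\rceil \ge qa + \lceil r/a\rceil + (\text{something} \ge a - 1)\cdot q$-type estimate.

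Concretely, the cleanest route is: set $q=\lfloor b/a\rfloor$, so $q\ge 1$ since $b\ge a$. Then $\lceil b/a\rceil \le q+1$ and $\lceil b\rceil \ge \lceil qa \rceil = qa$ when $b > qa$, while $\lceil b\rceil = qa$ when $b=qa$; in all cases $\lceil b\rceil \ge qa$ and $\lceil b\rceil > qa$ unless $b=qa$. Hence
$$\lceil b\rceil - \lceil b/a\rceil \ge qa - (q+1) = q(a-1) - 1 \ge (a-1)-1 = a-2$$
when $q\ge 1$, which is too weak; so I would instead sharpen by noting $\lceil b/a\rceil = q$ exactly when $b/a\le q$, i.e. $b=qa$, and then $\lceil b\rceil - \lceil b/a\rceil = qa - q = q(a-1)\ge a-1\ge \log_2 a$ by the integer case. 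When $b>qa$ we get $\lceil b/a\rceil = q+1$ but also $\lceil b\rceil \ge qa+1$, so the difference is $\ge qa+1-(q+1) = q(a-1)\ge a-1\ge \log_2 a$ again. This uniformly gives $\lceil b\rceil - \lceil b/a\rceil \ge a-1 \ge \log_2 a$, hence $a\le 2^{\lceil b\rceil - \lceil b/a\rceil}$.

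For the equality case: equality forces $a-1 = \log_2 a$, hence $a=2$ (the only solution among integers $\ge 2$, since $a-1 > \log_2 a$ for $a\ge 3$ by the induction above), and it forces each inequality in the chain to be tight, in particular $q(a-1) = a-1$ so $q=1$, i.e. $\lfloor b/a\rfloor = 1$, together with $\lceil b\rceil - \lceil b/a\rceil = a-1 = 1$. Conversely if $a=2$ and $\lceil b\rceil - \lceil b/2\rceil = 1$ then $2 = 2^1$ and equality holds. The main obstacle is purely bookkeeping: making the case analysis on $b=qa$ versus $b>qa$ airtight so that the bound $\lceil b\rceil - \lceil b/a\rceil \ge q(a-1)$ holds with no off-by-one error, and then checking that the derived strict inequality $a-1 > \log_2 a$ for $a\ge 3$ is genuinely elementary (induction: if $a-1>\log_2 a$ then $a > \log_2 a + 1 \ge \log_2 a + \log_2 2 \ge \log_2(a+1)$ once $a\ge 2$, i.e. $a > \log_2(a+1)$, which rearranges to $2^a > a+1$, clear for $a\ge 2$).
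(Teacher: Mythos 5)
Your final argument is correct and is essentially the paper's own proof: the paper's one-line chain $a-1\le\lfloor b/a\rfloor(a-1)\le\lceil b\rceil-\lceil b/a\rceil$ together with $a\le 2^{a-1}$ is exactly what your case split on $b=qa$ versus $b>qa$ (with $q=\lfloor b/a\rfloor$) establishes, and your equality analysis matches as well. The exploratory detours in your first two paragraphs (the monotonicity-in-$b$ reduction and the discarded ``$a-2$'' bound) are unnecessary and could be cut, but they do not affect the correctness of the argument you settle on.
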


\begin{proof}
Since $a-1\le\lfloor\frac{b}{a} \rfloor(a-1)\le  \lceil b\rceil-\lceil\frac{b}{a} \rceil$,
we have $a\le2^{a-1}\le 2^{\lceil b\rceil-\lceil\frac{b}{a} \rceil}$.
By the above inequalities, $a=2^{\lceil b\rceil-\lceil\frac{b}{a} \rceil}$ if and only if  $a=2$ and $\lceil b\rceil-\lceil\frac{b}{a} \rceil=1$.
\end{proof}

\begin{thm}\label{Main theorem}
Let $\mathbb D=(D,w)$ be an $n$-dimensional  special datum.
Then $$e(R_{\mathbb D})\le 2^{n-\lceil\mathrm{lct}(\mathfrak m_{{\mathbb D}})\rceil}$$
and the equality holds if and only if $\mathrm{emb}(R_{\mathbb D})=2n-\lceil\mathrm{lct}(\mathfrak m_{{\mathbb D}})\rceil$.
\end{thm}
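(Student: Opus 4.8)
The plan is to induct on $\dim R_{\mathbb D} = n$, reducing to connected data by the multiplicativity/additivity results of Proposition~\ref{non-connected} and Proposition~\ref{log canonical threshold}. First I would dispose of the non-connected case: if $J_1,\dots,J_m$ are the maximal elements of $D$ with $m\ge 2$, then $e(R_{\mathbb D})=\prod_i e(R_{\mathbb D_{J_i}})$, $\mathrm{lct}(\mathfrak m_{\mathbb D})=\sum_i \mathrm{lct}(\mathfrak m_{\mathbb D_{J_i}})$, and $\mathrm{emb}(R_{\mathbb D})=\sum_i \mathrm{emb}(R_{\mathbb D_{J_i}})$, while $n=\sum_i \dim R_{\mathbb D_{J_i}}$. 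Applying the inductive hypothesis to each connected component $\mathbb D_{J_i}$ gives $e(R_{\mathbb D})\le \prod_i 2^{\dim R_{\mathbb D_{J_i}} - \lceil \mathrm{lct}(\mathfrak m_{\mathbb D_{J_i}})\rceil} \le 2^{n-\lceil \mathrm{lct}(\mathfrak m_{\mathbb D})\rceil}$, where the last step uses $\sum_i \lceil \mathrm{lct}(\mathfrak m_{\mathbb D_{J_i}})\rceil \ge \lceil \sum_i \mathrm{lct}(\mathfrak m_{\mathbb D_{J_i}})\rceil = \lceil \mathrm{lct}(\mathfrak m_{\mathbb D})\rceil$; equality forces $\lceil\mathrm{lct}(\mathfrak m_{\mathbb D_{J_i}})\rceil$ to sum to $\lceil\mathrm{lct}(\mathfrak m_{\mathbb D})\rceil$ and forces equality in every component, which by induction is equivalent to $\mathrm{emb}(R_{\mathbb D_{J_i}})=2\dim R_{\mathbb D_{J_i}}-\lceil\mathrm{lct}(\mathfrak m_{\mathbb D_{J_i}})\rceil$, and summing gives $\mathrm{emb}(R_{\mathbb D})=2n-\lceil\mathrm{lct}(\mathfrak m_{\mathbb D})\rceil$. (The base cases $n=1,2$ are handled exactly as in the proof of Proposition~\ref{embedding dimension log canonical threshold1}.)

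For the connected case, let $J$ be the unique maximal element (with $|J|\ge 2$, else $n=1$), let $J_1,\dots,J_m$ be the elements with $J_i\sqsubset J$, set $a=w(J_1)=\dots=w(J_m)\ge 2$, and write $b=\mathrm{lct}(\mathfrak m_{\mathbb D\setminus J})$. By Proposition~\ref{log canonical threshold max 1 lct}, $\mathrm{lct}(\mathfrak m_{\mathbb D})=\max\{1,\, b/a\}$. I would split into two cases according to which term achieves the maximum. If $\mathrm{lct}(\mathfrak m_{\mathbb D})=b/a$ (so $b\ge a$), then Proposition~\ref{lct=lct} gives $e(R_{\mathbb D})=a\,e(R_{\mathbb D\setminus J})$; applying the inductive hypothesis to $\mathbb D\setminus J$ (whose connected components all have dimension $<n$, via Proposition~\ref{non-connected}) yields $e(R_{\mathbb D\setminus J})\le 2^{(n-1)-\lceil b\rceil}$, and then Lemma~\ref{lemma inequality} gives $a\le 2^{\lceil b\rceil - \lceil b/a\rceil}$, so $e(R_{\mathbb D})\le 2^{n-1-\lceil b/a\rceil}\cdot 2^{\lceil b\rceil - \lceil b\rceil}$—I need to be slightly careful here and instead combine as $e(R_{\mathbb D})\le a\cdot 2^{n-1-\lceil b\rceil}\le 2^{\lceil b\rceil-\lceil b/a\rceil}\cdot 2^{n-1-\lceil b\rceil}=2^{n-1-\lceil b/a\rceil}$... which is off by one, so the correct bookkeeping is $e(R_{\mathbb D})\le a\cdot 2^{n-1-\lceil b\rceil} \le 2^{n-\lceil b/a\rceil}$ using $a\le 2^{\lceil b\rceil-\lceil b/a\rceil+1}$? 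No: $a\le 2^{\lceil b\rceil - \lceil b/a\rceil}$ exactly, giving $e(R_{\mathbb D})\le 2^{n-1-\lceil b/a\rceil}=2^{n-1-\lceil\mathrm{lct}(\mathfrak m_{\mathbb D})\rceil}<2^{n-\lceil\mathrm{lct}(\mathfrak m_{\mathbb D})\rceil}$—so in this subcase the inequality is strict unless the inductive bound for $\mathbb D\setminus J$ is not tight; the right move is to use $\mathrm{emb}(R_{\mathbb D})=\mathrm{emb}(R_{\mathbb D\setminus J})+1$ together with Proposition~\ref{embedding dimension log canonical threshold1} and the equality case of the inductive hypothesis for $\mathbb D\setminus J$, invoking Lemma~\ref{embedding dimension log canonical threshold2} to see that equality propagates exactly when $a=2$ and $\lceil b\rceil - \lceil b/a\rceil = 1$, which is precisely the equality case of Lemma~\ref{lemma inequality}.

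If instead $\mathrm{lct}(\mathfrak m_{\mathbb D})=1>b/a$, then Proposition~\ref{key proposition2} gives the lower bound $e(R_{\mathbb D})\ge b\,e(R_{\mathbb D\setminus J})$ but I need an upper bound; here I would use Proposition~\ref{lct=lct}'s general inequality $e(R_{\mathbb D})\le a\,e(R_{\mathbb D\setminus J})$ together with the inductive bound on $e(R_{\mathbb D\setminus J})$ and a more careful analysis via $m(D)$ and Theorem~\ref{watanabe multiplicity}, or better, argue that when $\mathrm{lct}(\mathfrak m_{\mathbb D})=1$ the claim $e(R_{\mathbb D})\le 2^{n-1}$ with equality iff $\mathrm{emb}=2n-1$ is exactly Corollary~\ref{emb 2n-1}. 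So this subcase is immediate from Corollary~\ref{emb 2n-1} once we observe $\lceil\mathrm{lct}(\mathfrak m_{\mathbb D})\rceil=1$. The main obstacle is the bookkeeping in the first subcase: tracking exactly when the two inequalities (the inductive bound on $R_{\mathbb D\setminus J}$ and Lemma~\ref{lemma inequality}) are simultaneously tight, and matching this with the embedding-dimension characterization via Proposition~\ref{embedding dimension log canonical threshold1} and Lemma~\ref{embedding dimension log canonical threshold2}; those two lemmas are evidently engineered precisely for this matching, so the argument should go through, but the ceiling-function arithmetic needs to be done with care.
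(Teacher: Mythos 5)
Your overall strategy coincides with the paper's: induct on dimension, dispose of the non-connected case via the multiplicativity of $e$ and additivity of $\mathrm{lct}$ (Propositions \ref{non-connected} and \ref{log canonical threshold}), and in the connected case split according to whether $\mathrm{lct}(\mathfrak m_{\mathbb D})$ equals $\mathrm{lct}(\mathfrak m_{\mathbb D\setminus J})/a$ or $1$, using Proposition \ref{lct=lct}, Lemma \ref{lemma inequality}, Proposition \ref{embedding dimension log canonical threshold1}, Lemma \ref{embedding dimension log canonical threshold2} and Corollary \ref{emb 2n-1}. The non-connected case and the subcase $\mathrm{lct}(\mathfrak m_{\mathbb D})=1>\mathrm{lct}(\mathfrak m_{\mathbb D\setminus J})/a$ are handled exactly as in the paper, and your description of how the equality case should be matched against the embedding dimension is the right one.

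The genuine problem is in the subcase $\mathrm{lct}(\mathfrak m_{\mathbb D})=b/a$ with $b=\mathrm{lct}(\mathfrak m_{\mathbb D\setminus J})$: you write $e(R_{\mathbb D\setminus J})\le 2^{(n-1)-\lceil b\rceil}$, i.e.\ you treat $\mathbb D\setminus J$ as $(n-1)$-dimensional. It is not: $D\setminus J$ still contains all the singletons $\{1\},\dots,\{n\}$, so $\mathbb D\setminus J$ is an $n$-dimensional special datum and $R_{\mathbb D\setminus J}$ has Krull dimension $n$ (deleting $J$ drops the embedding dimension by one, not the dimension; Lemma \ref{reduce lemma} exhibits $R_{\mathbb D}$ as a hypersurface over $R_{\mathbb D\setminus J}$, both rings of dimension $n$). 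Applying the inductive hypothesis to the components $\mathbb D_{J_i}$, each of dimension $n_i<n$ with $n_1+\cdots+n_m=n$, gives $e(R_{\mathbb D\setminus J})\le 2^{\,n-\lceil\mathrm{lct}(\mathfrak m_{\mathbb D_{J_1}})\rceil-\cdots-\lceil\mathrm{lct}(\mathfrak m_{\mathbb D_{J_m}})\rceil}\le 2^{\,n-\lceil b\rceil}$, and then $e(R_{\mathbb D})=a\,e(R_{\mathbb D\setminus J})\le a\,2^{\,n-\lceil b\rceil}\le 2^{\,n-\lceil b/a\rceil}$ by Lemma \ref{lemma inequality}; there is no ``off by one''. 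In particular your intermediate conclusion that the inequality must be strict in this subcase is false: for $R_{\mathbb D}=\mathbb C[x_1^2,x_2^2,x_1x_2]$ one has $a=2$, $b=2$, $\mathrm{lct}(\mathfrak m_{\mathbb D})=b/a=1$ and $e(R_{\mathbb D})=2=2^{2-1}$. Once the dimension count is corrected, the chain of three inequalities above is exactly the paper's, and the equality analysis you sketch (simultaneous tightness of the inductive bound, of Lemma \ref{lemma inequality}, and of Proposition \ref{embedding dimension log canonical threshold1}, mediated by Lemma \ref{embedding dimension log canonical threshold2}) closes the argument.
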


\begin{proof}
We prove this by induction on the dimension of $\mathbb D=(D,w)$.
If the dimension of $R_{\mathbb D}$ is $1$, then  $e(R_{\mathbb D})=1$ and $\mathrm{lct}(\mathfrak m_{\mathbb D})=1$.
Therefore the theorem holds when $\mathrm{dim}R_{\mathbb D}=1$.
Now suppose that  $n\ge 2$ and the theorem holds for any special datum  of dimension at most $n-1$.

We assume that $\mathbb D$ is not connected.
Note that   the dimension of a connected component of $\mathbb D$ is less than $n$.
Let $J_1,\dots,J_m$ be the maximal elements of $D$ and $n_i=\mathrm{dim}R_{\mathbb D_{J_i}}$.
Then we have $e(R_\mathbb D) =e(R_{\mathbb D_{J_1}})\cdots e(R_{\mathbb D_{J_m}})$,
  $\mathrm{emb}(R_{\mathbb D})=\mathrm{emb}(R_{\mathbb D_{J_1}})+\cdots+\mathrm{emb}(R_{\mathbb D_{J_m}})$ and
 $\mathrm{lct}(\mathfrak m_{{\mathbb D}})=\mathrm{lct}(\mathfrak m_{\mathbb D_{J_1}})+\cdots+\mathrm{lct}(\mathfrak m_{\mathbb D_{J_m}})$ by Proposition \ref{non-connected} and Proposition \ref{log canonical threshold}.
By hypothesis, we have for $i=1,\dots,m$
$$e(R_{\mathbb D_{J_i}})\le 2^{n_i-\lceil\mathrm{lct}(\mathfrak m_{{\mathbb D_{J_i}}})\rceil}$$
and the equality holds if and only if $\mathrm{emb}(R_{\mathbb D_{J_i}})=2n_i-\lceil\mathrm{lct}(\mathfrak m_{{\mathbb D_{J_i}}})\rceil$.
Therefore
\begin{align*}
e(R_\mathbb D) & =e(R_{\mathbb D_{J_1}})\cdots e(R_{\mathbb D_{J_m}})\\
&\le 2^{n_1-\lceil\mathrm{lct}(\mathfrak m_{{\mathbb D_{J_1}}})\rceil}\cdots2^{n_m-\lceil\mathrm{lct}(\mathfrak m_{{\mathbb D_{J_m}}})\rceil}\\
& \le 2^{n-\lceil\mathrm{lct}(\mathfrak m_{{\mathbb D}})\rceil}.
\end{align*}
Note that 
$$\mathrm{emb}(R_{\mathbb D})\le 2n-\lceil\mathrm{lct}(\mathfrak m_{{\mathbb D_{J_1}}})\rceil\cdots-\lceil\mathrm{lct}(\mathfrak m_{{\mathbb D_{J_m}}})\rceil\le 2n-\lceil\mathrm{lct}(\mathfrak m_{{\mathbb D}})\rceil$$
 by Proposition \ref{embedding dimension log canonical threshold1}.
The equality $e(R_{\mathbb D})=2^{n-\lceil\mathrm{lct}(\mathfrak m_{{\mathbb D}})\rceil}$ holds if and only if
$\mathrm{emb}(R_{\mathbb D_{J_i}})=2n_i-\lceil\mathrm{lct}(\mathfrak m_{{\mathbb D_{J_i}}})\rceil$ for $i=1,\dots,m$
and $\lceil\mathrm{lct}(\mathfrak m_{{\mathbb D_{J_1}}})\rceil+\dots+\lceil\mathrm{lct}(\mathfrak m_{{\mathbb D_{J_m}}})\rceil=\lceil\mathrm{lct}(\mathfrak m_{{\mathbb D}})\rceil$
if and only if
$\mathrm{emb}(R_{\mathbb D})=2n-\lceil\mathrm{lct}(\mathfrak m_{{\mathbb D}})\rceil$. 
Therefore the theorem holds if $\mathbb D$ is not connected.

We assume that $\mathbb D=(D,w)$ is connected.
Let $J$ be the maximal element of $D$ and $J_1,\dots,J_m$ be the elements of $D$ such that $J=J_1\cup\dots\cup J_m$ and
$J_i\sqsubset  J$ for $i=1,\dots,m$.
Let $a=w(J_1)$  and $n_i=\mathrm{dim}R_{\mathbb D_{J_i}}$.
Note that $w(J_1)=\cdots=w(J_m)$ by the definition of a special datum and  the dimension of a connected component of $\mathbb D\setminus J$ is less than $n$.

If $\mathrm{lct}(\mathfrak m_{{\mathbb D}})=\frac{\mathrm{lct}(\mathfrak m_{\mathbb D\setminus J})}{a}$, then by Proposition \ref{lct=lct} $$e(R_{\mathbb D})=ae(R_{\mathbb D\setminus J}).$$
Therefore by hypothesis, Proposition \ref{non-connected}  and Lemma \ref{lemma inequality}, we have 
\begin{align*}
e(R_\mathbb D) & =ae(R_{\mathbb D\setminus J})\\
&=ae(R_{\mathbb D_{J_1}})\cdots e(R_{\mathbb D_{J_m}})\\
&\le a2^{n_1-\lceil\mathrm{lct}(\mathfrak m_{\mathbb D_{J_1}})\rceil}\cdots2^{n_m-\lceil\mathrm{lct}(\mathfrak m_{{\mathbb D_{J_m}}})\rceil}\\
&\le a2^{n-\lceil\mathrm{lct}(\mathfrak m_{\mathbb D\setminus J})\rceil}\\
&\le 2^{n-\lceil\mathrm{lct}(\mathfrak m_{\mathbb D})\rceil}
\end{align*}
By Proposition \ref{embedding dimension log canonical threshold1}, Lemma \ref{embedding dimension log canonical threshold2} and Lemma \ref{lemma inequality}, the equality $e(R_{\mathbb D})=2^{n-\lceil\mathrm{lct}(\mathfrak m_{{\mathbb D}})\rceil}$ holds if and only if
$$\mathrm{emb}(R_{\mathbb D_{J_i}})=2n_i-\lceil\mathrm{lct}(\mathfrak m_{{\mathbb D_{J_i}}})\rceil\ \ \mbox{for}\ i=1,\dots,m,$$
 $$\lceil\mathrm{lct}(\mathfrak m_{{\mathbb D_{J_1}}})\rceil+\dots+\lceil\mathrm{lct}(\mathfrak m_{{\mathbb D_{J_m}}})\rceil=\lceil\mathrm{lct}(\mathfrak m_{{\mathbb D\setminus J}})\rceil,$$
 $$a=2\ \  \mbox{and}\ \  \lceil\mathrm{lct}(\mathfrak m_{{\mathbb D\setminus J}})\rceil-\lceil\mathrm{lct}(\mathfrak m_{\mathbb D})\rceil=1$$
if and only if
$\mathrm{emb}(R_{\mathbb D})=2n-\lceil\mathrm{lct}(\mathfrak m_{{\mathbb D}})\rceil$. 
Therefore the theorem holds if $\mathbb D$ is  connected and $\mathrm{lct}(\mathfrak m_{{\mathbb D}})=\frac{\mathrm{lct}(\mathfrak m_{\mathbb D\setminus J})}{a}$.

If $1=\mathrm{lct}(\mathfrak m_{{\mathbb D}})>\frac{\mathrm{lct}(\mathfrak m_{\mathbb D\setminus J})}{a}$,
then by Corollary \ref{emb 2n-1}, we have $e(R_{\mathbb D})\le 2^{n-1}$
and the equality holds if and only if $\mathrm{emb}(R_{\mathbb D})=2n-1$.
Therefore the theorem holds if $\mathbb D$ is  connected and $\mathrm{lct}(\mathfrak m_{{\mathbb D}})=1$.
\end{proof}

Theorem \ref{Main Theorem A} follows from Theorem \ref{Watanabe main theorem} and Theorem \ref{Main theorem}.

\section{Lower bound of the multiplicity}

In this section, we give a lower bound of the multiplicity of abelian quotient complete intersection singularities.

\begin{defn}
Let $\mathbb D=(D,w)$ be an $n$-dimensional connected special datum and $J\in D$ be the maximal element of $D$.
Then we define 
\[
  \alpha(\mathbb D) = \begin{cases}
    \mathrm{min}\{\mathrm{lct}(\mathfrak m_{{{\mathbb D}}\setminus J}),{w(J')} \}& ( \mathrm{if}\ n\ge 2\ \mathrm{and}\ J'\sqsubset  J) \\
    1 & (\mathrm{if}\ n=1)
  \end{cases}
\]
and 
\[
  \beta(\mathbb D) = \begin{cases}
    {w(J')} & ( \mathrm{if}\ n\ge 2\ \mathrm{and}\ J'\sqsubset  J) \\
    1 & (\mathrm{if}\ n=1)
  \end{cases}
.\]

\end{defn}

\begin{prop}
Let $\mathbb D=(D,w)$ be an $n$-dimensional special datum.
If $R_{\mathbb D}$ is a hypersurface {\rm(}i.e. $\mathrm{emb}(R_{\mathbb D})=\mathrm{dim}R_{\mathbb D}+1${\rm)}, then
$$e(R_\mathbb D)=\prod_{J\in D}\alpha(\mathbb D_J).$$
\end{prop}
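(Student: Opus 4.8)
The plan is to induct on $n = \dim R_{\mathbb D}$, using the reduction $\mathbb D \rightsquigarrow \mathbb D\setminus J$ together with the fact that $R_{\mathbb D}$ being a hypersurface forces a very rigid structure on the graph. First I would observe that if $\mathbb D$ is not connected, then $R_{\mathbb D}$ is a hypersurface only when $\mathbb D$ itself has a single connected component (since $\mathrm{emb}$ is additive and $\dim$ is additive over components by Proposition \ref{non-connected}, and each component contributes at least $\mathrm{emb} \ge \dim$, with a genuine hypersurface contributing exactly $\dim+1$ — so only one component can have $|J|\ge 2$, and the others are $1$-dimensional with $\alpha(\mathbb D_{\{i\}})=1$, contributing nothing to the product). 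Hence we may assume $\mathbb D$ connected with maximal element $J$; the base case $n=1$ gives $R_{\mathbb D}=\mathbb C[x_1]$, $e=1=\alpha(\mathbb D_{\{1\}})$.

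For the inductive step, write $J = J_1 \cup \dots \cup J_m$ with $J_i \sqsubset J$. Since $R_{\mathbb D}$ is a hypersurface and $\mathrm{emb}(R_{\mathbb D}) = \mathrm{emb}(R_{\mathbb D\setminus J}) + 1$ while $\dim R_{\mathbb D} = \dim R_{\mathbb D\setminus J}$, the datum $\mathbb D\setminus J$ must also be a hypersurface. But the connected components of $\mathbb D \setminus J$ are exactly $\mathbb D_{J_1}, \dots, \mathbb D_{J_m}$ (after the reweighting), and by the additivity argument from the first paragraph a hypersurface can have only one component with embedding dimension exceeding its dimension. So at most one of the $\mathbb D_{J_i}$ is a genuine hypersurface of dimension $\ge 2$, and the rest are $1$-dimensional, i.e. $|J_i|=1$; in particular $m-1$ of the $J_i$ are singletons. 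Up to relabelling, $J_1$ is the (possibly trivial) hypersurface component and $J_2, \dots, J_m$ are singletons with $\dim R_{\mathbb D_{J_i}} = 1$, so $\alpha(\mathbb D_{J_i}) = 1$ for $i \ge 2$.

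Next I would compute $e(R_{\mathbb D})$ via Proposition \ref{lct=lct} and Proposition \ref{key proposition2}. Set $a = w(J_1) = \dots = w(J_m)$. Proposition \ref{log canonical threshold max 1 lct} gives $\mathrm{lct}(\mathfrak m_{\mathbb D}) = \max\{1, \mathrm{lct}(\mathfrak m_{\mathbb D\setminus J})/a\}$, and by Proposition \ref{log canonical threshold} we have $\mathrm{lct}(\mathfrak m_{\mathbb D\setminus J}) = \mathrm{lct}(\mathfrak m_{\mathbb D_{J_1}}) + (m-1)$ (the singletons each contribute $1$). In the case $\mathrm{lct}(\mathfrak m_{\mathbb D}) = \mathrm{lct}(\mathfrak m_{\mathbb D\setminus J})/a$, Proposition \ref{lct=lct} yields $e(R_{\mathbb D}) = a\, e(R_{\mathbb D\setminus J}) = a\, e(R_{\mathbb D_{J_1}})$, and since $a = w(J_1) = w(J')$ for $J' \sqsubset J$ while $\mathrm{lct}(\mathfrak m_{\mathbb D\setminus J}) \ge a$ in this case, we get $\alpha(\mathbb D) = \min\{\mathrm{lct}(\mathfrak m_{\mathbb D\setminus J}), a\} = a$; combining with the induction hypothesis $e(R_{\mathbb D_{J_1}}) = \prod_{J''\in D_{J_1}}\alpha((\mathbb D_{J_1})_{J''})$ and the fact that the $\alpha$'s of the singleton components are $1$, the product formula telescopes correctly. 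In the remaining case $1 = \mathrm{lct}(\mathfrak m_{\mathbb D}) > \mathrm{lct}(\mathfrak m_{\mathbb D\setminus J})/a$, Proposition \ref{key proposition2} gives $e(R_{\mathbb D}) \ge \mathrm{lct}(\mathfrak m_{\mathbb D\setminus J})\, e(R_{\mathbb D\setminus J})$; here one needs the matching upper bound $e(R_{\mathbb D}) \le \mathrm{lct}(\mathfrak m_{\mathbb D\setminus J})\, e(R_{\mathbb D\setminus J})$ to conclude equality, and then $\mathrm{lct}(\mathfrak m_{\mathbb D\setminus J}) < a$ forces $\alpha(\mathbb D) = \mathrm{lct}(\mathfrak m_{\mathbb D\setminus J})$, again making the product formula go through by induction.

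The main obstacle I anticipate is the second case: Proposition \ref{key proposition2} only supplies the lower bound $e(R_{\mathbb D}) \ge \mathrm{lct}(\mathfrak m_{\mathbb D\setminus J})\, e(R_{\mathbb D\setminus J})$, so I will need a complementary upper bound of the same shape, presumably by the same mixed-multiplicity / minimal-reduction technique used in the proof of Proposition \ref{key proposition2} (bounding $e(R_{\mathbb D}) = \sigma_A(\dots)$ from above by $\frac{1}{q'}\sigma_A(\dots,(Y^{a'p},f^{q'}),\dots)$ using monotonicity of $\sigma$ under passing to smaller ideals the other way, i.e. exploiting that $\mathrm{lct}(\mathfrak m_{\mathbb D\setminus J})/a < 1$ means $a'p < aq'$). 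I expect this to require care in tracking which ideal contains which, and in checking that the relevant ideal is $\mathfrak m$-primary so that $\sigma_A$ is finite. Everything else — the combinatorial reduction to "one hypersurface component plus singletons" and the telescoping of the $\alpha$-product — should be routine given the results already proved in Sections 2 and 3.
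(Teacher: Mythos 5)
Your structural reduction (one connected component carries the whole singularity, the rest are one--dimensional and contribute $\alpha=1$) is sound, and your Case 1, where $\mathrm{lct}(\mathfrak m_{\mathbb D})=\mathrm{lct}(\mathfrak m_{\mathbb D\setminus J})/a$, does close correctly via the ``moreover'' part of Proposition \ref{lct=lct}. The genuine gap is exactly the one you flag: in the case $1=\mathrm{lct}(\mathfrak m_{\mathbb D})>\mathrm{lct}(\mathfrak m_{\mathbb D\setminus J})/a$ you need the equality $e(R_{\mathbb D})=\mathrm{lct}(\mathfrak m_{\mathbb D\setminus J})\,e(R_{\mathbb D\setminus J})$, but Proposition \ref{key proposition2} supplies only ``$\ge$'', and the upper bound you sketch does not follow from the paper's toolkit. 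Lemma \ref{lemma2 of A2}(1) is monotone in one direction only: shrinking the ideals can only increase $\sigma_A$. Since $a'p<aq'$, the ideal $(Y^{a'p},f^{q'})$ \emph{contains} $(Y^{aq'},f^{q'})$, so replacing the latter by the former gives $\sigma_A(\dots,(Y^{aq'},f^{q'}),\dots)\ge\sigma_A(\dots,(Y^{a'p},f^{q'}),\dots)$ --- precisely the lower bound already used in Proposition \ref{key proposition2}, and nothing in the opposite direction. As written, your second case is not proved.

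The fix is not to strengthen the mixed-multiplicity argument but to notice that the hypersurface hypothesis is far more rigid than you use. By Proposition \ref{embedding dimension}, $\mathrm{emb}(R_{\mathbb D})=n+|\{K\in D:|K|\ge2\}|$, so $\mathrm{emb}=n+1$ forces $D$ to contain \emph{exactly one} non-singleton, namely the maximal element $J$ of the nontrivial component; in particular all of $J_1,\dots,J_m$ are singletons (your ``possibly nontrivial hypersurface component $\mathbb D_{J_1}$'' cannot occur, $\mathbb D\setminus J$ is regular rather than a hypersurface, and your induction never actually recurses). Writing $n_1=|J|$ and $a=w(J_1)$, Lemma \ref{reduce lemma} gives $R_{\mathbb D_J}\cong\mathbb C[x_1,\dots,x_{n_1},Y]/(Y^{a}-x_1\cdots x_{n_1})$, whose multiplicity is the order of the defining equation, i.e.\ $e(R_{\mathbb D})=\min\{a,n_1\}$ --- this single computation covers both of your cases at once. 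On the other side, $\mathrm{lct}(\mathfrak m_{\mathbb D_J\setminus J})=n_1$ by Proposition \ref{log canonical threshold}, so $\prod_{K\in D}\alpha(\mathbb D_K)=\alpha(\mathbb D_J)=\min\{n_1,a\}$, and the two sides agree. This is the paper's proof; it needs neither induction nor Proposition \ref{key proposition2}.
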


\begin{proof}
Let $J_1,\dots,J_m$ be the maximal elements of $D$.
By Proposition \ref{embedding dimension} and Proposition \ref{non-connected}, we may assume that 
$\mathrm{emb}(R_{\mathbb D_{J_1}})=\mathrm{dim}R_{\mathbb D_{J_1}}+1$ and $\mathrm{dim}R_{\mathbb D_{J_i}}=1$ for $i\ge 2$.
Let $J'$ be a element of $D$ with $J'\sqsubset  J_1$, $n_1=\mathrm{dim}R_{\mathbb D_{J_1}}$ and $a=w(J')$.
Then $$R_{\mathbb D_{J_1}}\cong \mathbb C[x_1,\dots,x_{n_1+1}]/(x_{n_1+1}^a-x_1\cdots x_{n_1})$$ 
by Lemma \ref{reduce lemma}.
Therefore we have 
$e(R_{\mathbb D_1})=\mathrm{min}\{a,n_1\}$.
Thus 
$e(R_{\mathbb D})=\mathrm{min}\{a,n_1\}$ by Proposition \ref{non-connected}.

On the other hand, by Proposition \ref{log canonical threshold}, we have $\mathrm{lct}(\mathfrak m_{{{\mathbb D_{J_1}}}\setminus J1})=n_1$.
Note that $|J|=1$ for any element $J$ of $D$ with $J\neq J_1$.
Hence $$\prod_{J\in D}\alpha(\mathbb D_J)=\alpha(\mathbb D_{J_1})=\mathrm{min}\{a,n_1\}.$$
Therefore $e(R_\mathbb D)=\prod_{J\in D}\alpha(\mathbb D_J).$
\end{proof}

\begin{prop}\label{Key Prop 1}
Let $\mathbb D=(D,w)$ be an $n$-dimensional special datum.
Then
$$e(R_\mathbb D)\ge\prod_{J\in D}\alpha(\mathbb D_J)$$
and the equality $e(R_\mathbb D)=\prod_{J\in D}\alpha(\mathbb D_J)$ holds  if $\alpha(\mathbb D_J)=\beta(\mathbb D_J)$ for every $J\in  D$.
\end{prop}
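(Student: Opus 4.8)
The plan is to induct on $n=\dim R_{\mathbb D}$, separating the case where $\mathbb D$ is non-connected from the case where it is connected, and in the connected case exploiting the dichotomy of Proposition \ref{log canonical threshold max 1 lct}. The base case $n=1$ is immediate, since then $R_{\mathbb D}=\mathbb C[x_1]$ and $D=\{\{1\}\}$, so $e(R_{\mathbb D})=1=\prod_{J\in D}\alpha(\mathbb D_J)$ with $\alpha(\mathbb D_{\{1\}})=\beta(\mathbb D_{\{1\}})=1$. Before starting I would record the routine fact that $\alpha$ and $\beta$ of the sub-data $\mathbb D_{J''}$ are intrinsic: if $J_1,\dots,J_m$ are the maximal elements of $D$ then $(\mathbb D_{J_i})_{J''}=\mathbb D_{J''}$ for $J''\subset J_i$, and if $\mathbb D$ is connected with maximal element $J$ then $(\mathbb D\setminus J)_{J''}=\mathbb D_{J''}$ for every $J''\in D\setminus\{J\}$, both because the nested weight-normalizations cancel. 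Consequently, when $J_1,\dots,J_m$ are the maximal elements of $D$ (resp. the maximal elements of $D$ below $J$) the set $D$ (resp. $D\setminus\{J\}$) is the disjoint union of the $D_{J_i}$, the connected components of $\mathbb D$ (resp. of $\mathbb D\setminus J$) are the $\mathbb D_{J_i}$, and $\prod_{J''}\alpha(\mathbb D_{J''})$ factors compatibly through them.

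If $\mathbb D$ is not connected, let $J_1,\dots,J_m$ ($m\ge2$) be its maximal elements; each $\mathbb D_{J_i}$ has dimension $<n$, so by the induction hypothesis $e(R_{\mathbb D_{J_i}})\ge\prod_{J''\in D_{J_i}}\alpha(\mathbb D_{J''})$ with equality whenever $\alpha=\beta$ on $D_{J_i}$. Multiplying over $i$ via Proposition \ref{non-connected}(3) and the factorization above yields both assertions here.

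Now let $\mathbb D$ be connected with maximal element $J$; since $n\ge2$, $J$ contains every $\{i\}$, so $J=\{1,\dots,n\}$ and $|J|=n\ge2$. Write $J=J_1\cup\dots\cup J_m$ with $J_i\sqsubset J$ (so $m\ge2$) and put $a=w(J_1)=\dots=w(J_m)\ge2$; then $\mathbb D_J=\mathbb D$, $\alpha(\mathbb D_J)=\mathrm{min}\{\mathrm{lct}(\mathfrak m_{\mathbb D\setminus J}),a\}$ and $\beta(\mathbb D_J)=a$. By Proposition \ref{log canonical threshold max 1 lct} there are two alternatives. If $\mathrm{lct}(\mathfrak m_{\mathbb D\setminus J})\ge a$, then $\mathrm{lct}(\mathfrak m_{\mathbb D})=\mathrm{lct}(\mathfrak m_{\mathbb D\setminus J})/a$ and $\alpha(\mathbb D_J)=a=\beta(\mathbb D_J)$, so Proposition \ref{lct=lct} gives $e(R_{\mathbb D})=a\,e(R_{\mathbb D\setminus J})$. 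If $\mathrm{lct}(\mathfrak m_{\mathbb D\setminus J})<a$, then $\mathrm{lct}(\mathfrak m_{\mathbb D})=1>\mathrm{lct}(\mathfrak m_{\mathbb D\setminus J})/a$ and $\alpha(\mathbb D_J)=\mathrm{lct}(\mathfrak m_{\mathbb D\setminus J})<a=\beta(\mathbb D_J)$, so Proposition \ref{key proposition2} gives $e(R_{\mathbb D})\ge\mathrm{lct}(\mathfrak m_{\mathbb D\setminus J})\,e(R_{\mathbb D\setminus J})$. In either case the coefficient of $e(R_{\mathbb D\setminus J})$ equals $\alpha(\mathbb D_J)$, and since the connected components $\mathbb D_{J_i}$ of $\mathbb D\setminus J$ have dimension $<n$, the induction hypothesis and Proposition \ref{non-connected}(3) give $e(R_{\mathbb D\setminus J})\ge\prod_{J''\in D\setminus\{J\}}\alpha(\mathbb D_{J''})$; hence $e(R_{\mathbb D})\ge\alpha(\mathbb D_J)\prod_{J''\in D\setminus\{J\}}\alpha(\mathbb D_{J''})=\prod_{J''\in D}\alpha(\mathbb D_{J''})$. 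If moreover $\alpha=\beta$ on all of $D$, then we are necessarily in the first alternative, $e(R_{\mathbb D})=a\,e(R_{\mathbb D\setminus J})$ exactly, and the induction hypothesis makes the remaining inequality an equality; in the second alternative $\alpha(\mathbb D_J)\ne\beta(\mathbb D_J)$, so the equality clause is vacuous.

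The step I expect to require the most care is the bookkeeping in the first paragraph: checking that $\alpha$ and $\beta$ of each $\mathbb D_{J''}$ are unchanged when $\mathbb D$ is replaced by $\mathbb D_{J_i}$ or by $\mathbb D\setminus J$, so that both sides of the target inequality split along connected components in the same manner. Everything else is a direct combination of Propositions \ref{non-connected}, \ref{log canonical threshold max 1 lct}, \ref{lct=lct} and \ref{key proposition2}.
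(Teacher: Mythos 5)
Your proof is correct and follows essentially the same route as the paper's: induction on dimension, splitting off connected components via Proposition \ref{non-connected} in the non-connected case, and in the connected case using the dichotomy of Proposition \ref{log canonical threshold max 1 lct} to invoke Proposition \ref{lct=lct} (when $\mathrm{lct}(\mathfrak m_{\mathbb D\setminus J})\ge w(J_1)$, which is forced when $\alpha=\beta$) or Proposition \ref{key proposition2} (otherwise), so that the factor multiplying $e(R_{\mathbb D\setminus J})$ is exactly $\alpha(\mathbb D)$. The only difference is that you make explicit the compatibility $(\mathbb D\setminus J)_{J''}=\mathbb D_{J''}$ and $(\mathbb D_{J_i})_{J''}=\mathbb D_{J''}$, which the paper uses tacitly.
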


\begin{proof}
We will prove this by induction on the dimension of $\mathbb D=(D,w)$.
If the dimension of $R_{\mathbb D}$ is $1$, then $e(R_{\mathbb D})=\alpha(\mathbb D)=\beta(\mathbb D)=1$.
Therefore the proposition holds when $\mathrm{dim}R_{\mathbb D}=1$.
Now suppose that  $n\ge 2$ and the proposition holds for any special datum  of dimension at most $n-1$.

We assume that $\mathbb D$ is not connected.
Let $J_1,\dots,J_m$ be the maximal elements of $D$ and $n_i=\mathrm{dim}R_{\mathbb D_{J_i}}$.
Then we have $e(R_\mathbb D) =e(R_{\mathbb D_{J_1}})\cdots e(R_{\mathbb D_{J_m}})$ by Proposition \ref{non-connected}.
By hypothesis, we have
$$e(R_\mathbb D)  =e(R_{\mathbb D_{J_1}})\cdots e(R_{\mathbb D_{J_m}})
\ge \prod_{J\in D_{J_1}}\alpha(\mathbb D_{J})\cdots\prod_{J\in D_{J_m}}\alpha(\mathbb D_{J})
= \prod_{J\in D}\alpha(\mathbb D_J).
$$
Therefore the inequality holds if  $\mathbb D$ is not connected.
We assume that  $\alpha(\mathbb D_J)=\beta(\mathbb D_J)$ for every $J\in  D$.
Then by hypothesis, we have 
$$
e(R_\mathbb D)  =e(R_{\mathbb D_{J_1}})\cdots e(R_{\mathbb D_{J_m}})
= \prod_{J\in D_{J_1}}\alpha(\mathbb D_{J})\cdots\prod_{J\in D_{J_m}}\alpha(\mathbb D_{J})
= \prod_{J\in D}\alpha(\mathbb D_J).
$$
Therefore the proposition holds if  $\mathbb D$ is not connected.

We assume that $\mathbb D$ is  connected.
Let $J$ be the maximal element of $D$ and $J_1,\dots,J_m$ be the elements of $D$ such that $J=J_1\cup\dots\cup J_m$ and
$J_i\sqsubset  J$ for $i=1,\dots,m$.
Then we have $e(R_{\mathbb D\setminus J}) =e(R_{\mathbb D_{J_1}})\cdots e(R_{\mathbb D_{J_m}})$ by Proposition \ref{non-connected}.
Then by Proposition \ref{lct=lct} and Proposition \ref{key proposition2},
we have 
$$e(R_\mathbb D)\ge\alpha(\mathbb D)\prod_{1\le i\le m}e(R_{\mathbb D_{J_1}})\cdots e(R_{\mathbb D_{J_m}}).$$
Note that $\mathbb D=\mathbb D_J$.
By hypothesis, we have
\begin{align*}
e(R_\mathbb D) & \ge\alpha(\mathbb D)\prod_{1\le i\le m}e(R_{\mathbb D_{J_1}})\cdots e(R_{\mathbb D_{J_m}})\\
&\ge \alpha(\mathbb D)\prod_{K\in D_{J_1}}\alpha(\mathbb D_{K})\cdots\prod_{K\in D_{J_m}}\alpha(\mathbb D_{K})
= \prod_{K\in D}\alpha(\mathbb D_{K}).
\end{align*}
Therefore the inequality holds if  $\mathbb D$ is  connected.
We assume that  $\alpha(\mathbb D_K)=\beta(\mathbb D_K)$ for every $K\in  D$.
Then since $\mathrm{lct}(\mathfrak m_{{{\mathbb D}}\setminus J})\ge{w(J_1)}$,
we have $\mathrm{lct}(\mathfrak m_{{\mathbb D}})=\frac{\mathrm{lct}(\mathfrak m_{\mathbb D\setminus J})}{w(J_1)}$
by Proposition \ref{log canonical threshold max 1 lct}.
By  hypothesis and Proposition \ref{lct=lct}, 
\begin{align*}
e(R_\mathbb D) & =\alpha(\mathbb D)\prod_{1\le i\le m}e(R_{\mathbb D_{J_1}})\cdots e(R_{\mathbb D_{J_m}})\\
&= \alpha(\mathbb D)\prod_{K\in D_{J_1}}\alpha(\mathbb D_{K})\cdots\prod_{K\in D_{J_m}}\alpha(\mathbb D_{K})
= \prod_{K\in D}\alpha(\mathbb D_{K}).
\end{align*}
Therefore the proposition holds if  $\mathbb D$ is  connected.
\end{proof}

In order to prove Proposition \ref{Key Prop 2}, we need the following inequality.
\begin{lem}\label{easy inequality}
For positive real numbers $x_1,\dots,x_n,c_1,\dots,c_n\in\mathbb R_{>0}$,
$$\Bigl(\frac{x_1}{c_1}\Bigr)^{x_1}\cdots\Bigl(\frac{x_n}{c_n}\Bigr)^{x_n}\ge\Bigl(\frac{x_1+\cdots+x_n}{c_1+\cdots+c_n}\Bigr)^{x_1+\cdots+x_n}.$$
Moreover, the equality holds if and only if $\frac{x_1}{c_1}=\cdots=\frac{x_n}{c_n}$.
\end{lem}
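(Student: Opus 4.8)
The plan is to take logarithms, normalize, and recognize the resulting statement as Gibbs' inequality (nonnegativity of relative entropy), which follows from the elementary estimate $\log v\le v-1$. Since every factor on both sides is positive, the asserted inequality is equivalent, after taking natural logarithms, to
$$\sum_{i=1}^{n}x_i\log\frac{x_i}{c_i}\ \ge\ X\log\frac{X}{C},\qquad X:=x_1+\cdots+x_n,\ \ C:=c_1+\cdots+c_n .$$
Dividing by $X>0$ and setting $p_i:=x_i/X$ and $q_i:=c_i/C$, so that $p_i,q_i>0$ and $\sum_i p_i=\sum_i q_i=1$, a short computation using $x_i/c_i=(p_i/q_i)(X/C)$ gives
$$\frac{1}{X}\sum_{i=1}^{n}x_i\log\frac{x_i}{c_i}-\log\frac{X}{C}\ =\ \sum_{i=1}^{n}p_i\log\frac{p_i}{q_i}.$$
Thus it suffices to prove that $\sum_i p_i\log(p_i/q_i)\ge 0$ for all probability vectors $(p_i)$, $(q_i)$ with positive entries, with equality exactly when $p_i=q_i$ for all $i$.

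For this I would use the inequality $\log v\le v-1$, valid for every $v>0$ with equality if and only if $v=1$. Applying it with $v=q_i/p_i$ and multiplying by $p_i>0$ yields $p_i\log(q_i/p_i)\le q_i-p_i$; summing over $i$ gives $\sum_i p_i\log(q_i/p_i)\le\sum_i(q_i-p_i)=0$, i.e. $\sum_i p_i\log(p_i/q_i)\ge 0$. Exponentiating the chain back through the reduction above recovers the claimed inequality. (Equivalently, this is Jensen's inequality applied to the strictly convex function $\phi(t)=t\log t$ with weights $q_i$ at the points $p_i/q_i$; I would probably just record the elementary argument.)

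For the equality case, equality holds throughout precisely when equality holds in each instance of $\log v\le v-1$, that is, when $q_i/p_i=1$ for every $i$, i.e. $x_i/X=c_i/C$ for every $i$. Unwinding this: if all the ratios $x_i/c_i$ are equal to some $r$, then $X=rC$, forcing $r=X/C$ and hence $x_i/X=c_i/C$; conversely $x_i/X=c_i/C$ for all $i$ gives $x_i/c_i=X/C$ for all $i$. So equality in the lemma holds if and only if $x_1/c_1=\cdots=x_n/c_n$, as asserted.

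I do not anticipate a real obstacle here: the only points that need attention are the bookkeeping in the normalization step and carefully tracking the equality condition through the two reductions. If one wants a fully self-contained proof, one should additionally include the one-line justification of $\log v\le v-1$ (e.g. from the convexity of $t\mapsto e^{t}$, or by noting that $v-1-\log v$ has a strict global minimum at $v=1$).
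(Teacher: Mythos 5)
Your proof is correct and follows essentially the same route as the paper: both take logarithms and reduce the statement to the concavity of $\log$ applied with weights $x_i/(x_1+\cdots+x_n)$, including the same characterization of equality. The only cosmetic difference is that the paper invokes the weighted Jensen inequality for $\log$ directly, while you normalize to probability vectors and prove the needed instance (Gibbs' inequality) from the elementary bound $\log v\le v-1$, which makes your version slightly more self-contained.
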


\begin{proof}
Since the logarithm function $f(x)=\mathrm{log}{x}$ is concave on its domain $(0,\infty )$, we have 
$$\frac{x_1}{x_1+\cdots+x_n}\mathrm{log}\frac{c_1}{x_1}+\cdots+\frac{x_n}{x_1+\cdots+x_n}\mathrm{log}\frac{c_n}{x_n}\le \mathrm{log}\frac{c_1+\cdots+c_n}{x_1+\cdots+x_n}$$
and the equality holds if and only if $\frac{x_1}{c_1}=\cdots=\frac{x_n}{c_n}$.
This implies that 
$$\Bigl(\frac{c_1}{x_1}\Bigr)^{x_1}\cdots\Bigl(\frac{c_n}{x_n}\Bigr)^{x_n}\le\Bigl(\frac{c_1+\cdots+c_n}{x_1+\cdots+x_n}\Bigr)^{x_1+\cdots+x_n}$$
and the equality holds if and only if $\frac{x_1}{c_1}=\cdots=\frac{x_n}{c_n}$.
\end{proof}

\begin{prop}\label{Key Prop 2}
Let $\mathbb D=(D,w)$ be an $n$-dimensional special datum.
Let $\mathfrak a_\mathbb D\subset \mathbb C[x_1,\dots,x_n]$ be the ideal generated by $x_J^{w(J)}$ for $J\in D$.
Then
$$\prod_{J\in D}\alpha(\mathbb D_J)\ge \frac{1}{|G_\mathbb D|}   \biggl(\frac{n}{\mathrm{lct}(\mathfrak m_\mathbb D)}  \biggr)^n$$
and the equality $\prod_{J\in D}\alpha(\mathbb D_J)=\frac{1}{|G_\mathbb D|}   \bigl(\frac{n}{\mathrm{lct}(\mathfrak m_\mathbb D)}  \bigr)^n$ holds if and only if there is a positive integer $q$ such that the integral closure 
$\overline{\mathfrak a_\mathbb D}$ of $\mathfrak a_\mathbb D$ is equal to $(x_1,\dots,x_n)^q$.
Moreover, in this case $$q=\frac{n}{\mathrm{lct}(\mathfrak m_\mathbb D)}=w(\{1\})=\cdots=w(\{n\})$$
and $\alpha(\mathbb D_J)=\beta(\mathbb D_J)$ for every $J\in  D$.
\end{prop}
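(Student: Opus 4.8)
The plan is to prove the inequality and its equality case simultaneously, by induction on $n=\dim R_{\mathbb D}$, in the same style as Proposition \ref{Key Prop 1}. For $n=1$ everything is trivial: $D=\{\{1\}\}$, $\alpha(\mathbb D)=1=\frac{1}{|G_{\mathbb D}|}\bigl(\frac{n}{\mathrm{lct}(\mathfrak m_{\mathbb D})}\bigr)^{n}$, and $\overline{\mathfrak a_{\mathbb D}}=(x_{1})=(x_{1})^{1}$. For $n\ge 2$, I first handle the case where $\mathbb D$ is not connected: if $J_{1},\dots,J_{m}$ are the maximal elements of $D$ and $n_{i}=\dim R_{\mathbb D_{J_{i}}}$, then $\prod_{J\in D}\alpha(\mathbb D_{J})=\prod_{i}\prod_{J\in D_{J_{i}}}\alpha(\mathbb D_{J})$, $|G_{\mathbb D}|=\prod_{i}|G_{\mathbb D_{J_{i}}}|$ by Proposition \ref{non-connected}, and $\mathrm{lct}(\mathfrak m_{\mathbb D})=\sum_{i}\mathrm{lct}(\mathfrak m_{\mathbb D_{J_{i}}})$ by Proposition \ref{log canonical threshold}. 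Applying the induction hypothesis to each $\mathbb D_{J_{i}}$ and then Lemma \ref{easy inequality} to the numbers $n_{i}$ and $\mathrm{lct}(\mathfrak m_{\mathbb D_{J_{i}}})$ gives the bound, and forces, at equality, that every $\mathbb D_{J_{i}}$ lies in its own equality case and that all ratios $n_{i}/\mathrm{lct}(\mathfrak m_{\mathbb D_{J_{i}}})$ coincide.

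Now suppose $\mathbb D$ is connected with maximal element $J$; if $|J|=1$ then $n=1$, so assume $|J|\ge 2$ and write $J=J_{1}\sqcup\cdots\sqcup J_{m}$ with $J_{i}\sqsubset J$ and $a=w(J_{1})=\cdots=w(J_{m})$. Put $L=\mathrm{lct}(\mathfrak m_{\mathbb D\setminus J})$. Using the quick check that $(\mathbb D\setminus J)_{J_{i}}=\mathbb D_{J_{i}}$, one has $L=\sum_{i}\mathrm{lct}(\mathfrak m_{\mathbb D_{J_{i}}})$ (Proposition \ref{log canonical threshold}), $\mathrm{lct}(\mathfrak m_{\mathbb D})=\max\{1,L/a\}$ (Proposition \ref{log canonical threshold max 1 lct}), and $|G_{\mathbb D}|=a^{\,n-1}\prod_{i}|G_{\mathbb D_{J_{i}}}|$ (Lemma \ref{group reduction} together with Proposition \ref{non-connected}). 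Since $\prod_{J'\in D}\alpha(\mathbb D_{J'})=\alpha(\mathbb D)\prod_{i}\prod_{J'\in D_{J_{i}}}\alpha(\mathbb D_{J'})$, the induction hypothesis and Lemma \ref{easy inequality} reduce the claim to the elementary inequality $\alpha(\mathbb D)\,a^{\,n-1}\,\mathrm{lct}(\mathfrak m_{\mathbb D})^{n}\ge L^{n}$. If $L\ge a$, then $\mathrm{lct}(\mathfrak m_{\mathbb D})=L/a$ and $\alpha(\mathbb D)=a$, so this is an equality, and the Proposition's inequality is an equality exactly when the induction hypothesis bounds and Lemma \ref{easy inequality} are equalities; if $L<a$, then $\mathrm{lct}(\mathfrak m_{\mathbb D})=1$ and $\alpha(\mathbb D)=L$, and since $n\ge 2$ and $a>L\ge 1$ this inequality — hence the Proposition's inequality — is strict, so no equality occurs in this subcase.

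Finally I must match the equality case with the condition $\overline{\mathfrak a_{\mathbb D}}=(x_{1},\dots,x_{n})^{q}$. When equality holds, the recursion above makes each relevant $\mathbb D_{J_{i}}$ satisfy $\overline{\mathfrak a_{\mathbb D_{J_{i}}}}=(x_{j}\mid j\in J_{i})^{q_{0}}$ for a single integer $q_{0}=n_{i}/\mathrm{lct}(\mathfrak m_{\mathbb D_{J_{i}}})$ common to all $i$; reading off the vertices of these Newton polyhedra forces $w(\{j\})$ to equal one value $q$ for all $j$ ($q=q_{0}$ in the non-connected case, $q=aq_{0}$ in the connected case), while every generator $x_{K}^{w(K)}$ of $\mathfrak a_{\mathbb D}$ satisfies $w(K)|K|\ge q$, whence $\mathrm{Newt}(\mathfrak a_{\mathbb D})=\{v\in\mathbb R^{n}_{\ge 0}\mid v_{1}+\cdots+v_{n}\ge q\}$, i.e. $\overline{\mathfrak a_{\mathbb D}}=(x_{1},\dots,x_{n})^{q}$; Lemma \ref{log canonical threshold lemma} then gives $q=n/\mathrm{lct}(\mathfrak m_{\mathbb D})$, and the induction hypothesis (together with $\alpha(\mathbb D)=a=\beta(\mathbb D)$ in the connected case) yields $\alpha(\mathbb D_{J'})=\beta(\mathbb D_{J'})$ for every $J'\in D$. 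Conversely, if $\overline{\mathfrak a_{\mathbb D}}=(x_{1},\dots,x_{n})^{q}$, the same reading gives $w(\{j\})=q$ for all $j$ and $w(K)|K|\ge q$ for all $K\in D$; the divisibility $w(J_{i})\mid w(\{j\})$ from the special datum axioms then lets one descend to $\overline{\mathfrak a_{\mathbb D_{J_{i}}}}=(x_{j}\mid j\in J_{i})^{q/w(J_{i})}$ for each $i$, so by the induction hypothesis each $\mathbb D_{J_{i}}$ lies in its equality case with all ratios equal, and $L\ge a$ holds because $\mathrm{lct}(\mathfrak m_{\mathbb D})\ge 1$ forces $n\ge q$ — which is precisely the configuration producing equality above. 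The main technical obstacle is this last paragraph: keeping the Newton polyhedron exponents integral via the divisibility axiom, and verifying that ``the integral closure of $\mathfrak a_{\mathbb D}$ is a power of the maximal ideal'' is both inherited by and reconstructible from the sub-special-data $\mathbb D_{J_{i}}$.
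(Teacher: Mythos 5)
Your proposal is correct and follows essentially the same route as the paper: induction on $n$, splitting into the non-connected case (Propositions \ref{non-connected} and \ref{log canonical threshold} plus Lemma \ref{easy inequality}) and the connected case (Lemma \ref{group reduction} and Proposition \ref{log canonical threshold max 1 lct}, with strict inequality when $1=\mathrm{lct}(\mathfrak m_{\mathbb D})>\mathrm{lct}(\mathfrak m_{\mathbb D\setminus J})/a$), and characterizing equality through the Newton polyhedron of the monomial ideal. Your isolation of the scalar inequality $\alpha(\mathbb D)\,a^{n-1}\,\mathrm{lct}(\mathfrak m_{\mathbb D})^{n}\ge L^{n}$ and the explicit degree bound $w(K)|K|\ge q$ are just cleaner packagings of the same computations the paper carries out.
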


\begin{proof}
We will prove this by induction on the dimension of $\mathbb D=(D,w)$.
If the dimension of $R_{\mathbb D}$ is $1$, then $|G_{\mathbb D}|=1$, $\mathrm{lct}(\mathfrak m_{\mathbb D})=1$, $ \alpha(\mathbb D)=\beta(\mathbb D)=1$, $w(\{1\})=1$ and $\mathfrak a_\mathbb D=x_1$.
Therefore the proposition holds when $\mathrm{dim}R_{\mathbb D}=1$.
Now suppose that  $n\ge 2$ and the proposition holds for any special datum  of dimension at most $n-1$.

We assume that $\mathbb D$ is not connected.
Let $J_1,\dots,J_m$ be the maximal elements of $D$ and $n_i=\mathrm{dim}R_{\mathbb D_{J_i}}$.
Then we have $e(R_\mathbb D) =e(R_{\mathbb D_{J_1}})\cdots e(R_{\mathbb D_{J_m}})$,
$|G_{\mathbb D}|=|G_{\mathbb D_{J_1}}|\cdots|G_{\mathbb D_{J_m}}|$
   and
 $\mathrm{lct}(\mathfrak m_{{\mathbb D}})=\mathrm{lct}(\mathfrak m_{\mathbb D_{J_1}})+\cdots+\mathrm{lct}(\mathfrak m_{\mathbb D_{J_m}})$ by Proposition \ref{non-connected} and Proposition \ref{log canonical threshold}. 
By hypothesis and Lemma \ref{easy inequality}, we have
\begin{align*}
\prod_{J\in D}\alpha(\mathbb D_J)
 &=\prod_{J\in D_{J_1}}\alpha(\mathbb D_{J})\cdots\prod_{J\in D_{J_m}}\alpha(\mathbb D_{J})\\
&\ge \frac{1}{|G_{\mathbb D_{J_1}}|}\biggl(\frac{n_1}{\mathrm{lct}(\mathfrak m_{\mathbb D_{J_1}})}  \biggr)^{n_1}\cdots\frac{1}{|G_{\mathbb D_{J_m}}|}   \biggl(\frac{n_m}{\mathrm{lct}(\mathfrak m_{\mathbb D_{J_m}})}  \biggr)^{n_m}\\
&\ge \frac{1}{|G_\mathbb D|}\biggl(\frac{n}{\mathrm{lct}(\mathfrak m_\mathbb D)}\biggr)^n.
\end{align*}
Therefore the inequality holds if $\mathbb D$ is not connected.

Let  $n_0=0$ and $N_{i}=n_0+\cdots+n_{i}$ for $i=0,\dots,m-1$. 
Then we may assume that $J_i=\{N_{i-1}+1,\dots,N_{i-1}+n_i\}$ for $i=1,\dots,m$.
Let $\mathfrak a_{\mathbb D_{J_i}}\subset \mathbb C[x_{N_{i-1}+1},\dots,x_{N_{i-1}+n_i}]$ be the ideal generated by $x_J^{w(J)}$ for $J\in D_{J_i}$.

We assume that  $$\prod_{J\in D}\alpha(\mathbb D_J)=\frac{1}{|G_\mathbb D|}   \biggl(\frac{n}{\mathrm{lct}(\mathfrak m_\mathbb D)}  \biggr)^n.$$
Then we have for $i=1,\dots,m$
$$\prod_{J\in D_{J_i}}\alpha(\mathbb D_{J})=\frac{1}{|G_{\mathbb D_{J_i}}|}\biggl(\frac{n_i}{\mathrm{lct}(\mathfrak m_{\mathbb D_{J_i}})}  \biggr)^{n_i},\  \ \frac{n_1}{\mathrm{lct}(\mathfrak m_{\mathbb D_{J_1}})} =\cdots=\frac{n_m}{\mathrm{lct}(\mathfrak m_{\mathbb D_{J_m}})} $$ since  the above two inequalities are equalities.
By hypothesis, we have  for $i=1,\dots,m$, $\overline{\mathfrak a_{\mathbb D_i}}=(x_{N_{i-1}+1},\dots,x_{N_{i-1}+n_i})^{w(\{N_{i-1}+1\})}, $
$$\frac{n_i}{\mathrm{lct}(\mathfrak m_{\mathbb D_{J_i}})}=w(\{{N_{i-1}+1}\})=\cdots=w(\{N_{i-1}+n_i\})$$
and  $\alpha(\mathbb D_J)=\beta(\mathbb D_J)$ for every $J\in  D_{J_i}$.
Therefore we have $\overline{\mathfrak a_{\mathbb D}}=(x_{1},\dots,x_{n})^{w(\{1\})}$, 
$$\frac{n}{\mathrm{lct}(\mathfrak m_\mathbb D)}=w(\{1\})=\cdots=w(\{n\})$$ and $\alpha(\mathbb D_J)=\beta(\mathbb D_J)$ for every $J\in  D$.

We assume that there is a positive integer $q$ such that 
$\overline{\mathfrak a_\mathbb D}=(x_1,\dots,x_n)^q$.
Then we have $\overline{\mathfrak a_{\mathbb D_{J_i}}}=(x_{N_{i-1}+1},\dots,x_{N_{i-1}+n_i})^q$.
By hypothesis, we have  $$\prod_{J\in D_{J_i}}\alpha(\mathbb D_{J})=\frac{1}{|G_{\mathbb D_{J_i}}|}\biggl(\frac{n_i}{\mathrm{lct}(\mathfrak m_{\mathbb D_{J_i}})}  \biggr)^{n_i}\ \ \mbox{and}$$
$$ q=\frac{n_i}{\mathrm{lct}(\mathfrak m_{\mathbb D_{J_i}})}=w(\{{N_{i-1}+1}\})=\cdots=w(\{N_{i-1}+n_i\})$$ for $i=1,\dots,m$.
By Lemma \ref{easy inequality}, we have $$\prod_{J\in D}\alpha(\mathbb D_J)=\frac{1}{|G_\mathbb D|}   \biggl(\frac{n}{\mathrm{lct}(\mathfrak m_\mathbb D)}  \biggr)^n.$$
Therefore the proposition holds if $\mathbb D$ is not connected.

We assume that $\mathbb D$ is  connected.
Let $J$ be the maximal element of $D$, $J_1,\dots,J_m$ be the elements of $D$ such that $J=J_1\cup\dots\cup J_m$ and
$J_i\sqsubset  J$ for $i=1,\dots,m$.
Let $a=w(J_1)$.

If $\mathrm{lct}(\mathfrak m_{{\mathbb D}})=\frac{\mathrm{lct}(\mathfrak m_{\mathbb D\setminus J})}{a}$, then $\alpha(\mathbb D)=a.$
By hypothesis, Lemma \ref{group reduction}, Proposition \ref{log canonical threshold} and Lemma \ref{easy inequality}, we have
\begin{align*}
\prod_{K\in D}\alpha(\mathbb D_K)
 &=a\prod_{K\in D_{J_1}}\alpha(\mathbb D_{K})\cdots\prod_{K\in D_{J_m}}\alpha(\mathbb D_{K})\\
&\ge a\frac{1}{|G_{\mathbb D_{J_1}}|}\biggl(\frac{n_1}{\mathrm{lct}(\mathfrak m_{\mathbb D_{J_1}})}  \biggr)^{n_1}\cdots\frac{1}{|G_{\mathbb D_{J_m}}|}   \biggl(\frac{n_m}{\mathrm{lct}(\mathfrak m_{\mathbb D_{J_m}})}  \biggr)^{n_m}\\
&\ge \frac{a}{|G_{\mathbb D\setminus J}|}\biggl(\frac{n}{\mathrm{lct}(\mathfrak m_{\mathbb D\setminus J})}\biggr)^n\\
&= \frac{1}{|G_\mathbb D|}\biggl(\frac{n}{\mathrm{lct}(\mathfrak m_\mathbb D)}\biggr)^n.
\end{align*}

If $1=\mathrm{lct}(\mathfrak m_{{\mathbb D}})>\frac{\mathrm{lct}(\mathfrak m_{\mathbb D\setminus J})}{a}$, then $\alpha(\mathbb D)=\mathrm{lct}(\mathfrak m_{\mathbb D\setminus J}).$
By hypothesis, Lemma \ref{group reduction}, Proposition \ref{log canonical threshold} and Lemma \ref{easy inequality}, we have
\begin{align*}
\prod_{K\in D}\alpha(\mathbb D_J)
 &=\mathrm{lct}(\mathfrak m_{\mathbb D\setminus J})\prod_{K\in D_{J_1}}\alpha(\mathbb D_{K})\cdots\prod_{K\in D_{J_m}}\alpha(\mathbb D_{K})\\
&\ge \mathrm{lct}(\mathfrak m_{\mathbb D\setminus J})\frac{1}{|G_{\mathbb D_{J_1}}|}\biggl(\frac{n_1}{\mathrm{lct}(\mathfrak m_{\mathbb D_{J_1}})}  \biggr)^{n_1}\cdots\frac{1}{|G_{\mathbb D_{J_m}}|}   \biggl(\frac{n_m}{\mathrm{lct}(\mathfrak m_{\mathbb D_{J_m}})}  \biggr)^{n_m}\\
&\ge \frac{\mathrm{lct}(\mathfrak m_{\mathbb D\setminus J})}{|G_{\mathbb D\setminus J}|}\biggl(\frac{n}{\mathrm{lct}(\mathfrak m_{\mathbb D\setminus J})}\biggr)^n\\
&> \frac{1}{|G_\mathbb D|}\biggl(\frac{n}{\mathrm{lct}(\mathfrak m_\mathbb D)}\biggr)^n.
\end{align*}
Therefore the inequality holds if $\mathbb D$ is  connected.

Let  $n_0=0$,   $N_{i}=n_0+\cdots+n_{i}$ for $i=0,\dots,m-1$ and $\mathbb D\setminus J=(D\setminus J,v)$.
Then we may assume that $J_i=\{N_{i-1}+1,\dots,N_{i-1}+n_i\}$ for $i=1,\dots,m$.
Let $\mathfrak a_{\mathbb D_{J_i}}\subset \mathbb C[x_{N_{i-1}+1},\dots,x_{N_{i-1}+n_i}]$ be the ideal generated by $x_K^{v(K)}$ for $K\in D_{J_i}$. 
Note that $v(K)=\frac{w(K)}{a}$.

We assume that  $$\prod_{K\in D}\alpha(\mathbb D_K)=\frac{1}{|G_\mathbb D|}   \biggl(\frac{n}{\mathrm{lct}(\mathfrak m_\mathbb D)}  \biggr)^n.$$
Then we have for $i=1,\dots,m$
$$\mathrm{lct}(\mathfrak m_{{\mathbb D}})=\frac{\mathrm{lct}(\mathfrak m_{\mathbb D\setminus J})}{a}, \prod_{K\in D_{J_i}}\alpha(\mathbb D_{K})=\frac{1}{|G_{\mathbb D_{J_i}}|}\biggl(\frac{n_i}{\mathrm{lct}(\mathfrak m_{\mathbb D_{J_i}})}  \biggr)^{n_i},$$   
$$\frac{n_1}{\mathrm{lct}(\mathfrak m_{\mathbb D_{J_1}})} =\cdots=\frac{n_m}{\mathrm{lct}(\mathfrak m_{\mathbb D_{J_m}})} $$ by the  above discussion.
By hypothesis, we have  for $i=1,\dots,m$, $$\overline{\mathfrak a_{\mathbb D_i}}=(x_{N_{i-1}+1},\dots,x_{N_{i-1}+n_i})^{w(\{N_{i-1}+1\})/a},$$ 
$$\frac{n_i}{\mathrm{lct}(\mathfrak m_{\mathbb D_{J_i}})}=\frac{w(\{{N_{i-1}+1}\})}{a}=\cdots=\frac{w(\{N_{i-1}+n_i\})}{a}$$ and  $\alpha(\mathbb D_K)=\beta(\mathbb D_K)$ for every $K\in  D_{J_i}$.
Therefore we have $\overline{\mathfrak a_{\mathbb D}}=(x_{1},\dots,x_{n})^{w(\{1\})}$, 
$$\frac{n}{\mathrm{lct}(\mathfrak m_\mathbb D)}=w(\{1\})=\cdots=w(\{n\})$$
 and $\alpha(\mathbb D_K)=\beta(\mathbb D_K)$ for every $K\in  D$.

We assume that there is a positive integer $q$ such that 
$\overline{\mathfrak a_\mathbb D}=(x_1,\dots,x_n)^q$.
Since ${\mathfrak a_\mathbb D}$ is a monomial ideal, 
$$\overline{\mathfrak a_\mathbb D}=\{x_1^{m_1}\cdots x_n^{m_n}|\ (m_1,\dots,m_n)\in \mathrm{Newt}({\mathfrak a_\mathbb D})\cap \mathbb Z^n\}$$ (see for example Proposition 1.4.6 in \cite{HS}).
Therefore we have $q=w(\{1\})=\cdots=w(\{n\})$.
We have $\overline{\mathfrak a_{\mathbb D_{J_i}}}=(x_{N_{i-1}+1},\dots,x_{N_{i-1}+n_i})^{q/a}$ since $\overline{\mathfrak a_\mathbb D}=(x_1,\dots,x_n)^q$.
By Lemma \ref{log canonical threshold lemma}, ${\mathrm{lct}(\mathfrak m_{\mathbb D})}=\frac{n}{q}$ and ${\mathrm{lct}(\mathfrak m_{\mathbb D_{J_i}})}=\frac{n_ia}{q}$.
These imply that 
$$\mathrm{lct}(\mathfrak m_{{\mathbb D}})=\frac{\mathrm{lct}(\mathfrak m_{\mathbb D\setminus J})}{a}\ \ \ \mbox{and}\ \ \frac{n_1}{\mathrm{lct}(\mathfrak m_{\mathbb D_{J_1}})} =\cdots=\frac{n_m}{\mathrm{lct}(\mathfrak m_{\mathbb D_{J_m}})}. $$ 
Therefore we have $\alpha(\mathbb D)=a$ as  $\mathrm{lct}(\mathfrak m_{{\mathbb D}})\ge 1$.
By hypothesis, we have  $$\prod_{K\in D_{J_i}}\alpha(\mathbb D_{K})=\frac{1}{|G_{\mathbb D_{J_i}}|}\biggl(\frac{n_i}{\mathrm{lct}(\mathfrak m_{\mathbb D_{J_i}})}  \biggr)^{n_i}$$  for $i=1,\dots,m$.
By Lemma \ref{group reduction} and Lemma \ref{easy inequality}, we have $$\prod_{K\in D}\alpha(\mathbb D_K)=\frac{1}{|G_\mathbb D|}   \biggl(\frac{n}{\mathrm{lct}(\mathfrak m_\mathbb D)}  \biggr)^n.$$
Therefore the proposition holds if $\mathbb D$ is  connected.

\end{proof}


\begin{thm}
Let $\mathbb D=(D,w)$ be an $n$-dimensional special datum.
Let $\mathfrak a_\mathbb D\subset \mathbb C[x_1,\dots,x_n]$ be the ideal generated by $x_J^{w(J)}$ for $J\in D$. Then
$$e(R_\mathbb D)\ge\prod_{J\in D}\alpha(\mathbb D_J)\ge \frac{1}{|G_\mathbb D|}   \biggl(\frac{n}{\mathrm{lct}(\mathfrak m_\mathbb D)}  \biggr)^n.$$
Moreover, 
\begin{enumerate}
\item[\rm{(1)}]
the equality $e(R_\mathbb D)=\prod_{J\in D}\alpha(\mathbb D_J)$ holds  if $\alpha(\mathbb D_J)=\beta(\mathbb D_J)$ for every $J\in  D$.
\item[\rm{(2)}]
The equality $\prod_{J\in D}\alpha(\mathbb D_J)=\frac{1}{|G_\mathbb D|}   \bigl(\frac{n}{\mathrm{lct}(\mathfrak m_\mathbb D)}  \bigr)^n$ holds if and only if there is a positive integer $q$ such that 
the integral closure 
$\overline{\mathfrak a_\mathbb D}$ of $\mathfrak a_\mathbb D$ is equal to $(x_1,\dots,x_n)^q$.
 Furthermore, in this case $$e(R_\mathbb D)=\prod_{J\in D}\alpha(\mathbb D_J),\ \alpha(\mathbb D_J)=\beta(\mathbb D_J)\ for\ every\ J\in  D \ and$$
$$q=\frac{n}{\mathrm{lct}(\mathfrak m_\mathbb D)}=w(\{1\})=\cdots=w(\{n\}).$$
\end{enumerate}
\end{thm}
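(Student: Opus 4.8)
The plan is to deduce this final theorem as a direct assembly of the two key propositions that have already been established, namely Proposition \ref{Key Prop 1} and Proposition \ref{Key Prop 2}, together with the explicit computation of integral closures of monomial ideals. The chain of inequalities $e(R_\mathbb D)\ge\prod_{J\in D}\alpha(\mathbb D_J)$ is exactly the content of Proposition \ref{Key Prop 1}, and $\prod_{J\in D}\alpha(\mathbb D_J)\ge \frac{1}{|G_\mathbb D|}\bigl(\frac{n}{\mathrm{lct}(\mathfrak m_\mathbb D)}\bigr)^n$ is exactly the content of Proposition \ref{Key Prop 2}; so the displayed inequality follows by concatenation. Statement (1) is literally the ``moreover'' clause of Proposition \ref{Key Prop 1}, so nothing new is needed there.

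For statement (2), I would argue as follows. Proposition \ref{Key Prop 2} already gives that $\prod_{J\in D}\alpha(\mathbb D_J)=\frac{1}{|G_\mathbb D|}\bigl(\frac{n}{\mathrm{lct}(\mathfrak m_\mathbb D)}\bigr)^n$ holds if and only if there is a positive integer $q$ with $\overline{\mathfrak a_\mathbb D}=(x_1,\dots,x_n)^q$, and that in this situation $q=\frac{n}{\mathrm{lct}(\mathfrak m_\mathbb D)}=w(\{1\})=\cdots=w(\{n\})$ and $\alpha(\mathbb D_J)=\beta(\mathbb D_J)$ for every $J\in D$. The only thing still to be checked is that, in this same case, the first inequality is also an equality, i.e. $e(R_\mathbb D)=\prod_{J\in D}\alpha(\mathbb D_J)$. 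But since $\alpha(\mathbb D_J)=\beta(\mathbb D_J)$ for every $J\in D$, this follows immediately from statement (1) (equivalently, from the ``moreover'' clause of Proposition \ref{Key Prop 1}). Assembling these observations yields statement (2) verbatim.

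I do not expect any genuine obstacle here: all the real work — the inductive arguments on $\dim R_{\mathbb D}$, the use of minimal reductions and the invariant $\sigma_R$, the concavity estimate of Lemma \ref{easy inequality}, and the characterization of the equality case via the integral closure being a power of the maximal ideal — has already been carried out in Propositions \ref{Key Prop 1} and \ref{Key Prop 2}. The proof of the theorem is therefore purely a matter of citing these two results in the right order and noting that the hypothesis ``$\overline{\mathfrak a_\mathbb D}=(x_1,\dots,x_n)^q$'' simultaneously triggers the equality condition in Proposition \ref{Key Prop 2} and (via $\alpha=\beta$ everywhere) the equality condition in Proposition \ref{Key Prop 1}. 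If anything requires a word of care, it is only the bookkeeping that the $q$ produced by Proposition \ref{Key Prop 2} is the same integer appearing in the statement and equals each $w(\{i\})$, which is recorded in that proposition's conclusion.

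$\blacksquare$
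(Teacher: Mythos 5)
Your proposal is correct and matches the paper's own proof, which likewise deduces the theorem directly from Proposition \ref{Key Prop 1} and Proposition \ref{Key Prop 2}; your extra remark that the ``$\alpha=\beta$ everywhere'' conclusion of Proposition \ref{Key Prop 2} triggers the equality clause of Proposition \ref{Key Prop 1} is exactly the implicit bookkeeping the paper leaves to the reader.
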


\begin{proof}
This theorem follows from Proposition \ref{Key Prop 1} and Proposition \ref{Key Prop 2}
\end{proof}

\noindent
{\it Acknowledgments.}
The author would like to thank Professor  Shihoko Ishii  and Professor  Shunsuke Takagi for valuable conversations.
The author is partially supported by JSPS Grant-in-Aid for Early-Career Scientists 19K14496 and the
Iwanami Fujukai Foundation.



\end{document}